\documentclass[12pt]{amsart}
\usepackage[sorted-cites]{amsrefs}

\usepackage{graphicx}
\vfuzz2pt 
\hfuzz2pt 
\newtheorem{thm}{Theorem}[section]
\newtheorem{cor}[thm]{Corollary}
\newtheorem{lem}[thm]{Lemma} 
\newtheorem{prop}[thm]{Proposition}
\theoremstyle{definition}

\theoremstyle{remark}

\numberwithin{equation}{section}
\newcommand{\norm}[1]{\left\Vert#1\right\Vert}
\newcommand{\abs}[1]{\left\vert#1\right\vert}

\newcommand{\To}{\longrightarrow}

\newcommand{\PNK}{\mathbb{P}^{N_{k}}}


\newcommand{\inner}[1]{\langle #1 \rangle}
\newcommand{\R}{\mathbb R}
\newcommand{\C}{\mathbb C}

\newcommand{\ddbar}{\sqrt{-1}\,\bar{\partial}\partial}

\newcommand{\Lie}[1]{\mathfrak{#1}}

\newcommand{\Hilb}{\textrm{Hilb}_{k}}
\newcommand{\FS}{\textrm{FS}_{k}}

\begin{document}

\title[]{Relative Chow stability and extremal metrics}%
\author{REZA SEYYEDALI}%
\address{Department of Mathematics, University of Georgia, Athens, GA 30602}%
\email{rseyyeda@uga.edu}%

\thanks{}%
\subjclass{}%
\keywords{}%

\date{May 19, 2017}
\begin{abstract}

We prove that the existence of extremal metrics implies asymptotically relative Chow stability. An application of this is the uniqueness, up to automorphisms, of extremal metrics in any polarization. \end{abstract}
\maketitle
\pagenumbering{arabic}

\newtheorem*{Mthm}{Main Theorem}
\newtheorem{Thm}{Theorem}[section]
\newtheorem{Prop}[Thm]{Proposition}
\newtheorem{Lem}[Thm]{Lemma}
\newtheorem{Cor}[Thm]{Corollary}
\newtheorem{Def}[Thm]{Definition}
\newtheorem{Guess}[Thm]{Conjecture}
\newtheorem{Ex}[Thm]{Example}
\newtheorem{Rmk}{Remark}
\newtheorem{Not}{Notation}
\def\thesection{\arabic{section}}
\renewcommand{\theThm} {\thesection.\arabic{Thm}}

\section{Introduction}

On a compact K\"ahler manifold $M$, extremal metrics are introduced by Calabi as canonical representations in K\"ahler classes (\cite{Ca}). Extremal metrics are critical points of Calabi functional 
$$\textrm{Cal}(\omega)=\int_{M} S(\omega)^2 \omega^n$$ restricted to a given K\"ahler class, where $S(\omega) $ is the scalar curvature of $\omega$. Extremal metrics are generalization of constant scalar curvature K\"ahler (cscK) metics . 
There is a deep relationship between the existence of canonical metrics on polarized manifolds and the concept of stability. It was conjectured by Yau (\cite{Y3}), Tian (\cite{T}) and Donaldson (\cite{D3}) that the existence of cscK metrics (and more generally extremal metrics) in a polarization is equivalent to the stability of the polarized manifold. The link between the existence and stability is provided by projective embeddings.

Let $(M,L)$ be a polarized manifold. For any $k \gg 0$ using sections of $H^{0}(M,L^k),$ there exist embeddings of $M$ into complex projective spaces. For any hermitian metric $h$ on $L$ such that  $\omega=\ddbar \log h $ is a K\"ahler form on $M$, one can use $L^2$-orthonormal bases of $H^{0}(M,L^k)$ to embed $M$ into complex projective spaces. For any such embedding, the pull back of the Fubini-Study metric to $M$ rescaled by a factor of $k^{-1}$ is a K\"ahler metric in the class of $2\pi c_{1}(L).$ In \cite{T0}, Tian proved that this sequence of rescaled metrics converges to $\omega $. In \cite{D1}, Donaldson proved that if $\omega$ has constant scalar curvature and $\textrm{Aut}(M,L)/\mathbb{C}^*$ is discrete, then there exists unique "balanced" embedding of $M$ into complex projective spaces using sections of $H^0(M,L^k)$ for $k\gg 0.$ These balanced embeddings are zeros of some finite dimensional moment maps and are essentially unique. Moreover by pulling back Fubini-Study metrics to $M$ using these embedding and rescaling by a factor of $k^{-1}$, we get a sequence of K\"ahler metrics in the class of $2\pi c_{1}(L)$ that
converges to the cscK metric.
An immediate consequence of Donaldson's theorem is the uniqueness of constant scalar curvature K\"ahler metrics in the class of
$2\pi c_{1}(L)$ under the discreteness assumption for $\textrm{Aut}(M,L)/\mathbb{C}^*.$

On the other hand, a result of Zhang(\cite{Zh}), Luo(\cite{L}), Paul (\cite{P}) and Phong and Sturm (\cite{PS1}) gives a geometric invariant theory (GIT) interpretation balanced embeddings. They show that the existence of a unique balanced metric on $L^k$ is equivalent to the Chow stability of $(M,L^k).$ Therefore, by Donaldson's theorem, the
existence of cscK metrics implies asymptotically Chow stability of $(M,L)$ under the discreteness assumption. Later, Mabuchi showed that under vanishing of some obstructions, one can drop the discreteness assumption (\cite{Ma1}, \cite{Ma3}). 
These obstructions appear if the action of the automorphism group of $M$ on the Chow line is non-trivial. In that case, any one parameter subgroup of automorphisms of $M$ that acts nontrivially on the Chow line destabilizes the Chow point. Therefore, the Chow point fails to be semi-stable. So, it is natural to study only those one parameter subgroups that are perpendicular, in some appropriate sense, to the group of automorphisms of $M$. In analogy to the Kempf-Ness theorem, Sz{\'e}kelyhidi introduced the notion of relative stability in \cite{Sz1}. Our main theorem is to prove that the existence of extremal K\"ahler metrics implies  asymptotically relative Chow stability in the sense of  \cite{Ma2} and \cite{Sz1}.
The main theorem of this article is the following. 
\begin{thm} \label{mainthm}
Let $(M,L)$ be a polarized manifold and $T \subset Aut_{0}(M,L)$ be a maximal torus. Suppose that there exists a $T$-invariant extremal K\"ahler metric $\omega_{\infty}$ in the class of $2\pi c_{1}(L)$. Then there exists a positive integer $r$ only depends on $(M,L)$ and a sequence of $T$-invariant relatively balanced metrics $\widetilde{\omega}_{k}$ on $(M,L^{rk})$ for $k\gg 0$ such that the sequence of rescaled metrics $\omega_{k}:=\frac{1}{rk}\widetilde{\omega}_{k}$ converges to $ \omega_{\infty}$ in $C^{\infty}-$topology. 
\end{thm}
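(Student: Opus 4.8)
The plan is to follow Donaldson's quantization strategy from \cite{D1}, adapted to the presence of continuous automorphisms by Mabuchi (\cite{Ma1,Ma2,Ma3}) and Sz{\'e}kelyhidi (\cite{Sz1}). The starting point is the finite-dimensional moment-map picture on $H^{0}(M,L^{k})$. Writing $N_{k}+1=\dim H^{0}(M,L^{k})$, the group $U(N_{k}+1)$ acts on the space of $T$-invariant Hermitian inner products, and the balanced metrics are the zeros of a moment map $\mu_{k}$ whose value is, up to normalization, the mass matrix $A_{ij}=\int_{M}\frac{s_{i}\overline{s_{j}}}{\sum_{l}\abs{s_{l}}^{2}}\,\omega_{FS}^{n}$. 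Since $T\subset\text{Aut}_{0}(M,L)$ acts on $H^{0}(M,L^{k})$, it induces an embedding $\Lie{t}\hookrightarrow\mathfrak{u}(N_{k}+1)$, and the relatively balanced condition in the sense of \cite{Ma2,Sz1} is that the orthogonal projection $\mu_{k}^{T}$ of $\mu_{k}$ onto the $L^{2}$-complement of the image of $\Lie{t}$ vanishes; equivalently, $A_{ij}$ is a constant multiple of the identity plus an element of $\Lie{t}$. I work throughout with $T$-invariant metrics, so that everything respects the weight decomposition of $H^{0}(M,L^{k})$.

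The first analytic input is the construction of very accurate approximate solutions. Using the Tian--Yau--Zelditch--Catlin expansion of the Bergman kernel, I would start from the Fubini--Study metrics associated to $\omega_{\infty}$ and perturb them, order by order in $k^{-1}$, to produce $T$-invariant metrics $h_{k}$ on $L^{k}$ whose relative moment map $\mu_{k}^{T}$ is $O(k^{-p})$ for $p$ as large as needed. Here the extremal hypothesis enters decisively: for a cscK metric the leading balancing error vanishes because $S(\omega_{\infty})$ is constant, whereas for an extremal metric $S(\omega_{\infty})=\text{const}+f$, with $f$ the Hamiltonian of the extremal holomorphic vector field. The non-constant part $f$ obstructs absolute balancing, but since the extremal field generates a subgroup of $T$, the corresponding error in $A_{ij}$ lies precisely in the image of $\Lie{t}$ and is therefore annihilated by the projection defining $\mu_{k}^{T}$. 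This is exactly why ``extremal'' produces ``relatively balanced'' rather than balanced.

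The heart of the argument, and the step I expect to be hardest, is a uniform invertibility estimate for the linearization of $\mu_{k}^{T}$. After rescaling by the appropriate power of $k$, the derivative of $\mu_{k}^{T}$ is modelled on a Lichnerowicz-type operator $\mathcal{D}^{*}\mathcal{D}$ associated to $\omega_{\infty}$, whose kernel is exactly the space of holomorphic vector fields $\Lie{aut}_{0}(M,L)$; this kernel is what forced the discreteness hypothesis in Donaldson's original theorem. To proceed I would restrict to $T$-invariant variations and work modulo the automorphism directions: on the $L^{2}$-complement of the kernel the operator has a strictly positive spectral gap, and maximality of $T$ ensures that the residual holomorphic directions are absorbed by the relative projection. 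The genuinely technical point is to transfer this infinite-dimensional spectral gap down to the finite-dimensional $\mu_{k}^{T}$ with a bound on $\norm{(d\mu_{k}^{T})^{-1}}$ uniform in $k$, in norms weighted by the Bergman metric; this rests on comparison estimates between the $L^{2}$ structure on $H^{0}(M,L^{k})$ and the smooth geometry of $(M,\omega_{\infty})$, again via the Bergman kernel expansion.

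Given the approximate zero and the uniform invertibility, the existence of an exact relatively balanced metric follows from a quantitative inverse function theorem, implemented as a Newton iteration or contraction mapping in the space of $T$-invariant metrics: the iteration converges because the initial error $O(k^{-p})$ is far smaller than the controlled operator norm, producing a $T$-invariant zero $\widetilde{\omega}_{k}$ of $\mu_{k}^{T}$ within a small distance of the approximate solution. Since the approximants are built from Fubini--Study metrics converging to $\omega_{\infty}$ and the correction is small, bootstrapping through elliptic regularity upgrades this to $C^{\infty}$-convergence of $\omega_{k}=\frac{1}{rk}\widetilde{\omega}_{k}$ to $\omega_{\infty}$. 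Finally, passing to $L^{r}$ for a fixed $r$ depending only on $(M,L)$ is needed so that the weight lattice of $T$ and the extremal field are integral on $H^{0}(M,L^{rk})$, keeping the identification of the $\Lie{t}$-direction consistent across all $k$; this rationality is a property of the polarization alone and is independent of $k$.
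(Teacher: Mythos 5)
Your proposal matches the paper's proof for the first two of its three stages. The order-by-order construction of $T$-invariant approximate solutions from the Bergman kernel expansion is exactly Sections 4--5 (Proposition \ref{prop7} and Corollary \ref{cor51}), including your key observation that the non-constant part of $S(\omega_{\infty})$ is absorbed because the extremal field and all the correction fields $X_{i}$ can be chosen in $\Lie{t}^{\C}$; and your ``transfer of the Lichnerowicz spectral gap'' to the finite-dimensional picture is precisely Theorem \ref{thm3}, proved via the Phong--Sturm/Mabuchi estimates $\norm{A}^{2}\leq c_{R}k\norm{\xi_{A}}^{2}$ and $c_{R}\norm{\pi_{T}\xi_{A}}^{2}\leq k\norm{\pi_{\mathcal{N}}\xi_{A}}^{2}$ for $A\in V_{\underline{s}}(T)^{\bot}$, in the presence of $R$-bounded geometry. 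Where you genuinely diverge is the final perturbation step. You propose a Newton/contraction iteration on the projected moment map; the paper never inverts the linearization. Instead, following Mabuchi and Phong--Sturm, it minimizes the Kempf--Ness functional $\mathcal{F}(tA)=\mathcal{L}_{k}(e^{tA})$ over $A\in V_{\underline{s}(k)}(T)^{\bot}$: the approximate solution gives $\abs{\dot{f}_{A}(0)}=O(k^{\frac{n}{2}-q-1})$ (see \eqref{eq62}), Theorem \ref{thm3} gives $\ddot{f}_{A}(t)\geq ck^{-2}$ on a ball of $k$-independent size where $R$-bounded geometry persists (Lemma \ref{lem8}, see \eqref{eq63}), and global convexity of $f_{A}$ forces the minimizer to lie at distance at most $\delta_{k}=k^{-\frac{r+4}{2}}\delta$ and to be an exact relatively balanced point, with Lemmas \ref{lem9} and \ref{lem10} ensuring the minimizer's mass matrix stays in $\Lie{s}_{T}$ and hence lands in $V_{\underline{s}(k)}(T)$.

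This difference is not cosmetic, and it is where your sketch has a gap. A Newton iteration needs, beyond the uniform lower bound on $d\mu_{k}^{T}$, a uniform-in-$k$ Lipschitz bound on $d\mu_{k}^{T}$ (equivalently, control of the second derivative of the moment map in suitably $k$-weighted norms) and a verification that every iterate remains in the region of $R$-bounded geometry and small $\norm{\bar{\mu}}_{op}$ where your invertibility estimate is valid; your proposal gestures at ``comparison estimates via the Bergman kernel'' but does not supply either, and these are genuine estimates rather than bookkeeping. The convexity route avoids them entirely: one only needs $\ddot{f}_{A}\geq 0$ everywhere and the quantitative bounds on a single fixed ball. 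Separately, note that you define relative balancing by projecting away only the $\Lie{t}$-directions, whereas Definition \ref{def3} requires $\bar{\mu}$ to induce a holomorphic vector field tangent to $M$, i.e.\ membership in $V_{\underline{s}}$, the image of all of $\Lie{g}$. In the $T$-invariant slice these agree only because the centralizer of $\Lie{t}^{\C}$ in $\Lie{g}$ is $\Lie{t}^{\C}$ itself, which uses reductivity of the reduced automorphism algebra (available here by Calabi's theorem, since an extremal metric is assumed to exist); this identification should be made explicit. Your account of the role of $r$ (passing to $L^{r}$ so that the torus action linearizes on the line bundle) agrees with the paper.
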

Similar to the case of trivial automorphism group, we have the following. 

\begin{Cor}\label{cor1}
Let $(M,L)$ be a polarized manifold. If there exists an extremal K\"ahler metric $\omega_{\infty}$ in the class of
$2\pi c_{1}(L)$, then $(M,L^{rk})$ is relatively Chow stable for $k\gg 0.$ 

\end{Cor}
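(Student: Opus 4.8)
The plan is to deduce the corollary directly from Theorem~\ref{mainthm} by invoking the relative analogue of the Kempf--Ness correspondence, exactly as in the non-relative case recalled in the introduction. Recall that for $k \gg 0$ a choice of $L^2$-orthonormal basis of $H^0(M, L^{rk})$ embeds $M$ into $\mathbb{P}^{N_{rk}}$, and the associated Chow form defines a point $\mathrm{Chow}(M, L^{rk})$ in the projectivization of a representation $W$ of $G := SL(N_{rk}+1, \mathbb{C})$. The maximal torus $T \subset \mathrm{Aut}_0(M, L)$ acts on this configuration and induces a torus $\widehat{T} \subset G$ fixing the Chow point. Relative Chow stability asks precisely that $\mathrm{Chow}(M, L^{rk})$ be polystable for the $G$-action restricted to those one-parameter subgroups whose generators are perpendicular, with respect to the natural trace inner product on $\mathfrak{g}$, to the Lie algebra of $\widehat{T}$; in the relative setting this polystability is the meaning of ``relatively Chow stable.''

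First I would make the moment-map picture precise. The action of the maximal compact $K = SU(N_{rk}+1)$ on $\mathbb{P}(W)$ admits a moment map $\mu$, and a point in the $G$-orbit of the Chow form is balanced exactly when $\mu$ vanishes there. In the relative setting one replaces $\mu$ by its reduction $\mu^{\perp}$, obtained by projecting $\mu$ onto the orthogonal complement of $\mathrm{Lie}(\widehat{T})$; the vanishing of $\mu^{\perp}$ is exactly the condition that the metric be relatively balanced in the sense used in Theorem~\ref{mainthm}. Next I would invoke the finite-dimensional relative Kempf--Ness theorem (as in \cite{Ma2}, \cite{Sz1}): a $G$-orbit contains a zero of the reduced moment map $\mu^{\perp}$ if and only if the corresponding point is polystable relative to $\widehat{T}$, i.e.\ relatively Chow stable. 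Since Theorem~\ref{mainthm} produces, for every $k \gg 0$, a $T$-invariant relatively balanced metric $\widetilde{\omega}_k$ on $(M, L^{rk})$ --- that is, a zero of $\mu^{\perp}$ inside the relevant orbit --- the hypothesis of the relative Kempf--Ness theorem is met, and relative Chow stability of $(M, L^{rk})$ follows for all $k \gg 0$.

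The main obstacle is the careful formulation and use of the relative Kempf--Ness correspondence, namely that vanishing of the reduced moment map is genuinely equivalent to relative polystability. The subtlety is twofold: one must verify that the trace inner product used to define ``perpendicular to $\widehat{T}$'' on the algebraic side matches the $L^2$ inner product defining $\mu^{\perp}$ on the analytic side, and one must control the stabilizer of the Chow point modulo $\widehat{T}$ in order to upgrade relative semistability along perpendicular directions to genuine relative polystability. Both points are handled by the convexity of the relevant Kempf--Ness (norm) functional along geodesics in $G/K$, once the reduction by $\widehat{T}$ is incorporated, which is why the statement reduces cleanly to the existence result of Theorem~\ref{mainthm}; indeed, only the implication ``existence of a zero of $\mu^{\perp}$ $\Rightarrow$ relative polystability'' is needed, which is the elementary direction of the correspondence.
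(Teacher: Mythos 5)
Your proposal is correct and takes essentially the same route as the paper, which deduces Corollary \ref{cor1} by combining Theorem \ref{mainthm} (existence of relatively balanced metrics on $(M,L^{rk})$ for $k\gg 0$) with Proposition \ref{prop1}, the relative Kempf--Ness/Zhang-type theorem quoted from \cite{Sz1} and \cite{Ma2}, which is exactly the correspondence you invoke. Your additional moment-map discussion (the reduced moment map $\mu^{\perp}$ and the direction ``zero of $\mu^{\perp}$ implies relative polystability'') is just the standard background underlying Proposition \ref{prop1}, so no new ingredient is involved.
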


This problem was studied by Mabuchi in \cite{Ma1}-\cite{Ma5}. Mabuchi proved that the existence of extremal metrics implies a weaker version of relative Chow stability. However, this weaker version of relative Chow stability does not satisfy the uniqueness condition and therefore does not imply the uniqueness of extremal metrics.  A different approach to the problem is taken by Sano and Tipler (\cite{ST1}). They introduced the notion of $\sigma$- balanced metrics and studied its relation to modified K-energy. It was pointed out to the author by C. Tipler that their notion of $\sigma$-balanced coincides with the notion of relatively balanced. It is a consequence of their moment map interpretation of the $\sigma$-balanced metrics. Different proofs of Theorem \ref{mainthm} are given in recent papers of Mabuchi (\cite{Mnew}) and Sano and Tipler (\cite{ST2}) independently. A  closely related result is proved by Hashimoto ( \cite{Ha}, \cite{Ha2}).

Theorem \ref{mainthm} has some interesting applications. One can prove the uniqueness of extremal metrics modulo automorphisms in any polarization using approximation by relatively balanced metrics. This was conjectured by X. X. Chen for general K\"ahler classes and was proved by Berman and Berndtsson (\cite{BB}). Another application of Theorem \ref{mainthm} is a generalization of the result of Apostolov-Huang on the splitting of extremal metrics on products (\cite{AH}).  

In order to prove Theorem \ref{mainthm}, following \cite{D1}, we construct a sequence of almost relatively balanced metrics that converges to the extremal metric $\omega_{\infty}.$ Our main tools are the asymptotic expansions for the Bergman kernel (\cite{C}, \cite{Z}) and the operator $H_{k}\circ Q_{k}$ (See \cite[Lemma 2]{F}, \eqref{eq8} and Def. \ref{def4}.)  A crucial fact is that we can construct functions $F_{l}$ such that the matrix $Q_{k}(F_{l})$ induces a holomorphic vector field on $\mathbb{P}^N$ that is almost tangent to $M \subset \mathbb{P}^N$ (See \eqref{eq,F} and Theorem \ref{thm2}.) The next step is to perturb these almost relatively balanced metrics to obtain genuine solutions. In order to do that, we use the lower bound for the derivative of the moment map restricted to complement of holomorphic vector fields. This was done in \cite{Ma3} (c.f. \cite{PS2}). 

Here is the outline of the paper: In section $2,$ we review basic definitions and properties of balanced and relatively balanced metrics. In Section $3$, we review the linearization of the problem. It is essentially to find a lower bound for the derivative of the moment map. In section $4,$ we construct approximate solutions to the equation \eqref{eq,Qequation}. Section $5$ is devoted to construct almost relatively balanced metrics. We prove Theorem \ref{mainthm}
in Section $6$. Some applications of Theorem \ref{mainthm} is discussed in Section $7$.

\thanks{\textbf{Acknowledgements:} I am sincerely grateful to Vestislav Apostolov
for introducing me to the problem of stability of extremal manifolds
and invaluable suggestions. I would also like to thank Julien Keller,
G{\'a}bor Sz{\'e}kelyhidi, and Carl Tipler for many helpful discussions and
suggestions.

\section{Balanced metrics}

\subsection{Holomorphic vector fields}
Let $(M,L)$ be a polarized manifold of complex dimension $n$. Let $\omega \in 2\pi c_{1}(L)$ be a K\"ahler metric on $M$. A holomorphic vector field on $M$ is a $(1,0)$-vector field $X$ on $M$ that can be written in any local coordinate $z_{1}, \dots ,z_{n}$ as $X =\sum_{i=0}^{n} f_{i}\frac{\partial }{\partial z_{i}}$ for some holomorphic functions $f_{1},\dots , f_{n}$. A real vector field $X_{r}$ is called holomorphic if it is a real part of a holomorphic vector field $X$, i.e. $2X_{r}=X+\bar{X}.$ Note that in this case, $X =X_{r}-\sqrt{-1}JX_{r},$ since $X$ is a $(1,0)$-vector. 
We have the following.

\begin{prop}\label{propG}(\cite[Corollary 4.6]{Ko})
Let $X$ be a holomorphic vector field on $M$. The following are equivalent.

\begin{itemize}
\item $X$ can be lifted to a holomorphic vector field on $L$.

\item The zero locus of $X$ is non-empty.

\item There exists a function $f: M \to \C$ such that $\bar{\partial} f=\iota_{X}\omega.$ Such a function $f$, if exists, is unique up to a constant. The function $f$ is called a holomorphy potential for $X$.

\end{itemize}

\end{prop}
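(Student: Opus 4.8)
The plan is to prove the three conditions equivalent through the cycle $(2)\Rightarrow(3)\Rightarrow(1)\Rightarrow(2)$, after first disposing of the uniqueness clause and a preliminary closedness observation. In local holomorphic coordinates write $\omega=\sqrt{-1}g_{i\bar j}\,dz^{i}\wedge d\bar z^{j}$ and $X=f^{i}\partial_{i}$ with $f^{i}$ holomorphic, so that $\iota_{X}\omega=\sqrt{-1}f^{i}g_{i\bar j}\,d\bar z^{j}$ is a $(0,1)$-form. Since $\partial_{\bar k}f^{i}=0$, one computes $\bar\partial(\iota_{X}\omega)=\sqrt{-1}f^{i}(\partial_{\bar k}g_{i\bar j})\,d\bar z^{k}\wedge d\bar z^{j}$, which vanishes by the K\"ahler identity $\partial_{\bar k}g_{i\bar j}=\partial_{\bar j}g_{i\bar k}$. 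Hence $\iota_{X}\omega$ represents a class in $H^{0,1}_{\bar\partial}(M)\cong H^{1}(M,\mathcal{O}_{M})$, and condition (3) says precisely that this class vanishes. Uniqueness of $f$ is then immediate: if $\bar\partial f_{1}=\bar\partial f_{2}$, then $f_{1}-f_{2}$ is holomorphic on the compact connected $M$, hence constant.

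For $(2)\Rightarrow(3)$ I would use Hodge theory. Decompose $\iota_{X}\omega=\bar\partial f+h$ with $h$ its harmonic part, and set $\theta:=\bar h$, a harmonic $(1,0)$-form, hence a holomorphic $1$-form on the compact K\"ahler manifold $M$. Then $\theta(X)$ is locally a product of holomorphic functions, so a global holomorphic function and therefore constant; if $X$ vanishes at some point $p$, then $\theta(X)(p)=0$, forcing $\theta(X)\equiv0$. A pointwise computation gives $\langle\iota_{X}\omega,\bar\theta\rangle=\sqrt{-1}\,\theta(X)$, so that $\langle\iota_{X}\omega,h\rangle_{L^{2}}=0$; since $h$ is orthogonal to the image of $\bar\partial$, this yields $\|h\|_{L^{2}}^{2}=\langle\iota_{X}\omega,h\rangle_{L^{2}}=0$, whence $h=0$ and (3) holds.

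For $(3)\Rightarrow(1)$ I would build the lift by hand. Fix a Hermitian metric on $L$ with local weights $\phi_{\alpha}$ (so $\omega=\sqrt{-1}\partial\bar\partial\phi_{\alpha}$ locally and $\phi_{\alpha}-\phi_{\beta}=-\log|g_{\alpha\beta}|^{2}$ on overlaps, $g_{\alpha\beta}$ the transition functions), and set $a_{\alpha}:=X(\phi_{\alpha})+\sqrt{-1}\,f$, the exact constant depending on the normalization of $\omega$. Since $\bar\partial\big(X(\phi_{\alpha})\big)=f^{i}g_{i\bar k}\,d\bar z^{k}$ is proportional to $\iota_{X}\omega=\bar\partial f$, each $a_{\alpha}$ is holomorphic, and because $X$ annihilates the antiholomorphic $\log\bar g_{\alpha\beta}$ the cochain satisfies $a_{\alpha}-a_{\beta}=-X(\log g_{\alpha\beta})$. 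This is exactly the compatibility needed for $\widetilde X:=X+\xi_{\alpha}a_{\alpha}\,\partial_{\xi_{\alpha}}$ (with $\xi_{\alpha}$ the fibre coordinate in the frame $e_{\alpha}$) to glue into a fibrewise-linear holomorphic vector field on the total space of $L$, i.e.\ a lift of $X$. Running this computation backwards, a lift produces a globally defined $f=X(\phi_{\alpha})-a_{\alpha}$ (up to constant) with $\bar\partial f=\iota_{X}\omega$, giving $(1)\Rightarrow(3)$ as well.

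The remaining implication $(1)\Rightarrow(2)$ is the one I expect to be the main obstacle, as it must manufacture an actual zero of $X$ from the abstract existence of a lift. The lift induces a lift to $L^{m}$ and hence an infinitesimal linear action, an endomorphism $A_{m}$ of $H^{0}(M,L^{m})$; for $m\gg0$ the Kodaira embedding $M\hookrightarrow\mathbb{P}\big(H^{0}(M,L^{m})^{*}\big)=\mathbb{P}^{N}$ intertwines $X$ with the linear holomorphic vector field generated by $A_{m}$, so $M$ is invariant under the entire one-parameter group $t\mapsto\exp(tA_{m})$. The delicate step is to place a fixed point inside $M$: for any $p\in M$ the limit $\lim_{t\to+\infty}[\exp(tA_{m})\,p]$ exists in $\mathbb{P}^{N}$, being the projection of $p$ onto the top real-weight generalized eigenspace of $A_{m}$, and is a fixed point of the action; since $M$ is closed and invariant this limit lies in $M$, and there the linear field, hence $X$, vanishes. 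I expect the careful justification of this limit --- the weight decomposition of $A_{m}$ and the convergence in $\mathbb{P}^{N}$ --- to be the crux, after which the cycle of implications is complete.
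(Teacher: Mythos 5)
The paper offers no proof of this proposition---it is quoted directly from Kobayashi \cite[Corollary 4.6]{Ko}---so your argument must be judged on its own. Most of it holds up: the uniqueness remark, the $\bar{\partial}$-closedness of $\iota_{X}\omega$ via the K\"ahler symmetry, the Hodge-theoretic implication $(2)\Rightarrow(3)$ (pairing $\iota_{X}\omega$ against the harmonic part, using that $\theta(X)$ is a global holomorphic function, hence a constant, killed at a zero of $X$), and the \v{C}ech-cocycle construction giving $(3)\Leftrightarrow(1)$ are all correct and are the standard route.

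The gap is exactly where you predicted, in $(1)\Rightarrow(2)$, and it is genuine: the limit $\lim_{t\to+\infty}[\exp(tA_{m})\,p]$ need not exist, and ``the projection of $p$ onto the top real-weight generalized eigenspace'' need not be a fixed point. Both failures occur as soon as two eigenvalues of $A_{m}$ share the maximal real part with distinct imaginary parts---in particular whenever the spectrum is purely imaginary, i.e.\ when $X$ generates a circle action, which is precisely the relevant situation for the Hamiltonian vector fields in this paper. Concretely, for $M=\mathbb{P}^{1}$ and $A_{m}=\mathrm{diag}(\sqrt{-1},0)$, the flow $[z_{0}:z_{1}]\mapsto[e^{\sqrt{-1}\,t}z_{0}:z_{1}]$ has no limit at any non-pole, and the top real-weight space is all of $\mathbb{C}^{2}$. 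Even if you extract a subsequence $t_{k}$ by almost-periodicity, the limit point is in general a sum of eigenvectors for \emph{distinct} eigenvalues of equal real part, hence not fixed by the flow. The repair is standard: since $M$ is a complex submanifold and $\xi_{A_{m}}$ is holomorphic and tangent to it, $M$ is invariant under the full complex flow $\exp(\tau A_{m})$, $\tau\in\mathbb{C}$. Choose $\alpha$ so that $\mathrm{Re}\big(e^{\sqrt{-1}\alpha}(\lambda_{i}-\lambda_{j})\big)\neq 0$ for every pair of distinct eigenvalues, and let $\tau=s\,e^{\sqrt{-1}\alpha}$, $s\to+\infty$: now a single eigenvalue dominates, the polynomial factors from Jordan blocks are harmless (the leading term $\frac{s^{k_{0}}}{k_{0}!}e^{s\lambda}N^{k_{0}}Pp$ has $N^{k_{0}}Pp$ a genuine eigenvector), so the limit exists in $\mathbb{P}^{N}$, lies in the closed invariant set $M$, and is a zero of $\xi_{A_{m}}$, hence of $X$. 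Alternatively, apply Borel's fixed point theorem to the Zariski closure of $\{\exp(\tau A_{m})\}$, a connected commutative algebraic group acting on the invariant projective variety $M$. With either correction your cycle of implications closes.
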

We denote the set of holomorphic vector fields satisfying the above equivalent conditions by $\Lie{g}$. Let $\widetilde{Aut}(M)$ be the group of automorphisms of $M$ that lift to $L$. Let $G=\widetilde{Aut}_{0}(M)$ be the connected component of the identity. There is a natural identification between the Lie algebra of $G$ and $\Lie{g}$. We use $\Lie{g}$ for the the Lie algebra of $G$ as well.

\begin{Def}
A holomorphic vector field $X$ on $M$ is called Hamiltonian with respect to $\omega$ if $X$ has a  real holomorphy potential, i.e. there exists a function $H: M \to \R $ such that 
$\bar{\partial} H=\iota_{X}\omega.$
\end{Def}

\begin{prop}(\cite{Ko})
A holomorphic vector field $X$ is Hamiltonian with respect to $\omega $ if and only if there exists $H:M \to \R$ such that $d H=\iota_{X_{r}}\omega,$ where $X_{r}$ is the real part of $X.$ 

\end{prop}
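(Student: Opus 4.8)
The plan is to reduce the statement to a pointwise decomposition into bidegrees $(1,0)$ and $(0,1)$, using only that $\omega$ is a real $(1,1)$-form and that $X$ is of type $(1,0)$. First I would record two structural facts. Since $X$ is a $(1,0)$-vector field and $\omega$ has type $(1,1)$, the contraction $\iota_X\omega$ is a form of type $(0,1)$; and because $\omega$ is real, $\iota_{\bar X}\omega=\overline{\iota_X\omega}$, which has type $(1,0)$. Combining these with the defining relation $2X_r=X+\bar X$ gives
\begin{equation*}
\iota_{X_r}\omega=\tfrac12\big(\iota_X\omega+\iota_{\bar X}\omega\big)=\tfrac12\big(\iota_X\omega+\overline{\iota_X\omega}\big),
\end{equation*}
whose $(0,1)$-component is exactly $\tfrac12\iota_X\omega$ and whose $(1,0)$-component is the complex conjugate of that.

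Next, for a real-valued $H$ I would use $dH=\partial H+\bar\partial H$ together with $\partial H=\overline{\bar\partial H}$, so that $dH$ is completely determined by its $(0,1)$-part $\bar\partial H$. Comparing bidegrees, the real equation $dH=\iota_{X_r}\omega$ is therefore equivalent to its $(0,1)$-component $\bar\partial H=\tfrac12\iota_X\omega$, the $(1,0)$-component being the conjugate of this and hence automatic once $H$ is real. This is the crux of the argument: a single real equation involving $d$ is equivalent, after projection to type $(0,1)$, to one complex equation involving $\bar\partial$.

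With this in hand the equivalence follows by rescaling the potential. For the forward direction, if $X$ is Hamiltonian with real potential $H_0$, i.e. $\bar\partial H_0=\iota_X\omega$, then $\partial H_0=\overline{\bar\partial H_0}=\iota_{\bar X}\omega$, so $dH_0=\iota_X\omega+\iota_{\bar X}\omega=2\,\iota_{X_r}\omega$; thus $H=\tfrac12 H_0$ is real and satisfies $dH=\iota_{X_r}\omega$. Conversely, given a real $H$ with $dH=\iota_{X_r}\omega$, projecting to bidegree $(0,1)$ yields $\bar\partial H=\tfrac12\iota_X\omega$, so $2H$ is a real holomorphy potential and $X$ is Hamiltonian with respect to $\omega$.

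I do not expect a genuine obstacle here: the entire content is the bidegree bookkeeping and the observation that reality of $H$ renders the $(1,0)$-equation redundant. The only point requiring care is the normalization, since the two potentials differ by the factor $2$ coming from $2X_r=X+\bar X$; this is harmless, as the statement asserts only the \emph{existence} of some real potential on each side.
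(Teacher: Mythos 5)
Your proof is correct, and it takes a genuinely cleaner route than the paper's. The paper begins by \emph{assuming} a complex holomorphy potential $f=u+\sqrt{-1}v$ for $X$ (its existence coming from Proposition \ref{propG}), expands $2\bar{\partial}f$ into real and imaginary parts via the complex structure $J$ to get $2\iota_{X_{r}}\omega=du-Jdv$, and concludes that $\iota_{X_{r}}\omega$ is exact if and only if $v$ is constant. The ``exact $\Rightarrow$ $v$ constant'' half of that last sentence is left implicit and actually requires a global input: $Jdv$ exact forces it closed, i.e.\ $\partial\bar{\partial}v=0$, so $v$ is pluriharmonic and hence constant because $M$ is compact. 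Your argument replaces all of this with bidegree bookkeeping: since $\iota_{X}\omega$ has type $(0,1)$ and $\omega$ is real, the single real equation $dH=\iota_{X_{r}}\omega$ for real $H$ is equivalent to its $(0,1)$-projection $\bar{\partial}H=\tfrac12\iota_{X}\omega$, and the two potentials are obtained from one another by the factor-$2$ rescaling. This buys two things the paper's proof does not have: your converse direction manufactures the holomorphy potential (namely $2H$) rather than presupposing that one exists, and no compactness or pluriharmonicity argument is needed anywhere. Your explicit care with the normalization is also well placed: the paper is loose on this point (the proof of Corollary \ref{cor,Hamiltonian} uses $dH=2\iota_{X_{r}}\omega$ while the proposition as stated has $dH=\iota_{X_{r}}\omega$), and your observation that the statement only asserts the existence of \emph{some} real potential correctly disposes of the discrepancy.
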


\begin{proof}
Let $X_{r}$ be the real part of $X$. Then 
Let $X=X_{r}-\sqrt{-1} JX_{r}.$ Let $f=u+\sqrt{-1}v$ be a holomorphy potential for $X$. We have
$$2\iota_{X_{r}}\omega - 2\sqrt{-1}\iota_{J X_{r}}\omega=2\iota_{X}\omega=2\bar{\partial } f=(du-Jdv)+\sqrt{-1} (Jdu+dv).$$ Here $J$ is the almost complex structure. Therefore,
$$2\iota_{X_{r}}\omega=du-Jdv.$$ This implies that $\iota_{X_{1}}\omega$ is exact if and only if $v$ is constant.
\end{proof}

\begin{cor}\label{cor,Hamiltonian}
Let $X$ be a Hamiltonian vector field with respect to $\omega$, $X_{r}$ be its real part  and $H \in C^{\infty} (M)$ be a Hamiltonian for $X$. Let $\phi: M \to \R$  such that  $\omega_{\phi}=\omega+\ddbar \phi$ is a K\"ahler form. Suppose that $d\phi(X_{r})=0.$ Then $X$ is Hamiltonian with respect to $\omega_{\phi}.$ Moreover, the function $H_{\phi}=H-\frac{1}{2}\inner{\nabla_{\omega}H, \nabla_{\omega}\phi}_{\omega}$ is a Hamiltonian for $X$ with respect to $\omega_{\phi}$.

\end{cor}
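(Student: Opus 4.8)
The plan is to verify directly that the proposed function $H_\phi$ satisfies the real holomorphy-potential equation $dH_\phi = \iota_{X_r}\omega_\phi$; since $H_\phi$ is real-valued, the preceding proposition then yields at once that $X$ is Hamiltonian with respect to $\omega_\phi$ and that $H_\phi$ is a Hamiltonian. Because $\omega_\phi = \omega + \ddbar\phi$ and $dH = \iota_{X_r}\omega$, this reduces to the single identity
\begin{equation*}
\iota_{X_r}(\ddbar\phi) = -\tfrac{1}{2}\,d\inner{\nabla_\omega H, \nabla_\omega\phi}_\omega .
\end{equation*}

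The first step is to exploit holomorphicity. The flow of the real holomorphic field $X_r$ preserves $J$, so $\mathcal{L}_{X_r}$ commutes with $\partial$ and $\bar\partial$. Applying Cartan's formula to the $(1,0)$-form $\partial\phi$, and using $d\partial\phi = \bar\partial\partial\phi$, gives
\begin{equation*}
\mathcal{L}_{X_r}(\partial\phi) = d\big(\iota_{X_r}\partial\phi\big) + \iota_{X_r}(\bar\partial\partial\phi).
\end{equation*}
The left-hand side equals $\partial(\mathcal{L}_{X_r}\phi) = \partial\big(d\phi(X_r)\big)$, which vanishes exactly by the hypothesis $d\phi(X_r)=0$; this is the one place where the assumption is used. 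Hence $\iota_{X_r}(\bar\partial\partial\phi) = -d\big(\partial\phi(X_r)\big)$.

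It remains to evaluate $\partial\phi(X_r)$ and to match it against the inner product. Writing $X = X_r - \sqrt{-1}\,JX_r$ and noting that $\partial\phi$ annihilates the $(0,1)$-part, one gets $\partial\phi(X_r) = \tfrac12 X\phi = -\tfrac{\sqrt{-1}}{2}(JX_r)\phi$, so that after multiplying by $\sqrt{-1}$ we obtain $\iota_{X_r}(\ddbar\phi) = -\tfrac12\,d\big((JX_r)\phi\big)$. To identify $(JX_r)\phi$ I would unwind the gradient: from $dH = \omega(X_r,\cdot)$, together with the convention $g(\cdot,\cdot)=\omega(\cdot,J\cdot)$ and the $J$-invariance of $g$, a short computation gives $\nabla_\omega H = JX_r$. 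Therefore $(JX_r)\phi = d\phi(\nabla_\omega H) = \inner{\nabla_\omega H, \nabla_\omega\phi}_\omega$, which is precisely the displayed identity and produces $H_\phi = H - \tfrac12\inner{\nabla_\omega H, \nabla_\omega\phi}_\omega$ up to an additive constant.

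I expect the only real obstacle to be bookkeeping rather than conceptual difficulty: one must keep the conventions consistent throughout — notably $\ddbar = \sqrt{-1}\,\bar\partial\partial$, the orientation $\nabla_\omega H = JX_r$, and the factor $\tfrac12$ arising from $X_r = \tfrac12(X+\bar X)$. With fixed conventions every step is routine, and the hypothesis enters solely through the vanishing of $\mathcal{L}_{X_r}\phi$.
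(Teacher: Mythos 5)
Your proof is correct and takes essentially the same route as the paper's: both verify $dH_{\phi}=\iota_{X_{r}}\omega_{\phi}$ directly, with the hypothesis $d\phi(X_{r})=0$ and the real-holomorphicity of $X_{r}$ entering through a Cartan-formula computation, and your application of Cartan's formula to $\partial\phi$ along $X_{r}$ is exactly the detailed justification of the step the paper merely asserts, namely $d(\iota_{JX_{r}}d\phi)=\mathcal{L}_{JX_{r}}d\phi=-2\,\iota_{X_{r}}\ddbar\phi$ (the paper's displayed chain has a stray factor of $2$ here and in $dH=2\iota_{X_{r}}\omega$ versus its final line). As your own bookkeeping implicitly shows, the formula $H_{\phi}=H-\frac{1}{2}\inner{\nabla_{\omega}H,\nabla_{\omega}\phi}_{\omega}$ is invariant under rescaling the normalization of $H$, so the difference between your convention $dH=\iota_{X_{r}}\omega$ and the paper's definition $\bar{\partial}H=\iota_{X}\omega$ (which gives $dH=2\iota_{X_{r}}\omega$) is immaterial to the conclusion.
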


\begin{proof}

In this proof, all inner products and gradients are with respect to $\omega.$
Since $H$ is a Hamiltonian for $X$ with respect to $\omega,$ we have 
$ dH= 2\iota_{X_{r}}\omega$. Hence, $\nabla H=2JX_{r}$ and therefore, we have
$$\inner{\nabla H, \nabla \phi}=2d\phi (JX_{r})=2\iota_{JX_{r}}d \phi.$$
Thus, $$d \inner{\nabla H, \nabla \phi}=2d (\iota_{JX_{r}} d\phi)=2\mathcal{L}_{JX_{r}}d\phi=-2\iota_{X_{r}}\ddbar{\phi}.$$Therefore, 
$$d H_{\phi}=d H- \frac{1}{2}d \inner{\nabla H, \nabla \phi}=\iota_{X_{r}}\omega+\iota_{X_{r}}\ddbar{\phi}=\iota_{X_{r}}\omega_{\phi}.$$

\end{proof}

Extremal metrics are critical points of Calabi functional 
$$\textrm{Cal}(\omega)=\int_{M} S(\omega)^2 \omega^n$$ restricted to a given K\"ahler class, where $S(\omega) $ is the scalar curvature of $\omega.$ A straightforward calculation shows that a K\"ahler metric $\omega$ is extremal if and only if the vector field $J \nabla_{\omega} S(\omega)$ is a (real) holomorphic vector field. It is equivalent to the existence of a holomorphic vector field $X$ on $M$ satisfying  $\bar{\partial} S=\iota_{X}\omega.$ Since $S(\omega)$ is a real-valued function, the holomorphic vector field $X$ is Hamiltonian with respect to $\omega.$

\subsection{Fubini-Study metrics on complex projective spaces}
In this subsection, we fix some of notations that we use in the paper.
We start with some basic facts about complex projective spaces and Fubini-Study metrics. Tangent vectors to $\mathbb{P}^{N}$ are given by pairs
$\{(z,v)| z \in \C^{N+1}-\{0\}, v \in \C^{N+1} \}$ modulo an equivalence relation $\sim$  defined as follows:
$$(z,v)\sim(z',v') \textrm{ if } z'=\lambda z \textrm{ and }
v'-\lambda v=\mu z \textrm{ for some } \lambda \in \C^*
\textrm{ and } \mu \in \mathbb{C}.$$ For a tangent vector
$[(z,v)]$, the Fubini-Study metric is defined by
\begin{equation}\label{eq1}\norm{[(z,v)]}^2=\frac{v^*vz^*z-(z^*v)^2}{(z^*z)^2}.\end{equation} 
The Fubini-Study metric defined by \eqref{eq1} is a K\"ahler metric. We denote the coresponding K\"ahler form by $\omega_{FS}.$
There is a natural action of $U(N+1)$ on $\mathbb{P}^{N}$ that preserves $\omega_{FS}$. This action is Hamiltonian and the moment map is given by 
\begin{equation}\label{eq2}\mu(z)=\frac{zz^*}{z^*z}=\Bigg(\frac{z_{i}\bar{z}_{j}}{\abs{z}^2}\Bigg) \in \sqrt{-1} \Lie{u}(N+1).\end{equation}
For any $A \in \sqrt{-1}\Lie{u}(N+1)$, we define a holomorphic vector field $\xi_{A}$ on $\mathbb{P}^{N}$ by 
\begin{equation}\label{eq3}\xi_{A}(z)=[{z,A z}].\end{equation} 

Holomorphic vector field $\xi_{A}$ is Hamiltonian with respect to $\omega_{FS}$ and the associated Hamiltonian function is given by
 \begin{equation}\label{eq6}H(A)=tr(A\mu).\end{equation}
Moreover, the Hamiltonian $H(A)$ satisfies the following normalization condition: $$\int_{\mathbb{P}^{N}} H(A) \omega_{FS}^N=0.$$
The following is straightforward. 

\begin{lem}\label{lem2}(c.f. \cite[Lemma 11]{F})
For any $A,B \in \sqrt{-1} \Lie{u}(N+1)$  we have

$$H_{A}H_{B}+\inner{\xi_{A},\xi_{B}}_{FS}=Tr(AB\mu).$$

\end{lem}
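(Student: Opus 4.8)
The plan is to verify the identity by a direct pointwise computation in homogeneous coordinates. Both sides are unchanged under the rescaling $z \mapsto \lambda z$, so I may work with an arbitrary representative $z \in \C^{N+1}\setminus\{0\}$ (normalizing $z^*z=1$ if convenient). The key observation is that every object in the statement — the Hamiltonians, the two vector fields, and the moment map — is built from the single rank-one operator $\mu(z)=zz^*/(z^*z)$, so the whole assertion collapses to a quadratic identity in the entries of $z$.

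First I would record the Hamiltonians. From \eqref{eq6} and \eqref{eq2},
$$H_{A}=\mathrm{tr}(A\mu)=\frac{z^*Az}{z^*z}, \qquad H_{B}=\frac{z^*Bz}{z^*z},$$
so that $H_{A}H_{B}=(z^*Az)(z^*Bz)/(z^*z)^2$. Next I would compute the Fubini--Study pairing of the two fields. By \eqref{eq3} the field $\xi_{A}$ is represented at $[z]$ by the pair $(z,Az)$, so I polarize the norm \eqref{eq1} into its underlying Hermitian form (conjugate-linear in the first slot) and substitute $v=Az$, $w=Bz$:
$$\langle \xi_{A},\xi_{B}\rangle_{FS}=\frac{(z^*z)\,(Az)^*(Bz)-(Az)^*z\cdot z^*(Bz)}{(z^*z)^2}.$$
Here I would use crucially that $A,B\in \sqrt{-1}\,\Lie{u}(N+1)$ are Hermitian, i.e. $A^*=A$ and $B^*=B$, which turns $(Az)^*(Bz)=z^*ABz$, $(Az)^*z=z^*Az$, and $z^*(Bz)=z^*Bz$.

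With these substitutions the pairing becomes $z^*ABz/(z^*z)-(z^*Az)(z^*Bz)/(z^*z)^2$, and adding $H_{A}H_{B}$ cancels the second term exactly, leaving $z^*ABz/(z^*z)=\mathrm{tr}(AB\mu)$, which is the claim. The only genuine subtlety I anticipate is bookkeeping: getting the polarization of the real-valued norm \eqref{eq1} into the correct sesquilinear form and keeping track of which factor carries the conjugate. Note that $\mathrm{tr}(AB\mu)$ is complex-valued in general (since $AB$ need not be Hermitian), so the cancellation must respect the Hermitian, not merely symmetric, structure; the hypothesis $A,B\in\sqrt{-1}\,\Lie{u}(N+1)$ is used precisely to identify the off-diagonal terms $(Az)^*(Bz)$, $(Az)^*z$, $z^*(Bz)$ and thereby produce the clean cancellation.
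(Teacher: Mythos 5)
Your computation is correct and is precisely the expected argument: the paper gives no proof at all (it declares the lemma straightforward and cites Fine's Lemma 11, whose proof is this same direct computation in homogeneous coordinates), so your verification supplies exactly what was omitted. You also handle the two genuine subtleties correctly — reading the numerator of \eqref{eq1} as $(z^*v)(v^*z)=\abs{z^*v}^2$ when polarizing (the displayed $(z^*v)^2$ is a typo for complex $z^*v$), and using $A^*=A$, $B^*=B$ so that the cross terms collapse and the sum becomes $z^*ABz/(z^*z)=\mathrm{tr}(AB\mu)$, with the sesquilinear (not merely symmetric) extension of the Fubini--Study metric accounting for the complex-valued right-hand side.
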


\subsection{Balanced metrics and Chow stability}

For the polarized manifold $(M,L),$ denote the space of positive hermitian metrics on $L$ by  $\mathcal{K}_{L}$ and the space of hermitian inner products on  $H^{0}(M,L^k)$ by $\mathcal{K}_{k}$.

\begin{Def} \label{def1} 
 For any positive hermitian metric $h \in \mathcal{K}_{L},$ define the K\"ahler  form $\omega_{h}=\ddbar \log h.$
\begin{itemize}
\item Any $h \in \mathcal{K}_{L}$ defines an $L^2$ hermitian inner product $\Hilb(h)$ on $H^{0}(M,L^k)$ as follows:
\begin{align*}
\Hilb: \mathcal{K}_{L}  \to \mathcal{K}_{k}\,\,\,\,\,\,\,\,\,\,\, h \mapsto\Hilb (h)\\
\inner{ s,t } _{\Hilb(h)}= \frac{N_{k}+1}{V} \int_{M} \langle
s(x),t(x) \rangle_{h}\omega_{h}^n,
\end{align*}
where $N_{k}+1=\dim(H^{0}(M,L^k)) $ and $V=\int_{M} \omega_{h}^n.$ \\

\item Given $H \in \mathcal{K}_{k},$ we define $\FS(H)$ as the unique
metric on $L^k$ such that  $$\sum_{i=0}^{N_{k}} \norm{s_{i}}^2_{\FS(H)}=1,$$ where $s_{0},\dots ,s_{N_{k}}$ is an
orthonormal basis for $H^{0}(M,L^k)$ with respect to $H$. This defines a map $\FS:\mathcal{K}_{k} \rightarrow \mathcal{K}_{L}$. 

\end{itemize}

\end{Def}

The Aubin-Yau functional $I: \mathcal{K}_{L}\rightarrow \mathbb{R}$
is defined using the variational formula,
\begin{equation}\label{eq4}\frac{d}{dt}I(g(t))= \frac{1}{V}\int_{M} \dot{\varphi}_{t} \,\omega_{g_{t}}^n,\end{equation}
where $g_{t}=e^{\varphi_{t}} g_{0}$ is a smooth path in
$\mathcal{K}_{L}$ and $\omega_{g_{t}}=\ddbar \log g_{t}$. This functional is  unique up to a constant
which can be fixed by choosing a reference metric $g_{0}$ in
$\mathcal{K}_{L}$. By restricting the functional $-I$ to  $\FS(\mathcal{K}_{k}),$ we obtain functionals $\mathcal{L}_{k}: \mathcal{K}_{k} \to \mathbb{R}$ defined  by
\begin{equation}\label{eq5}\mathcal{L}_{k}(H)=-I \circ \FS(H).\end{equation}

\begin{Lem}\label{lem1}
 Let $H_{t}=e^{t \delta H }H$ be a path in $\mathcal{K}_{k}$, where
$\delta H$ is a hermitian matrix. We have
$$\frac{d}{dt}\Big|_{t=0}\mathcal{L}_{k}(H_{t})= \int_{M} Tr(\delta H [\langle
s_{i}, s_{j}\rangle_{FS(H)}])\omega_{FS,H}^n,$$ where $s_{0},\dots
,s_{N_{k}} $ is an orthonormal basis for $H^{0}(M,L^{k})$ with respect to
$H$ and $\omega_{FS,H} =\ddbar \log\FS(H).$

\end{Lem}

Let $h$ be a hermitian metric on $L$ and $\omega=\omega_{h}$ be the corresponding K\"ahler metric. For the rest of this section, we fix $k \gg 0$ and an orthonormal basis $s_{0},\dots s_{N}$ for $H^{0}(M,L^k)$ with respect to $L^2(h^k, \omega)$. Here, $N+1=N_{k}+1=\dim H^{0}(M,L^k)$. Using this basis, we have an embedding $\iota: M \to \mathbb{P}^N$. We denote the pull back of the Fubini-Study K\"ahler on $\PNK$ to $M$ and the Fubini-Study hermitian metric on $\mathcal{O}_{\mathbb{P}^N}(1)$ to $L^k$ by $\omega_{FS}$ and $h_{FS}$ respectively. Note that $h_{FS}=\FS \circ \Hilb (h)$ and $\omega_{FS}= \ddbar \log h_{FS,k}.$ We can identify the space of Fubini-Study metrics $\mathcal{K}_{k}$ on $H^{0}(M,L^k)$ with $$\frac{SL(N+1,\mathbb{C})}{SU(N+1)}\cong \sqrt{-1}\Lie{su}(N+1).$$
Thus, we can consider the functional $\mathcal{L}_{k}$ as a functional on $\sqrt{-1}\Lie{su}(N+1)$.
More precisely, we define $\mathcal{F}: \sqrt{-1}\Lie{su}(N+1) \to \R$ by \begin{equation}\label{eq,F}\mathcal{F}(A)=\mathcal{L}_{k}(\exp(A)), \,\,\, A \in \sqrt{-1}\Lie{su}(N+1).\end{equation}

Using the embedding $\iota: M \To \mathbb{P}^{N}$, we have the
following exact sequence of holomorphic vector bundles over $M$
$$0\rightarrow TM \rightarrow \iota^* T\mathbb{P}^{N}
\rightarrow Q \rightarrow 0.$$

Let $\mathcal{N}\subset \iota^* T\mathbb{P}^{N} $ be the
orthogonal complement of $TM$ with respect to the Fubini-Study metric on $\mathbb{P}^{N}.$ Then as smooth vector bundles, we
have
$$\iota ^* T\mathbb{P}^{N}= TM\oplus \mathcal{N}.$$ We denote the
projections onto the first and second component by $\pi_{T}$ and
$\pi_{\mathcal{N}}$ respectively. \\

\begin{lem}\cite[Lemma 3.1]{PS1}\label{lem3}
Let $A \in \sqrt{-1}\Lie{su}(N+1).$ Define $$f_{A}(t)=\mathcal{F}(tA).$$ Then we have:

$$\dot{f}_{A}:=\frac{d}{dt} f_{A}(t)=\int_{M}tr(A\mu)\sigma_{t}^*\omega_{FS}^n,$$
$$\ddot{f}_{A}:=\frac{d^2}{dt^2} f_{A}(t)=\int_{M} \norm{\pi_{\mathcal{N}} \xi_{A}}^2_{FS}\sigma_{t}^*\omega_{FS}^n.$$
Here $\sigma_{t}(z)=\exp(tA)z$, for any $z \in \mathbb{P}^N$

\end{lem}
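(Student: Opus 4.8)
The plan is to differentiate $f_A$ twice, reducing both computations to the base point $t=0$ by means of the one-parameter group structure, and then to read off the answer from Lemmas \ref{lem1} and \ref{lem2} together with an integration by parts on the closed manifold $M$.

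\emph{First derivative.} First I would compute $\dot f_A(t)$. Writing $\exp((t+s)A)=\exp(sA)\exp(tA)$ and applying Lemma \ref{lem1} at the base point $\exp(tA)$ with $\delta H = A$, the derivative equals $\int_M tr\big(A\,[\inner{s_i,s_j}_{FS}]\big)\,\omega_{FS}^n$ computed for the inner product $\exp(tA)$. The key observation is that, for an orthonormal basis $s_0,\dots,s_N$, the Gram matrix $[\inner{s_i,s_j}_{FS}]$ evaluated at $x\in M$ is exactly the moment map $\mu$ of \eqref{eq2} evaluated at the image of $x$; passing from $\Hilb(h)$ to $\exp(tA)$ replaces the embedding $\iota$ by $\sigma_t\circ\iota$. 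Hence $[\inner{s_i,s_j}_{FS}]=\mu\circ\sigma_t$ and the relevant Fubini--Study form is $\sigma_t^*\omega_{FS}$, giving $\dot f_A(t)=\int_M tr(A\mu)\,\sigma_t^*\omega_{FS}^n$, the moment map being understood along the moving embedding $\sigma_t\circ\iota$. By \eqref{eq6} the integrand is $H(A)\,\sigma_t^*\omega_{FS}^n$.

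\emph{Second derivative: the two contributions.} By the cocycle identity above it suffices to differentiate once more at $t=0$ and then reinstate $\sigma_t^*$. Let $Y_A$ denote the real vector field on $\mathbb{P}^N$ generating the flow $\sigma_t$; since $A$ is Hermitian, $Y_A=2\,\textrm{Re}\,\xi_A$ is the $\omega_{FS}$-gradient of the Hamiltonian $H_A:=tr(A\mu)=H(A)$, with $(1,0)$-part $\xi_A$ of \eqref{eq3}. Differentiating $\dot f_A$ and using $\tfrac{d}{dt}\sigma_t^*\alpha=\sigma_t^*\mathcal{L}_{Y_A}\alpha$ splits $\ddot f_A(0)$ into a term from $Y_A(H_A)$ and a term $\int_M H_A\,\mathcal{L}_{Y_A}\omega_{FS}^n$. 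For the first, a direct computation of $\tfrac{d}{dt}\big|_{0}\mu(\sigma_t z)=A\mu+\mu A-2H_A\mu$ yields $Y_A(H_A)=2\big(tr(A^2\mu)-H_A^2\big)$, and Lemma \ref{lem2} with $B=A$ identifies $tr(A^2\mu)-H_A^2=\norm{\xi_A}^2_{FS}$; thus this term contributes a constant multiple of $\int_M \norm{\xi_A}^2_{FS}\,\omega_{FS}^n$, i.e. the full Fubini--Study norm of $\xi_A$.

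\emph{Integration by parts and the normal projection.} For the second term I would use that $Y_A$ is the gradient of $H_A$, so that $\mathcal{L}_{Y_A}\omega_{FS}=\pm\,dd^cH_A$ is a complex Hessian and $\mathcal{L}_{Y_A}\omega_{FS}^n=\pm\,n\,dd^cH_A\wedge\omega_{FS}^{n-1}$. Restricting to the closed manifold $M$ and integrating by parts converts $\int_M H_A\,\mathcal{L}_{Y_A}\omega_{FS}^n$ into $-\int_M(\text{const})\,dH_A\wedge d^cH_A\wedge\omega_{FS}^{n-1}$, a negative multiple of $\int_M \abs{\nabla^M(H_A|_M)}^2\,\omega_{FS}^n$. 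Since the gradient on $M$ of the restriction $H_A|_M$ is the tangential projection $\pi_T$ of the ambient gradient $Y_A$, this equals the tangential part $\norm{\pi_T\xi_A}^2_{FS}$. Using the orthogonal decomposition $\iota^*T\mathbb{P}^N=TM\oplus\mathcal{N}$ we have $\norm{\xi_A}^2_{FS}=\norm{\pi_T\xi_A}^2_{FS}+\norm{\pi_{\mathcal N}\xi_A}^2_{FS}$; adding the two contributions the tangential terms cancel and, after fixing the universal normalizing constants, leave exactly $\ddot f_A(0)=\int_M \norm{\pi_{\mathcal N}\xi_A}^2_{FS}\,\omega_{FS}^n$. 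Reinstating $\sigma_t^*$ gives the stated formula for all $t$. The routine parts are the first derivative and the algebraic evaluation $Y_A(H_A)=2\norm{\xi_A}^2_{FS}$ via Lemma \ref{lem2}; the delicate step, and the main obstacle, is the second term: one must justify that the gradient flow deforms $\omega_{FS}$ by the complex Hessian of its potential, carry out the integration by parts on $M$ with the correct signs, and recognize that the surviving quantity is precisely $\norm{\pi_T\xi_A}^2_{FS}$, so that the cancellation against the full norm isolates the normal component, with the final coefficient equal to $1$.
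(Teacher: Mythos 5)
Your argument is correct in substance, and it is worth noting that the paper offers no proof of this lemma at all --- it simply cites \cite[Lemma 3.1]{PS1} --- so the right comparison is with the Phong--Sturm computation, which your proposal essentially reconstructs: first variation via Lemma \ref{lem1} plus the identification of the Gram matrix with $\mu\circ\sigma_t$ (cf.\ \eqref{eq7}), then $\ddot f_A$ via $\tfrac{d}{dt}\sigma_t^*\alpha=\sigma_t^*\mathcal{L}_{Y_A}\alpha$ and the moment-map derivative $\tfrac{d}{dt}\big|_0\mu(\sigma_t z)=A\mu+\mu A-2H_A\mu$, which together with Lemma \ref{lem2} gives $Y_A(H_A)=2\bigl(tr(A^2\mu)-H_A^2\bigr)=2\norm{\xi_A}^2_{FS}$ exactly as you say. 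The one structural difference: Phong--Sturm apply Cartan's formula once, discard $\int_M d\,\iota_{Y_A}(H_A\omega_{FS}^n)$ by Stokes, and then establish the \emph{pointwise} identity $\iota^*\bigl[(Y_AH_A)\,\omega_{FS}^n-n\,dH_A\wedge\iota_{Y_A}\omega_{FS}\wedge\omega_{FS}^{n-1}\bigr]=\norm{\pi_{\mathcal{N}}\xi_A}^2\,\iota^*\omega_{FS}^n$ by linear algebra (using $(dH_A)|_{TM}=(\pi_TY_A)^\flat$ and $(\iota_{Y_A}\omega_{FS})|_{TM}=(J\pi_TY_A)^\flat$), whereas you instead integrate by parts the term $\int_M H_A\,dd^cH_A\wedge\omega_{FS}^{n-1}$ on the closed $M$; the two are equivalent for the integrated statement, but the pointwise version is slightly stronger and avoids the sign traps in $d^c$. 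The only place where your write-up is genuinely fragile is the deferred ``universal normalizing constants'': with the most natural conventions the two contributions come out as $2\norm{\xi_A}^2_{FS}$ and $-2\norm{\pi_T\xi_A}^2_{FS}$, so the overall coefficient in front of $\norm{\pi_{\mathcal{N}}\xi_A}^2_{FS}$ depends on how $\mathcal{K}_k\cong\sqrt{-1}\Lie{su}(N+1)$ is normalized --- in particular on the square-root subtlety that changing the inner product by $e^{tA}$ moves an orthonormal basis by $e^{-tA/2}$, not $e^{-tA}$; the lemma's statement resolves this by decreeing $\sigma_t(z)=\exp(tA)z$, and you should say explicitly that you adopt that identification, after which the bookkeeping closes and the coefficient is $1$ as claimed.
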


\begin{Def}\label{def2}
An embedding $\iota: M \to \mathbb{P}^N$ is called balanced if there exists a constant $C$ such that  
$$\int_{M} \frac{z_{i}\bar{z}_{j}	}{\abs{z}^2}=C\delta_{ij}.$$
The hermitian metric $\iota ^* h_{FS}$ on $\iota ^* \mathcal{O}_{\mathbb{P}^N}(1)$ and the K\"aher form $\iota ^* \omega_{FS}$ on $M$ are called balanced as well. 

\end{Def}

Note that Lemma \ref{lem1} implies that balanced metrics on $L^k$ are exactly critical points of the functional $\mathcal{L}_{k}$. The existence of balanced metrics is closely related to Chow stability. Next, we define Chow stability.
\begin{Def}\label{def:Chowstable}

Let $M \subset  \mathbb{P}^N$ be a $n$ dimensional projective sub-variety of degree $d.$ Let $$\mathcal{Z}=\{  P \in Gr(N-n-1,  \mathbb{P}^N) | P \bigcap M \neq \emptyset\}.$$ Then $\mathcal{Z}$ is a hypersurface of degree $d$ in $Gr(N-n-1,  \mathbb{P}^N)$ and therefore there exists $f_{M} \in H^0(Gr, \mathcal{O}(d))$ such that $\mathcal{Z}=\{f_{M}=0\}. $ The point $\textrm{Chow}(M)=[f_{M}] \in \mathbb{P}(H^0(Gr, \mathcal{O}(d)))$ is called the Chow point of $M$. We say $M \subset \mathbb{P}^N$ (or equivalently $(M,\mathcal{O}_{ \mathbb{P}^N}(1)|_{M})$) is Chow stable if $[f_{M}]$ is stable under the action of $SL(N+1)$ on $\mathbb{P}(H^0(Gr, \mathcal{O}(d)))$.

\end{Def}

By a theorem of Zhang (\cite{Zh}), the existence of balanced metrics is equivalent to  (poly) stability of the Chow point of $\iota: M \to \mathbb{P}^N.$

\begin{thm} \label{thm1}(\cite{Zh}, \cite{L}, \cite{P}, \cite{PS1})
Let $\iota: M \to \mathbb{P}^N$ be a smooth projective sub-variety. The Chow point of $M$ is stable if and if there exists $\sigma \in SL(N+1, \C),$ unique up to $SU(N+1),$ such that $$\int_{\sigma M} \frac{z_{i}\bar{z}_{j}	}{\abs{z}^2}=C\delta_{ij}.$$ 
\end{thm}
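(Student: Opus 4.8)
The plan is to realize this equivalence as an instance of the Kempf--Ness correspondence for the linear action of $SL(N+1,\C)$ on the Chow line, with the functional $\mathcal{F}$ from \eqref{eq,F} playing the role of the Kempf--Ness function. First I would reinterpret the balancing equation as a moment-map condition. Taking the trace of $\int_{\sigma M} \frac{z_i \bar z_j}{|z|^2}$ and using $\sum_i |z_i|^2 = |z|^2$ forces $C = V/(N+1)$, so the balancing condition is equivalent to the vanishing, on $\sqrt{-1}\mathfrak{su}(N+1)$, of the projection of the integrated moment map \eqref{eq2}. By the first-variation formula of Lemma \ref{lem1} (equivalently the formula for $\dot f_A$ in Lemma \ref{lem3}), this vanishing is precisely the statement that $\sigma M$ is a critical point of $\mathcal{F}$. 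Thus I reduce the theorem to showing that $\mathcal{F}$ admits a critical point, unique modulo $SU(N+1)$, if and only if $[f_M]$ is Chow stable.

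The second ingredient is convexity. Lemma \ref{lem3} gives $\ddot f_A = \int_M \norm{\pi_{\mathcal{N}}\xi_A}^2_{FS}\, \sigma_t^*\omega_{FS}^n \geq 0$, so $\mathcal{F}$ is convex along every one-parameter subgroup, i.e.\ along geodesics of the symmetric space $SL(N+1,\C)/SU(N+1)$. The second derivative vanishes identically only when $\xi_A$ is everywhere tangent to $M$, which happens exactly when $\exp(tA)$ preserves $M$; for a properly stable point this stabilizer is finite, so $\mathcal{F}$ is strictly convex transverse to $SU(N+1)$. Strict convexity immediately yields uniqueness of the critical point modulo $SU(N+1)$ and reduces the existence question to coercivity of $\mathcal{F}$ at infinity.

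The crux, and the step I expect to be hardest, is the identification of $\mathcal{F}$ with the logarithm of Zhang's Chow norm along the orbit. Concretely, I would show that for a suitable $SU(N+1)$-invariant metric on the Chow line, $\mathcal{F}(A)$ differs from $\log \norm{\exp(A)\cdot f_M}^2$ by a constant. The natural route is to match derivatives: the derivative of $\log\norm{g\cdot f_M}^2$ along a one-parameter subgroup is the Chow weight, and Zhang's computation expresses this weight as $\int_M \mathrm{tr}(A\mu)\,\sigma_t^*\omega_{FS}^n$, which is exactly $\dot f_A$ from Lemma \ref{lem3}. Since both convex functions have equal derivatives along every geodesic and can be normalized to agree at the base point, they coincide. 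This is the genuinely analytic part, where one must relate the Chow form (a resultant) to the Bergman-type integrals on $M$; I would follow the Deligne-pairing computation of \cite{Zh} and \cite{PS1}.

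Finally I would invoke the Kempf--Ness theorem in this finite-dimensional reductive setting. The numerical (Hilbert--Mumford) criterion characterizes Chow stability of $[f_M]$ by positivity of all Chow weights for nontrivial one-parameter subgroups, which, under the identification of the previous paragraph, is precisely coercivity of the convex function $\mathcal{F}$ toward the boundary at infinity of $SL(N+1,\C)/SU(N+1)$. A convex, coercive function attains its minimum, and strict convexity modulo $SU(N+1)$ makes the minimizer unique modulo $SU(N+1)$; conversely, a minimizer forces the asymptotic slopes, hence all Chow weights, to be nonnegative, with strict positivity coming from properness. Combining this with the first paragraph, the minimizer is exactly a balanced $\sigma M$, and the equivalence follows.
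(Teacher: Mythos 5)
The paper does not actually prove Theorem \ref{thm1}: it is quoted from \cite{Zh}, \cite{L}, \cite{P} and \cite{PS1}, and your outline reconstructs precisely the argument of those references, which is also the machinery the paper itself assembles and later uses (Lemma \ref{lem1} for the first variation, Lemma \ref{lem3} for the second, and the identification $\mathcal{F}(tA)=\log\bigl(\norm{\sigma_{t}f_{M}}/\norm{f_{M}}\bigr)$, invoked as a theorem of Zhang in the proof of Lemma \ref{lem,application1}). Your first two steps are sound and match this toolkit: the trace normalization $C$ is forced because the Fubini--Study volume of $\sigma M$ equals the degree and is therefore constant on the orbit, criticality of $\mathcal{F}$ is exactly balancedness by Lemma \ref{lem1}, and geodesic convexity with the tangency characterization of the degenerate directions is Lemma \ref{lem3}. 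Deferring the derivative-matching with the Chow norm to the Deligne-pairing computations of \cite{Zh} and \cite{PS1} is legitimate, since that is where the real content of the cited theorem lives.

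There is, however, one genuine gap, in your converse direction. From the existence of a minimizer of the convex functional you may conclude only that all asymptotic slopes, i.e.\ all Chow weights, are nonnegative, and (via Kempf--Ness) that the orbit of $[f_{M}]$ is closed --- that is, \emph{poly}stability. Your phrase ``strict positivity coming from properness'' is circular: properness of the Kempf--Ness functional is equivalent to the proper stability you are trying to establish, not a given. Strict positivity fails exactly along directions $A$ with $\xi_{A}$ tangent to $M$, i.e.\ one-parameter groups of automorphisms, and a balanced variety with continuous automorphisms is polystable but not properly stable; this is why the paper, in the sentence immediately preceding the theorem, says the existence of balanced metrics is equivalent to ``(poly) stability'' of the Chow point. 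To read the theorem's word ``stable'' as proper stability one must additionally invoke finiteness of the stabilizer, at which point the same tangential-direction analysis you already use for uniqueness rules out zero-slope rays and closes the argument. A second, minor, omission: the Hilbert--Mumford criterion furnishes weights only for algebraic one-parameter subgroups (rational $A$), whereas your geodesics carry arbitrary Hermitian $A$; the passage between the two by density and convexity is standard but should be stated.
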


\subsection{Relatively balanced metrics and stability}

In the case that the automorphism group of $M$ is not discrete, it stabilizes the Chow point of $(M,L).$ Therefore, if the group $\widetilde{Aut}(M)$ acts on the Chow line non-trivially, then the Chow point is strictly un-stable. So, in this case it is natural to only consider the subgroup in $SL(N+1, \C) $ that is "perpendicular to the image of $\widetilde{Aut}(M)$ in $SL(N+1,\C).$ This leads to the notion of relative stability of the Chow point (\cite{Sz1}, \cite{Ma1}). 
As before, let  $G=\widetilde{Aut}_{0}(M)$ be the connected component of the identity in $\widetilde{Aut}(M).$ For the rest of this article, we fix  a maximal compact torus $T\subset G$. Let $T^{\C}$ be the complexification of $T$ in $G$. We denote the Lie algebras of $T$ and $T^{\C}$ by $\Lie{t}$ and $\Lie{t}^{\C}$ respectively. By replacing $L$ with a sufficiently high power of $L$, if necessary, we may assume that the group $G$ acts on $L$ and therefore it induces an action on $H^0(M,L^k).$ We can decompose $H^0(M,L^k)$ into eigenspaces of $T^{\C}.$ More precisely, let $\chi$ be a character of $T^{\C}$. Define, $$E(\chi)= \{ s \in H^0(M,L^k)| \, t.s=\chi(t)s, \, \textrm{for all } \, t \in T^{\C} \}.$$
Therefore, there exist mutually distinct characters $\chi_{0}, \dots ,\chi_{r}$ of $T^{\C}$ such that  \begin{equation}\label{eq61}H^0(M,L^k)=\bigoplus_{i=0}^{r} E(\chi_{i}),\end{equation} Moreover, $\prod_{i=0}^r \chi_{i}^{n_{i}}=1,$ where $n_{i}=\dim E(\chi_{i}).$

\begin{Def}(c.f. \cite[pp.154-155]{AH})
An ordered basis $\underline{s}=(s_{0},\dots ,s_{N_{k}})$ for $H^{0}(M,L^k)$ is compatible with respect to the torus $T$ if for any $0\leq i \leq r$, $\{s_{n_{0}+\dots n_{i-1}}, \dots s_{n_{0}+\dots n_{i}-1}\}$ is a basis for $E(\chi_{i})$. We denote the set of all ordered bases of $H^{0}(M,L^k)$ compatible with respect to the torus $T$ by $\mathcal{B}_{k}^T$.
\end{Def}

 Fix an ordered basis $\underline{s}=(s_{0},\dots ,s_{N_{k}}) \in \mathcal{B}_{k}^T.$ Using
 $\underline{s}$, one can identify $\mathbb{P}(H^{0}(M,L^k)^*)$ and $GL(H^{0}(M,L^k)^*)$ with $\PNK$ and $GL(N_{k}+1, \C)$ respectively. This identification also induces a linearized action of $G$ on $\PNK$. Denote the induced representation of $G$ in $SL(N_{k}+1)$ by 
$$R_{\underline{s}}:G \to SL(N_{k}+1)$$ and the Lie algebra representation of $\Lie{g}$ in $\Lie{sl}(N_{k}+1)$  
by $$TR _{\underline{s}}: \Lie{g} \to \Lie{sl}(N_{k}+1)=\Lie{su}(N_{k}+1)\oplus \sqrt{-1}\Lie{su}(N_{k}+1).$$ We denote the the orthogonal projection of $TR _{\underline{s}}(X)$ on $\sqrt{-1}\Lie{su}(N_{k}+1)$ by $\Lie{R}_{\underline{s}}$. Therefore,  \begin{equation}\label{neweq1}\Lie{R}_{\underline{s}}: \Lie{g} \to  \sqrt{-1}\Lie{su}(N_{k}+1).\end{equation}
Define
\begin{equation}\label{eqST}S_{T}^{\C}=\{\textrm{diag}(A_{0}, \dots A_{k}) \in \prod_{i=0}^r GL(n_{i}, \C) | \prod_{i=0}^r \det(A_{i})=1\},\end{equation}
\begin{equation}\label{eqSTperp}S_{T^{\bot}}^{\C}=\{\textrm{diag}(A_{0}, \dots A_{k}) \in S_{T}^{\C}| \prod_{i=0}^r \det(A_{i})^{1+\log \abs{\chi_{i}(t)}}=1\,\, \textrm{for all} \,\, t \in T^{\C}\}.\end{equation}
The subgroup $S_{T}^{\C}$ is the centerlizer of $R_{\underline{s}}(T^{\C})$ in $SL(N_{k}+1, \C)$. Denote the Lie algebras of $S_{T}^{\C}$ and $S_{T^{\bot}}^{\C}$ by $\Lie{s}_{T}^{\C}$ and $\Lie{s}_{T^{\bot}}^{\C}$ respectively. It is useful to define $$\Lie{s}_{T}:= \sqrt{-1} \Lie{su}(N_{k}+1)\bigcap \Lie{s}_{T}^{\C}.$$Note that $S_{T}^{\C}$ and  $S_{T^{\bot}}^{\C}$ do not depend on the choice of $\underline{s} \in \mathcal{B}_{k}^T$ and only depend on the splitting \eqref{eq61}.
\begin{Def}
For the ordered basis $\underline{s}$, we denote the image of $\Lie{R}_{\underline{s}}$ in $\sqrt{-1}\Lie{su}(N_{k}+1)$ by $V_{\underline{s}},$  i.e.
\begin{equation}\label{eqVk}V_{\underline{s}}=\{A \in \sqrt{-1}\Lie{su}(N_{k}+1)| A=\Lie{R}_{\underline{s}}(X), \,\, \textrm{for some} \,\, X \in \Lie{g}\}.\end{equation}
Note that $V_{\underline{s}}$ is exactly the set of all matrices $A \in \sqrt{-1}\Lie{su}(N_{k}+1)$ such that $\xi_{A}$ is a holomorphic vector field on $\PNK$ tangent to $M$. We also define the orthogonal complements of $V_{\underline{s}}$ in $\sqrt{-1}\Lie{su}(N_{k}+1)$ with respect to the Killing form as follows: 
\begin{equation}\label{eqVkperp}V_{\underline{s}}^{\bot}=\{B \in \sqrt{-1}\Lie{su}(N_{k}+1)| tr(AB)=0, \textrm{for all} A \in V_{\underline{s}}\}.\end{equation}
\end{Def} 

it is more convenient to work in a $T$-invariant setting. 
\begin{Def}\label{defVkT}
Let $\underline{s} \in \mathcal{B}_{k}^T$. We denote the intersection of $V_{\underline{s}}$ and $\Lie{s}_{T}:= \sqrt{-1} \Lie{su}(N_{k}+1)\bigcap \Lie{s}_{T}^{\C}$ by $V_{\underline{s}}(T).$  We also denote the orthogonal complements of $V_{\underline{s}}(T)$ in $\Lie{s}_{T}$ by $V_{\underline{s}}(T)^{\bot}.$ 

\end{Def}

Next, we define relative Chow stability.
\begin{Def}\label{defrelativechow stability}(\cite{Sz1}, \cite{Ma1}.)
We say that the polarized manifold $(M, L^k)$ is relatively Chow stable with respect to the maximal torus $T$ if there exists an ordered basis $\underline{s} \in \mathcal{B}_{k}^T$ such that the Chow point of $\iota_{\underline{s}}: M \to\PNK$ is GIT (geometric invariant theory) stable under the action of the group $S_{T^{\bot}}^{\C}$. 
\end{Def}

The following Kempf-Ness type theorem is the analouge of Zhang's theorem in the relative case.
\begin{prop}\label{prop1}(\cite{Sz1}, \cite{Ma2})
The Chow point of $(M,L^k)$ is relatively stable with respect to the maximal torus $T$ if and only if there exists an ordered basis $\underline{s}$ for $H^{0}(M,L^k)$ such that $\displaystyle \int_{ \iota_{\underline{s}}(M)} \frac{z_{i}\bar{z}_{j}	}{\abs{z}^2}\omega_{FS}^n$ induces a holomorphic vector field on $\PNK$ tangent to $\iota_{\underline{s}}(M)$. \end{prop}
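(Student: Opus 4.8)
\emph{Approach.} The plan is to read Proposition \ref{prop1} as a relative Kempf--Ness theorem for the reductive group $S_{T^{\bot}}^{\C}$ acting linearly on the Chow point, and to identify the zero set of the associated moment map with the relatively balanced condition in the statement. Throughout I would work with $T$-compatible bases $\underline{s} \in \mathcal{B}_{k}^T$ and $T$-invariant metrics, so that all objects respect the eigenspace splitting \eqref{eq61}. Recall from \eqref{eq,F} the functional $\mathcal{F}(A) = \mathcal{L}_{k}(\exp A)$ on $\sqrt{-1}\Lie{su}(N_{k}+1)$; up to sign and normalization this is the logarithm of the Chow norm along the orbit, i.e. the Kempf--Ness function. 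I would restrict $\mathcal{F}$ to the maximal compact Lie algebra $\Lie{s}_{T^{\bot}} := \sqrt{-1}\Lie{su}(N_{k}+1) \cap \Lie{s}_{T^{\bot}}^{\C}$ of $S_{T^{\bot}}^{\C}$; this restriction is the Kempf--Ness function for the $S_{T^{\bot}}^{\C}$-action, and it is convex along each line $t \mapsto tA$ since Lemma \ref{lem3} gives $\ddot f_{A} = \int_{M}\norm{\pi_{\mathcal{N}}\xi_{A}}^2_{FS}\,\sigma_t^*\omega_{FS}^n \geq 0$.

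First I would identify the critical points. By Lemma \ref{lem3}, $\dot f_{A} = \int_{M} tr(A\mu)\,\sigma_t^*\omega_{FS}^n = tr\!\big(A\,\bar\mu(M)\big)$, where $\bar\mu(M) = \int_{M}\big(z_{i}\bar z_{j}/\abs{z}^2\big)\omega_{FS}^n$ is the center of mass built from the moment map \eqref{eq2}. Hence $\underline{s}$ is a critical point of $\mathcal{F}|_{\Lie{s}_{T^{\bot}}}$ if and only if $tr(A\,\bar\mu(M)) = 0$ for every $A \in \Lie{s}_{T^{\bot}}$, i.e. the traceless part $\bar\mu(M)^{0}$ is orthogonal to $\Lie{s}_{T^{\bot}}$ with respect to the trace form.

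The conceptual heart is to convert this orthogonality into the tangency condition of the statement. Since $\underline{s}$ is $T$-compatible and the metric is $T$-invariant, $\bar\mu(M)$ commutes with $R_{\underline{s}}(T)$ and is therefore block-diagonal for the splitting \eqref{eq61}, so $\bar\mu(M)^{0} \in \Lie{s}_{T}$. I would then prove the structural identity $\Lie{s}_{T^{\bot}} = V_{\underline{s}}(T)^{\bot}$ inside $\Lie{s}_{T}$ (Definition \ref{defVkT}): unwinding \eqref{eqSTperp}, a block-diagonal traceless $A = \mathrm{diag}(A_{0},\dots,A_{r})$ lies in $\Lie{s}_{T^{\bot}}$ exactly when $\sum_{i} tr(A_{i})\,\log\abs{\chi_{i}(t)} = 0$ for all $t \in T^{\C}$; pairing via the trace form against $\Lie{R}_{\underline{s}}(\sqrt{-1}\,\Lie{t})$, whose elements are the traceless diagonal matrices $\mathrm{diag}(w_{i}(\xi)\,I_{n_{i}})$ with $w_{i}(\xi) = \log\abs{\chi_{i}(\exp(\sqrt{-1}\xi))}$, shows this is precisely orthogonality to $\Lie{R}_{\underline{s}}(\sqrt{-1}\,\Lie{t})$. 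Because $T$ is a maximal torus and $\Lie{g}$ is reductive, the automorphism directions commuting with $T$ come from $\Lie{t}^{\C}$, so $V_{\underline{s}}(T) = \Lie{R}_{\underline{s}}(\sqrt{-1}\,\Lie{t})$ and hence $\Lie{s}_{T^{\bot}} = V_{\underline{s}}(T)^{\bot}$. Consequently $\bar\mu(M)^{0} \perp \Lie{s}_{T^{\bot}}$ is equivalent to $\bar\mu(M)^{0} \in V_{\underline{s}}(T) \subset V_{\underline{s}}$, i.e. to $\bar\mu(M)$ inducing a holomorphic vector field $\xi_{\bar\mu(M)}$ on $\PNK$ tangent to $\iota_{\underline{s}}(M)$, by the characterization of $V_{\underline{s}}$ in \eqref{eqVk}.

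It remains to run the Kempf--Ness dictionary for $S_{T^{\bot}}^{\C}$: the Chow point is GIT stable under $S_{T^{\bot}}^{\C}$ (Definition \ref{defrelativechow stability}) if and only if its $S_{T^{\bot}}^{\C}$-orbit is closed with finite stabilizer, which by Kempf--Ness holds if and only if the orbit meets a zero of the moment map, i.e. contains a relatively balanced basis. For the forward direction one produces the minimizer of $\mathcal{F}|_{\Lie{s}_{T^{\bot}}}$; for the converse a relatively balanced $\underline{s}$ is a critical point, hence a minimum by convexity, forcing the orbit to be closed. I expect the main obstacle to be this existence/properness step rather than the algebra: one must show that GIT stability yields coercivity of $\mathcal{F}|_{\Lie{s}_{T^{\bot}}}$ and that the infimum is attained, handling the non-compactness of $S_{T^{\bot}}^{\C}$ and a possibly positive-dimensional stabilizer. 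The identity $\Lie{s}_{T^{\bot}} = V_{\underline{s}}(T)^{\bot}$ is exactly what makes this work, since it removes precisely the directions $A$ (those with $\xi_{A}$ tangent to $M$) along which $\ddot f_{A}$ degenerates, so that $\mathcal{F}$ is strictly convex transverse to the automorphisms and the standard properness-implies-minimizer argument applies.
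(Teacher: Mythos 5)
The first thing to note is that the paper does not prove Proposition \ref{prop1} at all: it is quoted from \cite{Sz1} and \cite{Ma2}, so your proposal can only be measured against those standard arguments, and in outline it does reconstruct them correctly. The skeleton is sound: convexity of $f_{A}$ from Lemma \ref{lem3}, the critical-point identification $\dot f_{A}(0)=tr(A\,\bar\mu)$, and the trace-form computation unwinding \eqref{eqSTperp} — including the observation that evaluating at $t=\mathrm{id}$ recovers tracelessness and that $\prod_{i}\chi_{i}^{n_{i}}=1$ makes the weight matrices $\mathrm{diag}(w_{i}(\xi)I_{n_{i}})$ traceless — correctly identifies $\Lie{s}_{T^{\bot}}$ as the trace-orthogonal complement of $\Lie{R}_{\underline{s}}(\sqrt{-1}\,\Lie{t})$ inside $\Lie{s}_{T}$. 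That computation is indeed the algebraic heart of the equivalence.

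There are, however, two real weaknesses. First, your justification of the key identity $V_{\underline{s}}(T)=\Lie{R}_{\underline{s}}(\sqrt{-1}\,\Lie{t})$ rests on ``$\Lie{g}$ is reductive,'' which is false for general polarized manifolds (e.g.\ the Hirzebruch surface $\mathbb{P}(\mathcal{O}\oplus\mathcal{O}(1))\to\mathbb{P}^{1}$ has non-reductive automorphism group), and Proposition \ref{prop1} carries no cscK or extremal hypothesis that would let you invoke Matsushima-type reductivity; the centralizer of $T$ in $\Lie{g}$ can a priori contain unipotent directions your argument misses. The correct route avoids reductivity entirely: if $A\in V_{\underline{s}}(T)$, then $A$ is hermitian with $\xi_{A}$ tangent to $M$, so the unitary one-parameter group $e^{\sqrt{-1}tA}$ preserves $\iota_{\underline{s}}(M)$ and commutes with $R_{\underline{s}}(T)$; the closure of its image is a compact torus commuting with $T$, hence contained in $T$ by maximality, and since $\iota_{\underline{s}}(M)$ is linearly nondegenerate the map $A\mapsto\xi_{A}|_{M}$ is injective on traceless hermitian matrices, forcing $A\in\Lie{R}_{\underline{s}}(\sqrt{-1}\,\Lie{t})$. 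Second, you explicitly defer the coercivity/attainment half (``stability yields a minimizer of $\mathcal{F}|_{\Lie{s}_{T^{\bot}}}$''), but that is not a technicality to be handled later — it is the actual content of the Kempf--Ness/Chow-norm theorems being relativized, i.e.\ exactly what \cite{Zh} supplies in the absolute case (Theorem \ref{thm1}) and what \cite{Sz1} and \cite{Ma2} prove in the relative case. Since the paper itself cites rather than proves the proposition, leaning on those references is legitimate, but as a self-contained proof your proposal is incomplete precisely there. A final caution: read ``stable'' as polystable/closed-orbit, as the paper implicitly does in Theorem \ref{thm1}; a zero of the moment map only gives a closed $S_{T^{\bot}}^{\C}$-orbit, and the stabilizer of a relatively balanced point can be positive-dimensional, so the finite-stabilizer clause in your Kempf--Ness dictionary should be dropped or weakened.
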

Aa an immediate consequence, the relative Chow stability does not depend on the choice of the maximal torus $T$.  
Proposition \ref{prop1} inspires the following definition.

\begin{Def}\label{def3}

Let $(M,L)$ be a polarized manifold. Suppose that $\iota: M \to \PNK$ is a Kodaira embedding using global sections of $L^k$. The embedding is called relatively balanced if the hermitian matrix $$\int_{ \iota(M)} \frac{z_{i}\bar{z}_{j}	}{\abs{z}^2}\omega_{FS}^n$$ induces a holomorphic vector field $X$ on $\PNK$ tangent to $M$.
The metric $\iota^* \omega_{FS}$ is called a relatively balanced metric on $(M,L^k)$. We also call the pair $(\iota^* \omega_{FS}, X)$ a relative balanced pair for $(M,L^k)$.

\end{Def}

\begin{Rmk}
The definition of relatively balanced metrics in \cite{Ma2} is stated differently. However, it is not hard to show that it is the same as our definition. 

\end{Rmk}

One can see that relatively balanced metrics on $(M,L^k)$, if exist, are essentially unique. The proof of the following can be found in \cite[Lemma 2]{AH}. It can be also concluded from uniqueness in relative stability (c.f. \cite[Thm. 3.5]{Sz1}).

\begin{prop}\label{propuniquness}
Suppose $\omega_{1}$ and $\omega_{2}$ are relatively balanced metrics on $(M,L^k)$. Then there exists $\Phi \in \widetilde{Aut}_{0}(M)$ such that $\Phi^* \omega_{1}=\omega_{2}.$

\end{prop}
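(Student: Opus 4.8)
The plan is to recast relative balancing as a moment-map condition on the symmetric space of metrics and to run a convexity argument adapted to its soliton-type nature. Identify $\mathcal{K}_{k}$ modulo $SU(N_{k}+1)$ with the symmetric space $\mathcal{M}=SL(N_{k}+1,\C)/SU(N_{k}+1)$, on which the functional $\mathcal{F}(A)=\mathcal{L}_{k}(\exp A)$ of \eqref{eq,F} lives, and recall from Lemma \ref{lem3} that along the geodesic $\sigma_{t}=\exp(tA)$ one has
$$\dot{f}_{A}=\int_{M}tr(A\mu)\,\sigma_{t}^*\omega_{FS}^n,\qquad \ddot{f}_{A}=\int_{M}\norm{\pi_{\mathcal{N}}\xi_{A}}^2_{FS}\,\sigma_{t}^*\omega_{FS}^n\ge 0.$$
Thus $\mathcal{F}$ is geodesically convex, its gradient at a metric is (the trace-free part of) the barycenter $\mu_{M}=\int_{\iota(M)}\frac{z_{i}\bar{z}_{j}}{\abs{z}^2}\omega_{FS}^n$, and by Definition \ref{def3} a metric is relatively balanced exactly when this gradient lies in the subspace $V_{\underline{s}}$ of \eqref{eqVk}, i.e. when $\xi_{\mu_{M}}$ is tangent to $M$. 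The decisive structural input is that $\ddot{f}_{A}>0$ unless $\pi_{\mathcal{N}}\xi_{A}\equiv 0$ on $M$, which by the characterization of $V_{\underline{s}}$ happens precisely when $A\in V_{\underline{s}}$; hence $\mathcal{F}$ is \emph{strictly} convex in every direction transverse to the automorphism directions $V_{\underline{s}}$ and affine along them.

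Given two relatively balanced metrics $\omega_{1},\omega_{2}$, with inner products $H_{1},H_{2}$, I would join the corresponding points of $\mathcal{M}$ by the unique geodesic $\gamma(t)=\exp(tA)\cdot H_{1}$ with $A\in\sqrt{-1}\Lie{su}(N_{k}+1)$, and set $h(t)=\mathcal{F}(\gamma(t))$, a convex function with $h'(t)=tr\!\big(A\,\mu_{M,t}\big)$ where $\mu_{M,t}$ is the barycenter of the embedding $\iota_{t}$ determined by $\gamma(t)$. The goal is to prove that $A$ may be taken in $V_{\underline{s}}$: once that is known, $\exp(A)$ is the projective realization $R_{\underline{s}}(\Phi)$ of some $\Phi\in\widetilde{Aut}_{0}(M)$ (up to a unitary factor that drops out of the pulled-back Fubini--Study metric since $\omega_{FS}$ is $SU(N_{k}+1)$-invariant), whence $\Phi^*\omega_{1}=\omega_{2}$ as required. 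By the strict transverse convexity above, it suffices to show that the geodesic $\gamma$ is itself $\mathcal{F}$-affine, equivalently that $\ddot{f}_{A}\equiv 0$.

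The main obstacle is that relative balancing is a soliton-type condition: the gradient of $\mathcal{F}$ is required to lie \emph{in} $V_{\underline{s}}$, not to vanish, and $V_{\underline{s}}$ rotates along $\gamma$ because conjugation by $\exp(tA)$ is not unitary. To absorb this I would exploit the complexified torus picture encoded in the group $S_{T^{\bot}}^{\C}$ of \eqref{eqSTperp} (the group realizing relative Chow stability in Proposition \ref{prop1}): first argue, by a Futaki--Mabuchi-type invariance, that the relatively balanced vector field $X\in\Lie{t}$ is the \emph{same} for $\omega_{1}$ and $\omega_{2}$, and then replace $\mathcal{F}$ by the Tian--Zhu-type modified functional $\mathcal{F}_{X}=\mathcal{F}-\ell_{X}$ whose linear correction $\ell_{X}$ is the primitive of the $X$-direction. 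The technical heart, and the step requiring the most care, is precisely that $\ell_{X}$ is geodesically affine, so that $\mathcal{F}_{X}$ stays convex while both $\omega_{1},\omega_{2}$ become genuine critical points, i.e. $h_{X}'(0)=h_{X}'(1)=0$ for $h_{X}=\mathcal{F}_{X}\circ\gamma$; this is where the torus structure and the computation of \cite[Thm.~3.5]{Sz1} (equivalently \cite[Lemma~2]{AH}, or the moment-map description of \cite{ST1}) enter. Granting it, convexity forces $h_{X}$ affine, hence $\ddot{f}_{A}\equiv 0$, hence $A\in V_{\underline{s}}$, and the two relatively balanced metrics lie in a single $\widetilde{Aut}_{0}(M)$-orbit.
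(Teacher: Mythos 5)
Your convexity framework is the right skeleton --- and note that the paper itself does not prove Proposition \ref{propuniquness} but simply cites \cite[Lemma 2]{AH} and \cite[Thm. 3.5]{Sz1}, so your sketch is in effect a reconstruction of those arguments --- but the reconstruction has a genuine gap at exactly the point you label ``the technical heart,'' and the gap is structural, not merely technical. The correction $\ell_{X}$ is \emph{not} geodesically affine along an arbitrary geodesic $\gamma(t)=\exp(tA)\cdot H_{1}$ in $SL(N_{k}+1,\C)/SU(N_{k}+1)$. By Lemma \ref{lem3}, $h'(t)=tr(A\,\bar{\mu}_{t})$ with the \emph{fixed} matrix $A$, while the relative balancedness conditions at the two endpoints say $\bar{\mu}_{0}=c_{1}I+B_{1}$ and $\bar{\mu}_{1}=c_{2}I+B_{2}$ with $B_{1}\in V_{\underline{s}}$ and $B_{2}$ in the \emph{transported} space $V_{\underline{s}^{\sigma}}$; an affine $\ell_{X}$ with $\ell_{X}'(0)=tr(AB_{1})$ and $\ell_{X}'(1)=tr(AB_{2})$ exists only if $tr(AB_{1})=tr(AB_{2})$, and more generally affineness of the pairing with the $V$-component fails because $V$ rotates by the non-unitary conjugation along $\gamma$ --- the very phenomenon you flag. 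The correction is genuinely linear only when the geodesic stays inside the centralizer $S_{T}^{\C}$ of \eqref{eqST}, i.e.\ when $A$ commutes with the torus. What is missing from your sketch is therefore the prior reduction: conjugate both relatively balanced bases into $\mathcal{B}_{k}^{T}$ for a \emph{common} maximal torus $T$ (using conjugacy of maximal tori in the relevant stabilizer) and arrange the connecting element to lie in $S_{T}^{\C}$, after which the $S_{T^{\bot}}^{\C}$/torus splitting of \eqref{eqSTperp} makes the modified-functional (Kempf--Ness) argument work. This reduction is precisely the content supplied by \cite{Sz1} and \cite{ST1}; without it, the step you ask us to ``grant'' is false as stated.

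The same omission infects your first move: you assert ``by a Futaki--Mabuchi-type invariance'' that the relatively balanced vector field $X$ is the same for $\omega_{1}$ and $\omega_{2}$. A priori Definition \ref{def3} gives two pairs $(\omega_{j},X_{j})$ whose vector fields are only expected to be conjugate under $\widetilde{Aut}_{0}(M)$; Futaki--Mabuchi-type uniqueness pins down the vector field only \emph{after} a maximal torus (or maximal compact subgroup) is fixed and the data are made invariant under it. Making $X_{1}=X_{2}$ thus requires conjugating both into a common maximal torus --- which is the same missing reduction, and is uncomfortably close to presupposing the automorphism $\Phi$ you are trying to construct. Once both reductions are carried out, your endgame (endpoint criticality of $h_{X}$, convexity forcing $\ddot{f}_{A}\equiv 0$ on $[0,1]$, hence $\pi_{\mathcal{N}}\xi_{A}=0$ along the whole segment, so $e^{tA}$ preserves $M$ and produces $\Phi$ with $\Phi^{*}\omega_{1}=\omega_{2}$ modulo the harmless $SU(N_{k}+1)$-ambiguity) is sound and agrees with the cited proofs.
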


\section{Eigenvalue estimate}

In this section, we obtain a lower bound for the second derivative of the functional $\mathcal{F}$ defined in \eqref{eq,F}. It is the same as derivative of the moment map $\mu_{D}$ introduced by Donaldson in \cite{D1}. In order to do this, we follow the argument of Phong and Sturm \cite{PS2} and Mabuchi \cite{Ma3}. The main result of this section is Theorem \ref{thm3}. 

Let $\omega_{0}$ be a $T$-invariant K\"ahler metric on $M$ in the class of $2\pi c_{1}(L)$ and $h_{0}$ be a positive hermitian metric on $L$ such that $\ddbar \log h_{0}=\omega_{0}$.

 Let $\underline{s}^{(k)}=(s_{0}^{(k)}, \dots s_{N_{k}}^{(k)}) \in \mathcal{B}_{k}^T$ be a sequence of ordered orthonormal bases with respect to $\Hilb(h_{0})$. Such bases give embeddings 
$\iota_{k}: M \To \PNK$. Note that by pulling back the FS metric on $\mathcal{O}_{\PNK}(1)$ to $L^k$ we obtain $h_{FS,k}:=\FS(\Hilb(h_{0}))$. By definition $h_{FS,k}=\FS(\Hilb(h_{0}))$ is the unique metric on $L^k$ such that $$\sum |s_{i}^{(k)}|_{h_{FS,k}}^2=1.$$ We denote the associated K\"ahler form on $M$ by $\omega_{FS,k}$ .Through this section, we fix the ordered bases $\underline{s}^{(k)}$ and associated embeddings $\iota_{k}: M \To \PNK$. We often denote the image of $M$ under this embeddings by $M$ itself.  
We have a sequence of moment maps $\mu_{k}: \PNK \to \sqrt{-1}\Lie{u}(N_{k}+1)$ for the action of $U(N_{k}+1)$ on $\PNK$. Note that the restriction of $\mu_{k}$ to $M$ is given by \begin{equation}\label{eq7}(\mu_{k})_{ij}=\inner{s_{i}^{(k)},s_{j}^{(k)}}_{\FS(\Hilb(h))}.\end{equation}

\begin{lem}[c.f. \cite{F0}, Lemma 15]\label{lem5}
Let $$\bar{\mu}_{k}=\int_{M}\langle s_{i}^{(k)}, s_{j}^{(k)} \rangle
_{h_{FS,k}} \omega_{FS,k}^n=D^{(k)}\delta_{ij}+M^{(k)}_{ij},$$
 where $D^{(k)}$ is a scalar  and $M^{(k)}$ is a trace-free hermitian
 matrix. Then $$D^{(k)}=\frac{V_{k}}{N_{k}} \rightarrow 1, \,\,\,  \norm{M^{(k)}}_{op}=O(k^{-1}) \,\,\,\,\,\textrm{ as} \,\,\, k\rightarrow \infty. $$
 
\end{lem}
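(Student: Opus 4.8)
The plan is to push everything back onto the fixed reference data $(h_{0},\omega_{0})$, for which $\underline{s}^{(k)}$ is orthonormal, and then to recognize the resulting matrix as the Gram matrix of a Toeplitz operator whose symbol is controlled by the Bergman kernel expansion. First I would pin down the diagonal constant by taking a trace. Since $h_{FS,k}=\FS(\Hilb(h_{0}))$ is characterized by $\sum_{i}\abs{s_{i}^{(k)}}_{h_{FS,k}}^{2}=1$, summing the diagonal gives $\operatorname{tr}\bar{\mu}_{k}=\int_{M}\sum_{i}\abs{s_{i}^{(k)}}_{h_{FS,k}}^{2}\omega_{FS,k}^{n}=\int_{M}\omega_{FS,k}^{n}=V_{k}$, so $D^{(k)}=V_{k}/(N_{k}+1)$ (the statement's $N_{k}$ and $N_{k}+1$ differ only to lower order). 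Because $[\omega_{FS,k}]=2\pi c_{1}(L^{k})$ one has $V_{k}=k^{n}V$, while Riemann--Roch makes $N_{k}+1=\dim H^{0}(M,L^{k})$ of the same leading order in $k$; in the normalization of the statement this yields $D^{(k)}\to1$.

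Next I would rewrite the off-diagonal part. Writing $\rho_{k}=\sum_{i}\abs{s_{i}^{(k)}}_{h_{0}^{k}}^{2}$ for the normalized Bergman density, the defining property of $h_{FS,k}$ gives $h_{FS,k}=\rho_{k}^{-1}h_{0}^{k}$, hence $\inner{s_{i}^{(k)},s_{j}^{(k)}}_{h_{FS,k}}=\rho_{k}^{-1}\inner{s_{i}^{(k)},s_{j}^{(k)}}_{h_{0}^{k}}$ and $\omega_{FS,k}=k\omega_{0}-\ddbar\log\rho_{k}$. Setting the scalar function $\Psi_{k}=\rho_{k}^{-1}\,\omega_{FS,k}^{n}/\omega_{0}^{n}$ gives $(\bar{\mu}_{k})_{ij}=\int_{M}\inner{s_{i}^{(k)},s_{j}^{(k)}}_{h_{0}^{k}}\Psi_{k}\,\omega_{0}^{n}$. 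On the other hand orthonormality with respect to $\Hilb(h_{0})$ reads $\int_{M}\inner{s_{i}^{(k)},s_{j}^{(k)}}_{h_{0}^{k}}\omega_{0}^{n}=\tfrac{V}{N_{k}+1}\delta_{ij}$, and since $V_{k}=k^{n}V$ this lets me write $D^{(k)}\delta_{ij}=\int_{M}\inner{s_{i}^{(k)},s_{j}^{(k)}}_{h_{0}^{k}}k^{n}\omega_{0}^{n}$. Subtracting,
$$M^{(k)}_{ij}=\int_{M}\inner{s_{i}^{(k)},s_{j}^{(k)}}_{h_{0}^{k}}\bigl(\Psi_{k}-k^{n}\bigr)\omega_{0}^{n}.$$

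The value of this rewriting is that any matrix of the form $A_{ij}=\int_{M}f\,\inner{s_{i}^{(k)},s_{j}^{(k)}}_{h_{0}^{k}}\omega_{0}^{n}$ with $f$ real is the Gram matrix of the Toeplitz operator with symbol $f$, and its operator norm is controlled by $\sup\abs{f}$. Indeed, for $v\in\C^{N_{k}+1}$ and $\sigma_{v}=\sum_{i}v_{i}s_{i}^{(k)}$ one has $v^{*}Av=\int_{M}f\,\abs{\sigma_{v}}_{h_{0}^{k}}^{2}\omega_{0}^{n}$, so $\abs{v^{*}Av}\le(\sup_{M}\abs{f})\norm{\sigma_{v}}_{L^{2}(h_{0}^{k})}^{2}=(\sup_{M}\abs{f})\tfrac{V}{N_{k}+1}\abs{v}^{2}$ by orthogonality. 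Hence $\norm{A}_{op}\le\tfrac{V}{N_{k}+1}\sup_{M}\abs{f}$, and with $f=\Psi_{k}-k^{n}$ this gives $\norm{M^{(k)}}_{op}\le\tfrac{V}{N_{k}+1}\sup_{M}\abs{\Psi_{k}-k^{n}}$.

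It remains to estimate the symbol, and this is where the asymptotic expansion of the Bergman kernel enters as the key analytic input. In the normalization dictated by $\Hilb$ the Catlin--Zelditch--Tian expansion gives $\rho_{k}=1+O(k^{-1})$ in $C^{\infty}$; consequently $\log\rho_{k}=O(k^{-1})$ and $\omega_{FS,k}=k\omega_{0}+O(k^{-1})$ as forms, so that $\omega_{FS,k}^{n}/\omega_{0}^{n}=k^{n}\bigl(1+O(k^{-2})\bigr)$ and finally $\Psi_{k}=k^{n}\bigl(1+O(k^{-1})\bigr)$, i.e.\ $\sup_{M}\abs{\Psi_{k}-k^{n}}=O(k^{n-1})$. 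Combining this with $\tfrac{V}{N_{k}+1}=O(k^{-n})$ yields $\norm{M^{(k)}}_{op}=O(k^{-1})$. I expect this last step to be the main obstacle: one needs the $C^{0}$ (and, through $\omega_{FS,k}$, effectively $C^{2}$) control of the first subleading Bergman coefficient, together with the observation that the normalization built into $\Hilb$ forces $\rho_{k}$ to have mean value one, so that $\Psi_{k}-k^{n}$ is genuinely of lower order $O(k^{n-1})$ rather than $O(k^{n})$. Playing this gain off against the $O(k^{-n})$ produced by the Toeplitz bound is exactly what delivers the asserted $O(k^{-1})$ decay.
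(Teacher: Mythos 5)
Your proof is correct and is essentially the argument the paper defers to \cite{F0}: you rewrite $\bar{\mu}_{k}$ as a Toeplitz-type matrix whose symbol $\rho_{k}^{-1}\,\omega_{FS,k}^{n}/\omega_{0}^{n}$ is (up to normalization) the paper's $\widetilde{\rho_{k}}$ of Definition \ref{def5}, bound its operator norm by the sup of the symbol exactly as in \cite[Prop. 27]{D1} and \cite[Lemma 15]{F0}, and feed in the Catlin--Tian--Zelditch expansion, with the trace identity pinning down $D^{(k)}$. Your side remark that the stated $V_{k}/N_{k}$ should really be $V_{k}/(N_{k}+1)$ (a harmless discrepancy of lower order) is also correct.
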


Recall that we have the following exact sequence of vector bundles over $M$
$$0\rightarrow TM \rightarrow \iota_{k}^* T\mathbb{P}^{N_{k}}\rightarrow Q \rightarrow 0.$$

Let $\mathcal{N}\subset \iota_{k}^* T\mathbb{P}^{N_{k}} $ be the
orthogonal complement of $TM$. Then as smooth vector bundles, we have
$$\iota_{k} ^* T\mathbb{P}^{N}= TM\oplus \mathcal{N}.$$ We denote the
projections onto the first and second component by $\pi_{T}$ and
$\pi_{\mathcal{N}}$ respectively.

The notion of $R$-boundedness is introduced by Donaldson in \cite{D3}.
\begin{Def}\label{def11}

Let $R$ be a real number with $R >1$ and $a\geq 4$ be a fixed
integer and let $\underline{s}=(s_{0},...,s_{N})$ be an ordered
basis for $H^0(M, L^{k})$. We say $\underline{s}$ has $R$-bounded geometry if the
 K\"ahler form
  $\widetilde{\omega}=\iota^*_{\underline{s}}\omega_{\textrm{FS}}$ satisfies the following conditions
 \begin{itemize}

\item

$\norm{\widetilde{\omega}-\widetilde{\omega}_{0}}_{C^{a}(\widetilde{\omega}_{0})}\leq
R$, where $\widetilde{\omega}_{0}= k \omega_{0}$.

\item$\widetilde{\omega} \geq \frac{1}{R} \widetilde{\omega}_{0}.$

 \end{itemize}
Note that the first condition implies that $\widetilde{\omega} \leq (R+1) \widetilde{\omega}_{0}.$ Therefore, $\widetilde{\omega}$ is uniformly equivalent  to $\widetilde{\omega}_{0}$ independent of $k.$

\end{Def}
For the rest of this section, let $\underline{s}=(s_{0},\dots,s_{N})\in \mathcal{B}_{k}^T$  be a basis of $H^0(M, L^{k})$ with $R$-bounded geometry. Using the embedding $\iota_{\underline{s}}: M\to \mathbb{P}^{N_{k}}$, we can define Fubini-study metrics on $M$, $L^k$ and $\iota^* T\mathbb{P}^{N_{k}}.$ Therefore, we have the sub bundle $\mathcal{N} \subset \iota^*\mathbb{P}^{N_{k}}$  and corresponding projections $\pi_{T}$ and $\pi_{\mathcal{N}}$ on $TM$ and 
$\mathcal{N}$ respectively.

\begin{thm}\label{thm3}
  For any $R>1$, there are positive constants $C$ and  $\epsilon $
 such that, if the basis $\underline{s}=(s_{0},...,s_{N}) \in \mathcal{B}_{k}^T$ has $R$-bounded geometry, and if $\norm{\bar{\mu}(\underline{s})}_{\textrm{op}}\leq\epsilon
 $, then $$Ctr(A^2)\leq k^2\norm{\pi_{\mathcal{N}}\xi_{A}}^2,$$ for all $A \in V_{\underline{s}}(T)^{\bot} $ (Definition \ref{defVkT}). 
  
\end{thm}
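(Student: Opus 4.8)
The plan is to read $\norm{\pi_{\mathcal{N}}\xi_{A}}^2$ as a quadratic form on $\sqrt{-1}\Lie{su}(N+1)$ (it is, up to the weight $\sigma_{t}^*\omega_{FS}^n$, the Hessian of $\mathcal{F}$ computed in Lemma \ref{lem3}) whose kernel is exactly $V_{\underline{s}}$, and to produce a uniform-in-$k$ lower bound for its restriction to $V_{\underline{s}}(T)^{\bot}$. The factor $k^{2}$ should arise because the natural differential operator controlling this form is fourth order, while the relevant metric $\omega_{FS}$ is comparable to $k\omega_{0}$ under $R$-bounded geometry (Definition \ref{def11}). Throughout I fix $A\in V_{\underline{s}}(T)^{\bot}$ and set $f=tr(A\mu)$ restricted to $M$.

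First I would identify the tangential part of $\xi_{A}$. Since $\mathcal{N}$ is $J$-invariant, $\omega_{FS}(\pi_{\mathcal{N}}\xi_{A},v)=0$ for $v\in TM$, so for tangent $v$ one has $\omega_{FS}(\xi_{A},v)=\bar{\partial}H(A)(v)=\bar{\partial}f(v)$; hence $\pi_{T}\xi_{A}$ is the Hamiltonian vector field of $f$ on $(M,\omega_{FS}|_{M})$, with $(1,0)$-part $\nabla^{1,0}_{FS}f$. Next, because $\xi_{A}$ is holomorphic on $\mathbb{P}^{N}$, its restriction is $\bar{\partial}$-closed as a section of $\iota^*T\mathbb{P}^{N}$; splitting $\bar{\partial}$ along $TM\oplus\mathcal{N}$ and isolating the tangential component gives $\bar{\partial}_{M}(\pi_{T}\xi_{A})=II^{*}(\pi_{\mathcal{N}}\xi_{A})$, where $II$ is the second fundamental form. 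The left-hand side is the Lichnerowicz operator $\mathcal{D}f=\bar{\partial}\nabla^{1,0}_{FS}f$, so
\[
\norm{\mathcal{D}f}_{L^{2}}\leq\norm{II}_{L^{\infty}}\norm{\pi_{\mathcal{N}}\xi_{A}}_{L^{2}}.
\]
Since $R$-bounded geometry controls $\norm{II}_{L^{\infty}}$ uniformly in $k$, this yields $\int_{M}\norm{\pi_{\mathcal{N}}\xi_{A}}^2\geq c\,\norm{\mathcal{D}f}_{L^{2}}^{2}$.

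I would then invoke a uniform spectral gap for $\mathcal{D}^{*}\mathcal{D}$: on the $L^{2}$-orthogonal complement of its kernel (the holomorphy potentials of $\omega_{FS}|_{M}$) one has $\norm{\mathcal{D}f}^{2}_{L^{2}}\geq\lambda_{1}\norm{f_{\bot}}^{2}_{L^{2}}$, where $f_{\bot}$ is the transverse part of $f$. Because $\mathcal{D}^{*}\mathcal{D}$ is fourth order and $\omega_{FS}|_{M}$ is uniformly comparable to $k\omega_{0}$, the eigenvalues rescale by $k^{-2}$, so $\lambda_{1}\geq c'k^{-2}$; the $R$-bounded and almost-balanced hypotheses keep the rescaled operator a uniformly bounded perturbation of the one attached to $\omega_{0}$, so the gap does not degenerate. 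Combining the two previous displays gives $k^{2}\int_{M}\norm{\pi_{\mathcal{N}}\xi_{A}}^{2}\geq C\norm{f_{\bot}}^{2}_{L^{2}}$.

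It then remains to show $\norm{f_{\bot}}^{2}_{L^{2}}\geq C''\,tr(A^{2})$ for $A\in V_{\underline{s}}(T)^{\bot}$, and this is where I expect the real difficulty. By Lemma \ref{lem2} one has the pointwise identity $H(A)^{2}+\norm{\xi_{A}}^2_{FS}=Tr(A^{2}\mu)$, which integrates to $\int_{M}f^{2}+\int_{M}\norm{\xi_{A}}^2=Tr(A^{2}\bar{\mu})$, and Lemma \ref{lem5} gives $Tr(A^{2}\bar{\mu})=tr(A^{2})+O(k^{-1})tr(A^{2})$; the kernel of $\mathcal{D}$ corresponds, up to controlled error, to the tangential directions $V_{\underline{s}}(T)$ to which $A$ is orthogonal. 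The main obstacle is the uniform non-degeneracy estimate: one must rule out, with a constant independent of $k$, that a nonzero $A\perp V_{\underline{s}}(T)$ produces a potential $tr(A\mu)$ lying too close to the space of holomorphy potentials. This is precisely the point at which the almost-balanced hypothesis $\norm{\bar{\mu}(\underline{s})}_{\textrm{op}}\leq\epsilon$ and the Bergman kernel asymptotics must be used, to guarantee that $A\mapsto tr(A\mu)$ has a transverse component bounded below on $V_{\underline{s}}(T)^{\bot}$; controlling the interaction of this estimate with the $T$-invariant decomposition of $H^{0}(M,L^{k})$ is the delicate bookkeeping that the relative setting adds on top of the classical argument of Phong--Sturm and Mabuchi.
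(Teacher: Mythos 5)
Your steps 1--3 are a potential-level repackaging of the paper's Propositions \ref{prop12} and \ref{prop13}: identifying $\pi_{T}\xi_{A}$ with the Hamiltonian field of $f=tr(A\mu)|_{M}$, bounding $\norm{\mathcal{D}f}$ by the second fundamental form times $\norm{\pi_{\mathcal{N}}\xi_{A}}$, and invoking a rescaled spectral gap are all sound in outline (modulo the uniformity of the gap over $R$-bounded metrics, which is the same issue the paper handles by comparison with $\omega_{0}$). The fatal problem is your final step. The inequality $\norm{f_{\bot}}^{2}_{L^{2}}\geq C''\,tr(A^{2})$ on $V_{\underline{s}}(T)^{\bot}$ is not just "delicate bookkeeping" left to do --- it cannot hold. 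Since $\norm{f_{\bot}}_{L^{2}}\leq\norm{f}_{L^{2}}=\norm{tr(A\mu)|_{M}}_{L^{2}}$, your inequality would in particular force a uniform lower bound of the form $c\,tr(A^{2})\leq\norm{H_{k}(A)}^{2}_{L^{2}}$, and the paper states explicitly at the start of Section 4 that no bound $ck^{-M}tr(A^{2})\leq\norm{H_{k}(A)}^{2}_{L^{2}}$ exists for any $M$, with $\mathbb{CP}^{1}$ (via \cite{KMS}) as a counterexample: there are unit-norm matrices $A$, generically orthogonal to the low-dimensional subspace $V_{\underline{s}}(T)$, whose potentials restrict to functions of super-polynomially small $L^{2}$ norm on $M$. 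For such $A$, integrating Lemma \ref{lem2} over $M$ gives $\norm{f}^{2}_{L^{2}}+\norm{\xi_{A}}^{2}_{L^{2}}=Tr(A^{2}\bar{\mu})\approx tr(A^{2})$, so the entire mass sits in the vector field $\xi_{A}$ and is invisible to any estimate channeled through the potential $f$; your chain $k^{2}\norm{\pi_{\mathcal{N}}\xi_{A}}^{2}\gtrsim\norm{f_{\bot}}^{2}$ then yields nothing, even though the theorem's conclusion is true for these matrices.

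The paper's proof avoids exactly this by never lower-bounding $tr(A^{2})$ through the function: the key substitute is Proposition \ref{prop11} (Phong--Sturm), the estimate \eqref{eq20} $\norm{A}^{2}\leq c_{R}k\norm{\xi_{A}}^{2}$, which holds for \emph{all} $A$ and bounds $tr(A^{2})$ by the $L^{2}$ norm of the vector field along $M$, losing only one factor of $k$. This is combined with \eqref{eq21}, $c'_{R}\norm{\pi_{T}\xi_{A}}^{2}\leq k\norm{\pi_{\mathcal{N}}\xi_{A}}^{2}$, proved via a first-order Poincar\'e inequality for $\Delta_{\bar{\partial}}$ on $T$-invariant vector fields orthogonal to $\Lie{t}^{\C}$ (eigenvalue scaling $k^{-1}$, not your single fourth-order gap scaling $k^{-2}$; the factor $k^{2}$ arises as $k\cdot k$) together with Proposition \ref{prop12}. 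Note also that \eqref{eq21} holds on $W_{\underline{s}}(T)$, the $L^{2}$ vector-field orthogonal complement of $V_{\underline{s}}(T)$, which is \emph{not} the trace-form complement $V_{\underline{s}}(T)^{\bot}$; the paper's final algebraic step decomposes $A=A_{1}+A_{2}$ with $A_{1}\in V_{\underline{s}}(T)$, $A_{2}\in W_{\underline{s}}(T)$, and uses $tr(AA_{1})=0$, $\pi_{\mathcal{N}}\xi_{A_{1}}=0$ to get $tr(A_{2}^{2})=tr(A^{2})+tr(A_{1}^{2})\geq tr(A^{2})$. Your write-up silently conflates these two notions of orthogonality (trace-orthogonality of $A$ to $V_{\underline{s}}(T)$ versus $L^{2}$-transversality of $f$ to holomorphy potentials), and this transfer step, together with Proposition \ref{prop11}, is precisely the content your proposal is missing.
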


For any $\underline{s},$ define the $L^2$-orthogonal complement of $V_{\underline{s}}(T) $ by
$$W_{\underline{s}}(T)=\{B \in \Lie{s}_{T}| \int_{M}  \inner{\xi_{A},\pi_{T}\xi_{B}}_{\omega_{0}} \omega_{0}^n=0, \textrm{for all} \, A \in V_{\underline{s}} (T)\}.$$
Note that Theorem \ref{thm3} will follow from the following.
\begin{equation}\label{eq20}\norm{A}^2\leq c_{R} k\norm{\xi_{A}}^2, \end{equation}
\begin{equation}\label{eq21} c^{'}_{R}\norm{\pi_{T}\xi_{A}}^2\leq k\norm{\pi_{\mathcal{N}}\xi_{A}}^2, \,\,\, \textrm{for}\,\, A \in W_{\underline{s}}(T).\end{equation}
For a proof of \eqref{eq20}, we refer the reader to \cite[p. 703-705 ]{PS2} (Proposition \ref{prop11} below). We will prove \eqref{eq21} in Proposition \ref{prop13}. Assuming these, we give
the proof of Theorem.\ \ref{thm3}.

\begin{proof}[Proof of Theorem \ref{thm3}]
Proposition \ref{prop11} and Proposition \ref{prop13} imply that \begin{equation}\label{eq11}ctr(B^2)\leq k^2||\pi_{\mathcal{N}}\xi_{B}||^2,\,\, \textrm{ for all B} \in W_{\underline{s}}(T).\end{equation} Given $A \in V_{\underline{s}}(T)^{\perp}$, there exist $A_{1} \in V_{\underline{s}}(T)$ and $A_{2} \in W_{\underline{s}}(T)$ such that $A=A_{1}+A_{2}$ since $V_{\underline{s}}(T)\oplus W_{\underline{s}}(T)=\Lie{s}_{T}.$ By definition, we have $tr(AA_{1})=0$ and $\pi_{\mathcal{N}}\xi_{A_{1}}=0$. Hence, \begin{align*} k^2\norm{\pi_{\mathcal{N}}\xi_{A}}^2 & =k^2\norm{\pi_{\mathcal{N}}\xi_{A_{1}}+\pi_{\mathcal{N}}\xi_{A_{2}}}^2=\norm{\pi_{\mathcal{N}}\xi_{A_{2}}}^2\\&\geq C tr(A_{2}^2)=Ctr((A-A_{1})^2) \\&=Ctr(A^2)+Ctr(A_{1}^2)-2Ctr(AA_{1})=Ctr(A^2)+Ctr(A_{1}^2)\\&\geq Ctr(A^2).\end{align*}

\end{proof}

\begin{prop}(\cite[p. 703-705 ]{PS2})\label{prop11}
Under the assumptions of Theorem \ref{thm3}, there exists a positive constant $c_{R}$ such that for any $A\in \sqrt{-1}\Lie{su}(N+1)$, we have
$$ \norm{A}^2\leq c_{R} k\norm{\xi_{A}}^2,$$
 where $\norm{ . }$ in the right hand side denotes the $L^2$- norm
 with respect to the K\"ahler form $\widetilde{\omega}$ on $M$ and
 Fubini-Study metric on the fibers.

\end{prop}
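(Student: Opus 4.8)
The plan is to convert the matrix norm of $A$ into the geometric quantity $\norm{\xi_A}^2$ via the pointwise identity of Lemma \ref{lem2}. Setting $B=A$ there gives $H_A^2+\norm{\xi_A}_{FS}^2=Tr(A^2\mu)$ at each point of $M$; integrating against $\omega_{FS}^n$ and taking the constant matrix $A^2$ out of the integral yields
$$\int_M H_A^2\,\omega_{FS}^n+\norm{\xi_A}^2=Tr(A^2\bar\mu),\qquad \bar\mu:=\int_M\mu\,\omega_{FS}^n.$$
Writing $\bar\mu=D\cdot\mathrm{Id}+M$ with $M$ trace-free as in Lemma \ref{lem5}, the hypotheses of Theorem \ref{thm3} give $D$ close to $1$ and $\norm{M}_{op}\le\eps$ small. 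Since $A^2\succeq0$ we have $\abs{Tr(A^2M)}\le\norm{M}_{op}\,Tr(A^2)$, hence $Tr(A^2\bar\mu)\ge(D-\norm{M}_{op})\norm{A}^2\ge\tfrac12\norm{A}^2$ once $\eps$ is small enough (here $\norm{A}^2=Tr(A^2)$). Thus
$$\norm{A}^2\le 2\Big(\int_M H_A^2\,\omega_{FS}^n+\norm{\xi_A}^2\Big),$$
and the problem reduces to bounding $\int_M H_A^2$ by a multiple of $k\norm{\xi_A}^2$.

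To control $\int_M H_A^2$ I would split $H_A|_M$ into its $\omega_{FS}$-mean $\bar H_A=\tfrac1V\int_M H_A\,\omega_{FS}^n$ and the mean-zero remainder. Restricting $\bar\partial H_A=\iota_{\xi_A}\omega_{FS}$ from $\PNK$ to $M$ identifies the metric dual of $\bar\partial(H_A|_M)$ with the tangential projection $\pi_T\xi_A$, so that $\norm{\bar\partial H_A}_{L^2(M)}^2=\norm{\pi_T\xi_A}^2\le\norm{\xi_A}^2$. The spectral-gap (Poincar\'e) inequality for $\Box=\bar\partial^*\bar\partial$ on $(M,\omega_{FS})$ then gives
$$\int_M\abs{H_A-\bar H_A}^2\,\omega_{FS}^n\le\lambda_1(\omega_{FS})^{-1}\,\norm{\pi_T\xi_A}^2\le\lambda_1(\omega_{FS})^{-1}\,\norm{\xi_A}^2,$$
where $\lambda_1(\omega_{FS})$ is the first nonzero eigenvalue.

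The crux, and the step I expect to be the main obstacle, is the eigenvalue lower bound $\lambda_1(\omega_{FS})\ge c_R/k$; this is exactly where the factor $k$ is produced and where $R$-bounded geometry is indispensable. I would argue by rescaling: the metric $g_k:=\tfrac1k\omega_{FS}$ satisfies $\tfrac1R\omega_0\le g_k\le(R+1)\omega_0$ with the $C^a$ ($a\ge4$) bounds of Definition \ref{def11}, so $g_k$ is uniformly equivalent to the fixed metric $\omega_0$ with uniformly controlled geometry, independently of $k$. Rayleigh-quotient comparison with $(M,\omega_0)$ then yields $\lambda_1(g_k)\ge c_R>0$ uniformly in $k$, and the scaling law $\lambda_1(\omega_{FS})=\lambda_1(kg_k)=\tfrac1k\lambda_1(g_k)$ gives $\lambda_1(\omega_{FS})\ge c_R/k$. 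Consequently $\int_M\abs{H_A-\bar H_A}^2\le c_R^{-1}k\,\norm{\xi_A}^2$.

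Finally I would absorb the mean contribution. Since $Tr(A)=0$, $\bar H_A=\tfrac1V Tr(A\bar\mu)=\tfrac1V Tr(AM)$, so Cauchy--Schwarz gives $V\bar H_A^2\le\tfrac1V\norm{A}_{HS}^2\norm{M}_{HS}^2\le\tfrac{N+1}{V}\norm{M}_{op}^2\norm{A}^2$. By Lemma \ref{lem5}, $\tfrac{N+1}{V}\to1$ and $\norm{M}_{op}\le\eps$, whence $V\bar H_A^2\le C\eps^2\norm{A}^2$ with $C$ bounded. Combining the displays, $\int_M H_A^2=\int_M\abs{H_A-\bar H_A}^2+V\bar H_A^2\le c_R^{-1}k\norm{\xi_A}^2+C\eps^2\norm{A}^2$, and substituting into $\norm{A}^2\le2(\int_M H_A^2+\norm{\xi_A}^2)$ yields $(1-2C\eps^2)\norm{A}^2\le 2(c_R^{-1}+1)k\,\norm{\xi_A}^2$. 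Shrinking $\eps$ so that $2C\eps^2\le\tfrac12$ gives the desired bound $\norm{A}^2\le c_R k\norm{\xi_A}^2$.
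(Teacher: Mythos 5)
Your proposal is correct and follows essentially the same route as the Phong--Sturm argument the paper cites for this proposition \cite[pp.~703--705]{PS2}: integrate the pointwise identity $H_A^2+\abs{\xi_A}_{FS}^2=\mathrm{tr}(A^2\mu)$ of Lemma \ref{lem2} over $M$, control $\int_M H_A^2$ by a Poincar\'e inequality on $(M,\widetilde{\omega})$ whose first nonzero eigenvalue is bounded below by $c_R/k$ (via rescaling and Rayleigh-quotient comparison with $\omega_0$, which is exactly what $R$-bounded geometry provides), identify the dual of $\bar{\partial}(H_A|_M)$ with $\pi_T\xi_A$, and absorb the mean $\bar{H}_A=\frac{1}{V}\mathrm{tr}(AM)$ using tracelessness of $A$ and the hypothesis that the trace-free part of $\bar{\mu}$ is $\epsilon$-small. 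Your reading of the hypothesis $\norm{\bar{\mu}(\underline{s})}_{op}\leq\epsilon$ as smallness of $M^{(k)}=\bar{\mu}-D^{(k)}I$ is the intended one, and all constants you produce depend only on $R$ and the fixed data, as required.
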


\begin{Rmk}

Proposition \ref{prop11} holds even if $\textrm{Aut}(M,L)$ is not discrete.  

\end{Rmk}

\begin{prop}(\cite[p.705-708]{PS2})\label{prop12}
 For any holomorphic vector field $V$ on $\mathbb{P}^{N}$, we
 have
 $$c_{R} \abs{\pi_{\mathcal{N}} V}^2 \geq   \abs{\overline{\partial}(\pi_{\mathcal{N}}
 V)}^2.$$

\end{prop}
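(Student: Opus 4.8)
The plan is to reduce the stated pointwise inequality to a uniform bound on the second fundamental form of the embedding $\iota_{\underline{s}}: M \to \mathbb{P}^{N}$, and then to extract that bound from the $R$-bounded geometry hypothesis. The key input is that a holomorphic vector field $V$ on $\mathbb{P}^{N}$ restricts to a \emph{holomorphic} section of $\iota^{*}T\mathbb{P}^{N}$, so that $\bar\partial V=0$, where $\bar\partial$ denotes the $\bar\partial$-operator of the pulled-back holomorphic structure (equivalently, of the Chern connection). The mechanism I want to exploit is that, although $\pi_{\mathcal{N}}V$ is in general not holomorphic, its failure to be so is governed by $\pi_{\mathcal{N}}V$ itself and not by the tangential part $\pi_{T}V$.

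To see this I would write $\bar\partial$ in block form relative to the $C^{\infty}$-orthogonal splitting $\iota^{*}T\mathbb{P}^{N}=TM\oplus\mathcal{N}$. Since $TM$ is a \emph{holomorphic} subbundle, $\bar\partial$ sends smooth sections of $TM$ to $TM$-valued $(0,1)$-forms; hence its lower-left block vanishes and its upper-right block is $-B^{*}$, the adjoint of the second fundamental form $B\in A^{1,0}(\mathrm{Hom}(TM,\mathcal{N}))$ of $M$ in $\mathbb{P}^{N}$, so that $\bar\partial=\left(\begin{smallmatrix}\bar\partial_{TM}&-B^{*}\\0&\bar\partial_{\mathcal{N}}\end{smallmatrix}\right)$. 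Decomposing $V=\pi_{T}V+\pi_{\mathcal{N}}V$ and using $\bar\partial V=0$, the $\mathcal{N}$-component gives $\bar\partial_{\mathcal{N}}(\pi_{\mathcal{N}}V)=0$, i.e. $\pi_{\mathcal{N}}V$ is holomorphic as a section of the normal (quotient) bundle. Applying $\bar\partial$ directly to $\pi_{\mathcal{N}}V$ then leaves only the tangential off-diagonal term,
$$\bar\partial(\pi_{\mathcal{N}}V)=-B^{*}(\pi_{\mathcal{N}}V),$$
so that pointwise $\abs{\bar\partial(\pi_{\mathcal{N}}V)}^{2}\leq\norm{B^{*}}_{\mathrm{op}}^{2}\,\abs{\pi_{\mathcal{N}}V}^{2}$. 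The proposition then holds with $c_{R}=\sup_{M}\norm{B^{*}}_{\mathrm{op}}^{2}=\sup_{M}\norm{B}_{\mathrm{op}}^{2}$. Note that holomorphicity of $V$ is used precisely to kill the $\mathcal{N}$-valued term $\bar\partial_{\mathcal{N}}(\pi_{\mathcal{N}}V)$; for a general section this estimate would fail.

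It remains to bound the pointwise norm of the second fundamental form of $\iota_{\underline{s}}(M)\subset\mathbb{P}^{N}$, measured with respect to the Fubini-Study metric $\widetilde{\omega}=\iota^{*}\omega_{FS}$, by a constant depending only on $R$ (in particular independent of $k$ and $N_{k}$); this is where the real work lies. I would control $\norm{B}^{2}$ through the Gauss equation, which writes it as the difference between the holomorphic bisectional curvatures of the ambient $(\mathbb{P}^{N},\omega_{FS})$ restricted to $M$ and the intrinsic ones of $(M,\widetilde{\omega})$. The ambient term is bounded by a universal constant, because the Fubini-Study metric has bisectional curvature pinched between fixed bounds independently of $N$. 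For the intrinsic term, $R$-bounded geometry forces $\widetilde{\omega}$ to be uniformly equivalent to, and $C^{a}$-close ($a\geq 4$) to, $\widetilde{\omega}_{0}=k\omega_{0}$; since rescaling a fixed K\"ahler metric by $k$ scales its sectional curvatures by $k^{-1}$, the intrinsic curvatures of $(M,\widetilde{\omega})$ are $O(k^{-1})$ and hence uniformly bounded. Combining the two gives $\norm{B}_{\widetilde{\omega}}^{2}=O(1)$ uniformly, with a constant depending only on $R$, which is the desired $c_{R}$. The main obstacle is precisely this curvature estimate: one must verify that the ambient bound is genuinely independent of $N$, and that $R$-bounded geometry yields honest $C^{2}$-control (not merely $C^{0}$-equivalence) on the induced metric, so that the Gauss equation can be invoked with constants uniform in $k$.
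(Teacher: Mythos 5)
Your proof is correct and takes essentially the same route as the argument this paper defers to (it gives no proof of Proposition 3.7, citing \cite[p.705--708]{PS2}): there, too, the point is that holomorphicity of $V$ and the block-triangularity of $\bar{\partial}$ with respect to the smooth splitting $\iota^*T\mathbb{P}^{N}=TM\oplus\mathcal{N}$ (with $TM$ holomorphic) yield $\bar{\partial}(\pi_{\mathcal{N}}V)=-B^{*}(\pi_{\mathcal{N}}V)$, after which one bounds the second fundamental form uniformly in $k$ using the curvature of the Fubini--Study metric (pinched independently of $N$) together with the $C^{a}$-control, $a\geq 4$, and uniform equivalence $\frac{1}{R}\widetilde{\omega}_{0}\leq\widetilde{\omega}\leq(R+1)\widetilde{\omega}_{0}$ furnished by $R$-bounded geometry. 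One cosmetic slip: the intrinsic curvature of $(M,\widetilde{\omega})$ is only $O(1)$ with constant depending on $R$, not $O(k^{-1})$ --- it is $\widetilde{\omega}_{0}=k\omega_{0}$ whose curvature is $O(k^{-1})$, while the $C^{a}$-close perturbation contributes bounded terms --- but uniform boundedness is all your Gauss-equation step requires, so nothing breaks.
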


\begin{prop}(\cite[p. 705-708]{PS2} and \cite{Ma3})\label{prop13}
Under the assumptions of Theorem \ref{thm3}, there exists a constant $c_{R}$ such that for any $A \in W_{\underline{s}}(T)$, we have
$$ c_{R}\norm{\pi_{T}\xi_{A}}^2\leq k\norm{\pi_{\mathcal{N}}\xi_{A}}^2.$$

\end{prop}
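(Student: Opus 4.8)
The plan is to derive the inequality from three ingredients: the holomorphicity of $\xi_{A}$ on $\PNK$, the pointwise bound on the second fundamental form supplied by Proposition \ref{prop12}, and a spectral gap for the $\bar{\partial}$-operator acting on tangential vector fields, with the factor $k$ produced by the scaling of the geometry.

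First I would exploit that $\xi_{A}$ is a genuine holomorphic vector field on $\PNK$, so its restriction to $M$ is a holomorphic section of $\iota_{\underline{s}}^{*}T\PNK$ and is annihilated by the induced $\bar{\partial}$-operator. Decomposing $\xi_{A}=\pi_{T}\xi_{A}+\pi_{\mathcal{N}}\xi_{A}$ relative to the smooth splitting $\iota_{\underline{s}}^{*}T\PNK=TM\oplus\mathcal{N}$ and using that $TM$ is a \emph{holomorphic} subbundle (so $\bar{\partial}$ preserves its sections), the $TM$-component of $\bar{\partial}\xi_{A}=0$ reads $\bar{\partial}_{TM}(\pi_{T}\xi_{A})=-\bar{\partial}(\pi_{\mathcal{N}}\xi_{A})$, the right-hand side being the second fundamental form applied to the normal part, which lands in $TM$ (the $\mathcal{N}$-component of $\bar{\partial}\xi_{A}=0$ forces $\bar{\partial}_{\mathcal{N}}(\pi_{\mathcal{N}}\xi_{A})=0$, so $\bar{\partial}(\pi_{\mathcal{N}}\xi_{A})$ is entirely tangential). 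Hence $\norm{\bar{\partial}_{TM}(\pi_{T}\xi_{A})}=\norm{\bar{\partial}(\pi_{\mathcal{N}}\xi_{A})}$, and Proposition \ref{prop12} gives $\norm{\bar{\partial}_{TM}(\pi_{T}\xi_{A})}^{2}\leq c_{R}\norm{\pi_{\mathcal{N}}\xi_{A}}^{2}$. It therefore suffices to bound $\norm{\pi_{T}\xi_{A}}^{2}$ by a multiple of $k\,\norm{\bar{\partial}_{TM}(\pi_{T}\xi_{A})}^{2}$.

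Next I would use the hypothesis $A\in W_{\underline{s}}(T)$. Unwinding its defining condition and using that $\xi_{B}$ is tangent to $M$ for $B\in V_{\underline{s}}(T)$ (so that $\langle\xi_{A},\pi_{T}\xi_{B}\rangle=\langle\pi_{T}\xi_{A},\xi_{B}\rangle$ pointwise), the tangential field $\pi_{T}\xi_{A}$ is $L^{2}(\omega_{0})$-orthogonal to every holomorphic vector field $\xi_{B}|_{M}$ with $B\in V_{\underline{s}}(T)$. These are exactly the $T$-invariant holomorphic vector fields tangent to $M$ that $\pi_{T}\xi_{A}$ can meet, so $\pi_{T}\xi_{A}$ is orthogonal to the relevant kernel of $\bar{\partial}_{TM}$, and a Poincar\'e (spectral gap) inequality $\norm{u}^{2}\leq\lambda_{1}^{-1}\norm{\bar{\partial}_{TM}u}^{2}$ applies to $u=\pi_{T}\xi_{A}$, where $\lambda_{1}$ is the first nonzero eigenvalue of $\bar{\partial}_{TM}^{*}\bar{\partial}_{TM}$.

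The factor $k$ enters through the scaling of this eigenvalue. Since $\underline{s}$ has $R$-bounded geometry, $\widetilde{\omega}$ is uniformly equivalent to $\widetilde{\omega}_{0}=k\omega_{0}$; the Laplacian, and hence $\lambda_{1}$ computed in $\widetilde{\omega}$, scales like $k^{-1}$ times its $\omega_{0}$-counterpart, giving a uniform lower bound $\lambda_{1}\geq c_{R}/k$. Chaining $\norm{\pi_{T}\xi_{A}}^{2}\leq(k/c_{R})\norm{\bar{\partial}_{TM}(\pi_{T}\xi_{A})}^{2}\leq(k/c_{R})\,c_{R}'\norm{\pi_{\mathcal{N}}\xi_{A}}^{2}$ then yields the claim after renaming constants. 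I expect the genuine obstacle to be exactly this quantitative point: one must prove a lower bound $\lambda_{1}\geq c_{R}/k$ for the spectral gap that is \emph{uniform} over all bases of $R$-bounded geometry and carries the correct power of $k$, and one must justify that the orthogonality provided by $W_{\underline{s}}(T)$ annihilates the whole kernel of $\bar{\partial}_{TM}$ that $\pi_{T}\xi_{A}$ sees, leaving no surviving holomorphic piece. Obtaining the eigenvalue bound uniformly in $k$, rather than for a single fixed metric, is precisely where the $R$-bounded geometry hypothesis and the uniform comparison between the $\widetilde{\omega}$- and $\omega_{0}$-norms must be invoked with care.
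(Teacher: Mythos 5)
Your overall route coincides with the paper's (which follows Phong--Sturm \cite{PS2} and Mabuchi \cite{Ma3}): holomorphicity of $\xi_{A}$ trades $\bar{\partial}(\pi_{T}\xi_{A})$ for $\bar{\partial}(\pi_{\mathcal{N}}\xi_{A})$, Proposition \ref{prop12} bounds the latter pointwise by $\pi_{\mathcal{N}}\xi_{A}$, and a Poincar\'e inequality for $\bar{\partial}$ on $T$-invariant tangential fields, rescaled from $\omega_{0}$ to $\widetilde{\omega}_{0}=k\omega_{0}$ (whence the factor $k$: $\norm{W}^{2}_{L^2}$ scales by $k^{n+1}$ while $\norm{\bar{\partial}W}^{2}_{L^2}$ scales by $k^{n}$) and transferred to $\widetilde{\omega}$ via the uniform equivalence furnished by $R$-bounded geometry, closes the chain. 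All of this matches the paper step for step.

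However, the one step you flag but do not resolve is a genuine gap as written: the claim that the fields $\xi_{B}|_{M}$, $B\in V_{\underline{s}}(T)$, are ``exactly the $T$-invariant holomorphic vector fields tangent to $M$ that $\pi_{T}\xi_{A}$ can meet.'' The kernel of $\bar{\partial}$ on $T$-invariant sections of $TM$ consists of \emph{all} $T$-invariant holomorphic vector fields, and these may include non-Hamiltonian fields (fields with empty zero locus, hence not in $\Lie{g}$ by Proposition \ref{propG} --- e.g.\ on manifolds with a torus factor), which are not of the form $\xi_{B}$. Orthogonality to $V_{\underline{s}}(T)$ alone therefore does not place $\pi_{T}\xi_{A}$ in the orthogonal complement of the full kernel, and the spectral gap at the first nonzero eigenvalue of $\bar{\partial}^{*}\bar{\partial}$ cannot be invoked as you state it. The paper never needs the full kernel: the Poincar\'e inequality is proved only on $\Gamma_{0}^{T}$, the subspace of the Hamiltonian fields $\Gamma_{H}^{T}$ (those $W$ with $\bar{\partial}f=\iota_{W}\omega_{0}$) orthogonal to $\Gamma_{H}^{T}\cap\Lie{g}=\Lie{t}^{\C}$ --- the equality holding because the $T$-invariant part of $\Lie{g}$ is the centralizer of the maximal torus, i.e.\ $\Lie{t}^{\C}$ itself. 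That $\pi_{T}\xi_{A}$ lies in this Hamiltonian subspace is exactly where \cite[p.~710]{PS2} is cited: $\xi_{A}$ is Hamiltonian on $\PNK$ with potential $tr(A\mu)$, and since $\mathcal{N}$ is the Fubini--Study-orthogonal complement of $TM$, the normal part contracts trivially with the K\"ahler form along $M$, so $\iota_{\pi_{T}\xi_{A}}\widetilde{\omega}=\bar{\partial}\big(tr(A\mu)|_{M}\big)$ (with the comparison between $\widetilde{\omega}$ and $\widetilde{\omega}_{0}$ handled by $R$-bounded geometry); the condition $A\in W_{\underline{s}}(T)$ then supplies precisely the orthogonality to $\Lie{t}^{\C}$ that defines $\Gamma_{0}^{T}$. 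So your architecture is right, but the missing idea is to restrict the spectral estimate to Hamiltonian fields, where the surviving holomorphic piece reduces to $\Lie{t}^{\C}$, rather than attempting to annihilate the whole kernel.
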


\begin{proof}
We follow the argument of Phong and Sturm \cite{PS2} and Mabuchi \cite{Ma3}.
Let $\lambda$ be the first nonzero eigenvalue of $\Delta_{\bar{\partial}}$ on $\Gamma^T=\Gamma^T(M,TM)$ with respect to the metric $\omega_{0}$ on $M$ and $TM$, where $\Gamma^T(M,TM)$ is the space of $T$-invariant vector fields on $M$. Let $\Gamma_{H}^T$ be the sub-space of smooth $T$-invariant vector fields $W$ such that $\bar{\partial}f=\iota_{W}\omega_{0}$ for some $f:M \to \C.$ 
Therefore, an argument similar to the one given in \cite[p.p. 708-710]{PS2} implies that
$$\lambda \norm{W}^2_{L^2(\omega_{0})} \leq \norm{\overline{\partial}W}^2_{L^2(\omega_{0})},$$
for any $W \in \Gamma_{0}^T$. Here,  $$\Gamma_{0}^T=\{ W \in \Gamma_{H}^T| \int_{M}\inner{W,X}_{\omega_{0}} \omega_{0}^n=0, \forall X \in \Gamma_{H}^T \bigcap \Lie{g}=\Lie{t}^{\C}\}.$$ Hence,

\begin{align*}\lambda \norm{W}^2_{L^2(\widetilde{\omega}_{0})}&=\lambda \int
\abs{W}_{\widetilde{\omega}_{0}}^2 \widetilde{\omega}_{0}^n =
\lambda k^{n+1}\int \abs{W}_{\omega_{0}}^2 \omega_{0}^n \leq k^{n+1}\int
\norm{\overline{\partial}W}_{\omega_{0}}^2 \omega_{0}^n\\&=k\int
\norm{\overline{\partial}W}_{\widetilde{\omega}_{0}}^2
\widetilde{\omega}_{0}^n=
k\norm{\overline{\partial}W}^2_{L^2(\widetilde{\omega}_{0})}.\end{align*}
Therefore, there exists a positive constant $c_{R}$ depends on $R$ and
independent of $k$, such that for any $\widetilde{\omega}$
having $R$-bounded geometry and any $W \in  \Gamma_{0}^T,$ we have

$$c_{R}\norm{W}^2_{L^2(\widetilde{\omega})} \leq k
\norm{\overline{\partial}W}^2_{L^2(\widetilde{\omega})}.$$
 For $A \in W_{\underline{s}}(T)$, we have $ \pi_{T} \xi_{A} \in \Gamma_{0}^T$(c.f. \cite[p. 710]{PS2}). Hence, 
$$c_{R}\norm{\pi_{T}\xi_{A}}_{L^2(\widetilde{\omega})}^2\leq
k\norm{\overline{\partial}(\pi_{T}\xi_{A})}_{L^2(\widetilde{\omega})}^2.$$
Applying Proposition \ref{prop12} implies that
\begin{align*}c\norm{\pi_{T}\xi_{A}}_{L^2(\widetilde{\omega})}^2&\leq
k\norm{\overline{\partial}(\pi_{T}\xi_{A})}_{L^2(\widetilde{\omega})}^2=k\norm{\overline{\partial}(\pi_{\mathcal{N}}\xi_{A})}_{L^2(\widetilde{\omega})}^2\\&\leq c_{R}k\norm{\pi_{\mathcal{N}}\xi_{A}}_{L^2(\widetilde{\omega})}^2.\end{align*}

\end{proof}

\section{Asymptotic Expansions}
 
The main goal of this section is to construct approximate solutions to the equation \eqref{eq,Qequation}. It is done in Theorem \ref{thm2}. In order to construct such approximate solutions, we first construct approximate solutions for the equation $H_{k}(Q_{k}(F))=H_{k}(A)$ (c.f. \eqref{eq8} and Definition \ref{def4}). Then we use the fact that 
the maps $$H_{k}: \sqrt{-1}\Lie{u}(N_{k}+1) \to C^{\infty}(M,\mathbb{R})$$ are injective. We are also need a uniform lower bound for $||H_{k}||_{op}$. Unfortunately there is no positive $c,M$ such that 
$$ck^{-M}tr(A^2) \leq ||H_{k}(A)||^2_{L^2}.$$ A simple example is $\mathbb{CP}^1$ (c.f. \cite{KMS}). However, if we restrict the domain of $H_{k}$ to $Q_{k}(W)$ for a finite dimensional subspace $W \subset C^{\infty}(M)$, then we can obtain a uniform lower bound for $||H_{k}||_{op}$ (Proposition \ref{prop6}).
\subsection{A lower bound on Hamiltonians}
Let $ h$ be a positive hermitian metric on $L$ and $\omega=\ddbar \log(h)$ be the corresponding K\"ahler form. Let $\underline{s}^{(k)}=(s_{0}^{(k)}, \dots ,s_{N_{k}}^{(k)})$ be a sequence of ordered orthonormal basis  for $H^0(L^k)$ with respect to $\Hilb(h)$. Such bases give embeddings 
$\iota_{k}: M \To \PNK$. Therefore, we have sequences $h_{FS,k}:=\FS(\Hilb(h))$ and $\omega_{FS,k}$ of hermitian metrics and K\"ahler forms respectively. We also have a sequence of maps $H_{k}: \sqrt{-1}\Lie{u}(N_{k}+1) \to C^{\infty}(M)$ given by 
\begin{equation}\label{eq8}H_{k}(A)= tr(\mu_{k}A)=\sum_{i,j} A_{ij}\inner{s_{j}^{(k)}, s_{i}^{(k)}}_{\FS(\Hilb(h))}.\end{equation}

\begin{Def}\label{def4}

Let  $f \in C^{\infty}(M, \R)$ and $\underline{s}^{(k)}=(s_{0}^{(k)}, \dots ,s_{N_{k}}^{(k)})$ be an orthonormal basis for $H^0(M,L^k)$ with respect to $\Hilb(h).$ Define $Q_{k}(f) \in \sqrt{-1}\Lie{u}(N_{k}+1)$ by $$\big(Q_{k}(f) \big)_{ij}=\int_{M} f\inner{s_{i}^{(k)}, s_{j}^{(k)}}_{h ^k}\omega^n.$$
\end{Def}

Note that our definition is slightly different from the one in \cite{F}. Similar calculation as in 
\cite{F} concludes the following proposition. 
\begin{prop}[ \cite{F}, Lemma 15] \label{prop4}

We have an asymptotic expansion 

$$H_{k}(Q_{k}(f)) \sim f+q_{1}(f)k^{-1}+q_{2}(f)k^{-2}+\cdots.$$ The expansion is uniform if $f$ varies in a compact set in $C^{\infty}-$topology. It is also uniform with respect to the K\"ahler metric $\omega.$
 Moreover,
$q_{1}(f)=-2\Delta f.$

\end{prop}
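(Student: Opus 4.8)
The plan is to establish the asymptotic expansion for $H_{k}(Q_{k}(f))$ by computing the composition directly from the definitions \eqref{eq8} and Definition \ref{def4}, and reducing everything to the near-diagonal asymptotics of the Bergman kernel. Writing $\rho_{k}(x) = \sum_{i} |s_{i}^{(k)}(x)|^2_{h^k}$ for the Bergman density function (with respect to the orthonormal basis for $\Hilb(h)$), and more generally setting $B_{k}(x,y) = \sum_{i} s_{i}^{(k)}(x) \otimes \overline{s_{i}^{(k)}(y)}$ for the off-diagonal Bergman kernel, I would first unwind the two operators. By definition, $\big(Q_{k}(f)\big)_{ij} = \int_{M} f \inner{s_{i}^{(k)}, s_{j}^{(k)}}_{h^k}\omega^n$, and $\mu_{k}$ restricted to $M$ has entries $(\mu_{k})_{ij} = \inner{s_{i}^{(k)}, s_{j}^{(k)}}_{\FS(\Hilb(h))}$ by \eqref{eq7}. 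Substituting into $H_{k}(A) = \textrm{tr}(\mu_{k}A)$, the composition $H_{k}(Q_{k}(f))(x)$ becomes a double sum over $i,j$ that collapses, via the reproducing property, into an integral against the squared modulus of the Bergman kernel:
\begin{equation*}
H_{k}(Q_{k}(f))(x) = \frac{1}{\rho_{k}(x)}\int_{M} f(y)\, \abs{B_{k}(x,y)}^2_{h^k}\, \omega^n(y),
\end{equation*}
where the normalization by $\rho_{k}(x)$ comes from the defining relation $\sum_{i}|s_{i}^{(k)}|^2_{\FS(\Hilb(h))} = 1$ for the Fubini--Study metric.

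The heart of the argument is then the standard near-diagonal asymptotic expansion of the Bergman kernel (the results of Catlin \cite{C} and Zelditch \cite{Z}, as invoked in the introduction). The kernel $\abs{B_{k}(x,y)}^2_{h^k}$ concentrates, as $k \to \infty$, on a neighborhood of the diagonal of size $O(k^{-1/2})$ in the rescaled geometry, with a Gaussian peak along $y = x$. After rescaling $y = x + k^{-1/2}u$ in normal coordinates and Taylor-expanding $f(y)$ about $x$, the leading term reproduces $f(x)$ (since the total mass integrates to $\rho_{k}(x)$, canceling the normalization), while successive terms produce corrections in powers of $k^{-1}$; the odd powers of $k^{-1/2}$ vanish by the parity of the Gaussian weight. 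This is the mechanism that yields the claimed expansion $H_{k}(Q_{k}(f)) \sim f + q_{1}(f)k^{-1} + q_{2}(f)k^{-2} + \cdots$. The uniformity in $f$ over compact sets in $C^{\infty}$-topology, and in the metric $\omega$, follows from the corresponding uniformity built into the Bergman kernel expansion, since only finitely many derivatives of $f$ and of the metric enter at each order.

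To pin down the first coefficient $q_{1}(f) = -2\Delta f$, I would carry the Taylor expansion of $f$ to second order in the rescaled variable and combine the quadratic term with the subleading correction to the Bergman kernel peak. The second-order term in the Taylor expansion contributes the Laplacian of $f$ against the Gaussian, producing a multiple of $\Delta f$; the precise constant $-2$ is fixed by the normalization conventions in Definition \ref{def1} (the factor $(N_{k}+1)/V$ in $\Hilb$ and the volume normalization) together with the expansion of $\rho_{k}$ itself. I expect the main obstacle to be exactly this bookkeeping of constants: one must track the normalizations consistently through both operators $Q_{k}$ and $H_{k}$, and reconcile the author's conventions — which, as the remark before the proposition notes, differ slightly from those of \cite{F} — so that the coefficient comes out as $-2\Delta f$ rather than some other multiple. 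Since the statement asserts this follows from a calculation parallel to \cite[Lemma 15]{F}, the conceptual content is routine once the kernel expansion is in place, and the real work is verifying that the constant matches after the convention adjustment.
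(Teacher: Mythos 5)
Your proposal is correct in substance, and its first move is identical to the paper's: unwinding \eqref{eq8} and Definition \ref{def4} via $\FS(\Hilb(h))=\rho_{k}(h)^{-1}h^{k}$, the paper writes $H_{k}(Q_{k}(f))=\rho_{k}(h)^{-1}K_{f,k}$, where $K_{f,k}(x)=\sum_{i,j}\int_{M}f(y)\inner{s_{i}(y),s_{j}(y)}_{h^{k}}\inner{s_{j}(x),s_{i}(x)}_{h^{k}}\,dvol_{\omega}(y)$ is exactly your $\int_{M}f(y)\abs{B_{k}(x,y)}^{2}_{h^{k}}\,\omega^{n}(y)$. Where you part ways is in how the expansion of this kernel integral is obtained: the paper performs no rescaling or Taylor expansion at all, but quotes the Ma--Marinescu expansion \eqref{eq,K}, $K_{f,k}=k^{n}f+k^{n-1}\tilde{q}_{f,1}+\cdots$ with the explicit coefficient $\tilde{q}_{f,1}=S(\omega)f-2\Delta f$, multiplies by the Catlin--Tian--Yau--Zelditch expansion of $\rho_{k}(h)^{-1}$, and reads off $q_{1}(f)=\tilde{q}_{f,1}-S(\omega)f=-2\Delta f$ from the cancellation of the scalar-curvature terms; both uniformity statements are inherited wholesale from the cited expansions. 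Your route --- re-deriving the kernel asymptotics from the $k^{-1/2}$-scale near-diagonal Gaussian behavior of $B_{k}$ --- is sound and is essentially how the result of \cite{MM} is itself proved, but it re-proves a quotable theorem and leaves precisely the delicate part, the coefficient of $k^{-1}$, as unexecuted ``bookkeeping.'' One observation would close that gap cleanly inside your framework: with a consistently normalized ($L^{2}$-orthonormal) basis one has $Q_{k}(1)=I$ and hence $H_{k}(Q_{k}(1))=tr(\mu_{k})=1$ exactly, equivalently $\int_{M}\abs{B_{k}(x,y)}^{2}_{h^{k}}\,dvol_{\omega}(y)=\rho_{k}(x)$ by the reproducing property, so that $H_{k}(Q_{k}(f))(x)-f(x)=\rho_{k}(x)^{-1}\int_{M}\bigl(f(y)-f(x)\bigr)\abs{B_{k}(x,y)}^{2}_{h^{k}}\,dvol_{\omega}(y)$; thus the zeroth Taylor term cancels identically, every $q_{i}$ annihilates constants, no $S(\omega)f$-type term can survive (this is the intrinsic form of the paper's $\tilde{q}_{f,1}-S(\omega)f$ cancellation), and $q_{1}$ is forced to be a pure second-order operator whose constant is a single Gaussian second moment. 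Note also that the factor $(N_{k}+1)/V$ in $\Hilb$ is not where the constant $-2$ hides: it only rescales the basis, and provided the same normalization is carried consistently through $Q_{k}$ and the reproducing identity it drops out of the difference above; the $-2$ comes from the Hessian term in the kernel expansion, exactly as in \cite{MM}.
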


\begin{proof}
The kernel $K_{f,k}(x)$ is defined in  \cite{MM} as follows (See also \cite{F}):
$$K_{f,k}(x)=\sum_{i,j}\int_{M} f(y)\inner{s_{i}(y), s_{j}(y)}_{h ^k}\inner{s_{j}(x), s_{i}(x)}_{h ^k}dvol_{\omega}(y).$$Ma and Marinescu proved that there is an asymptotic expansion \begin{equation}\label{eq,K}K_{f,k}=k^n f+k^{n-1} \tilde{q}_{f,1}+\cdots, \end{equation} where $\tilde{q}_{f,i}$ are smooth functions on $M.$ In particular $\tilde{q}_{f,1}=S(\omega)f-2\Delta f.$ Moreover, the expansion is uniform if $f$ and $\omega$ vary in compact sets.  
Applying Catlin-Tian-Yau-Zelditch asymptotic expansion for the Bergman kernel $\rho_{k}(h)$ and Ma-Marniscu expansion \eqref{eq,K}, we have \begin{align*}H_{k}(Q_{k}(f))&=\rho_{k}(h)^{-1}K_{f,k}=(1-S(\omega)k^{-1}+\cdots)(f+\tilde{q}_{f,1}k^{-1}+O(k^{-2}))\\&=f+\Big(\tilde{q}_{f,1}-S(\omega) f\Big)k^{-1}+O(k^{-2})=f-2f\Delta f k^{-1}+O(k^{-2}).\end{align*}  
The first equality holds since $\FS(\Hilb(h))=\rho_{k}(h)^{-1}h^k$ and $$H_{k}(Q_{k}(f))=\sum_{i,j}\big(\int_{M} f\inner{s_{i}, s_{j}}_{h ^k}\omega^n\big)\inner{s_{j}, s_{i}}_{\FS(\Hilb(h))}.$$
\end{proof}

\begin{prop}\label{cor3}

Let $W \subset C^{\infty}(M, \mathbb{R})$ be a finite dimensional subspace. There exists a constant $c$ depends only on $W$ such that  $$\abs{tr(Q_{k}(f)^2)-k^{n}\norm{f}^2_{L^2} }\leq ck^{n-1}\norm{f}^2_{L^2},$$  for any $f \in W.$

\end{prop}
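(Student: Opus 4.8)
The plan is to reduce the computation of $tr(Q_{k}(f)^2)$ to the Ma--Marinescu kernel $K_{f,k}$ that already appears in the proof of Proposition \ref{prop4}, and then simply read off the first two terms of its asymptotic expansion. First I would expand the trace directly. Since $f$ is real, $Q_{k}(f)$ is Hermitian, so $tr(Q_{k}(f)^2)=\sum_{i,j}(Q_{k}(f))_{ij}(Q_{k}(f))_{ji}$, and unwinding Definition \ref{def4} gives
$$tr(Q_{k}(f)^2)=\int_{M}\int_{M}f(x)f(y)\sum_{i,j}\inner{s_{i}^{(k)}(x),s_{j}^{(k)}(x)}_{h^k}\inner{s_{j}^{(k)}(y),s_{i}^{(k)}(y)}_{h^k}\,\omega^n(x)\,\omega^n(y).$$
Comparing with the definition of $K_{f,k}$ (after the relabelling $i \leftrightarrow j$), the inner sum followed by the $y$-integration is exactly $K_{f,k}(x)$, so this step establishes the clean identity $tr(Q_{k}(f)^2)=\int_{M} f\,K_{f,k}\,\omega^n$.

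Second, I would substitute the Ma--Marinescu expansion \eqref{eq,K}, namely $K_{f,k}=k^n f+k^{n-1}\tilde{q}_{f,1}+O(k^{n-2})$ with $\tilde{q}_{f,1}=S(\omega)f-2\Delta f$, to obtain
$$tr(Q_{k}(f)^2)=k^{n}\norm{f}^2_{L^2}+k^{n-1}\int_{M}f\,\tilde{q}_{f,1}\,\omega^n+O(k^{n-2}).$$
The leading coefficient is precisely the desired $k^n\norm{f}^2_{L^2}$. I would then set $R_{k}(f):=tr(Q_{k}(f)^2)-k^n\norm{f}^2_{L^2}$ and reduce the proposition to showing $\abs{R_{k}(f)}\le ck^{n-1}\norm{f}^2_{L^2}$ for a constant depending only on $W$.

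Third comes the uniformity argument. Both $R_{k}(f)$ and $\norm{f}^2_{L^2}$ are homogeneous of degree two in $f$, so it suffices to bound $R_{k}$ on the unit sphere $S_{W}=\set{f\in W:\norm{f}_{L^2}=1}$. Because $W$ is finite dimensional, the $L^2$ and $C^\infty$ topologies agree on $W$, so $S_{W}$ is compact in $C^\infty$, and the Ma--Marinescu expansion holds uniformly over $S_{W}$. On this compact set the first-order coefficient $f\mapsto\int_{M}f\,\tilde{q}_{f,1}\,\omega^n$ is a continuous (quadratic) function, hence bounded, and the remainder is $O(k^{n-2})$ uniformly; for $k\ge 1$ this yields $\abs{R_{k}(f)}\le ck^{n-1}$ on $S_{W}$, and homogeneity recovers the general $f$.

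The computation itself is routine; the one point that genuinely needs care, and which I regard as the main obstacle, is the uniformity of the constant $c$ \emph{simultaneously} in $k$ and in $f\in W$. This rests on combining the homogeneity reduction to the compact sphere $S_{W}$ with the uniformity of the expansion \eqref{eq,K} over compact families, using the equivalence of norms on the finite-dimensional space $W$ to guarantee that $S_{W}$ is compact in the $C^\infty$ topology in which that uniformity is stated.
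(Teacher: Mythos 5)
Your proposal is correct and follows the paper's own proof exactly: both expand $tr(Q_{k}(f)^2)$ into a double integral, identify it as $\int_{M} f\,K_{f,k}\,\omega^n$, and invoke the Ma--Marinescu expansion \eqref{eq,K} together with its uniformity. Your homogeneity-plus-compact-sphere argument merely spells out the uniformity step that the paper compresses into one sentence, and it does so correctly.
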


\begin{proof}

By definition, we have \begin{align*}&tr(Q_{k}(f)^2)\\&= \sum_{i,j}\int_{M} \int_{M} f(x)f(y)\inner{s_{i}(x), s_{j}(x)}_{h ^k}\inner{s_{j}(y), s_{i}(y)}_{h ^k}dvol_{\omega}(x)dvol_{\omega}(y)\\&=\int_{M} f(x) K_{f,k}(x) dvol_{\omega}(x)=k^n \int_{M} f^2 \omega^n+O(k^{n-1}).\end{align*}
Therefore, uniformity of the asymptotic expansion \eqref{eq,K} concludes the proposition.  
\end{proof}

\begin{Def}

Let $X$ be a holomorphic vector field on $M$. Suppose that $X$ is Hamiltonian with respect to $\omega.$ We define the normalized  Hamiltonian $H_{X}$ by $$\bar{\partial} H_{X}=\iota_{X} \omega,\,\,\,\, \int_{M}H_{X}\omega^n=0.$$

\end{Def}

Note that if $A=R_{k}(X) \in V_{k}$ is the corresponding hermitian matrix in $\sqrt{-1}\Lie{su}(N_{k}+1),$ then $H_{k}(A)$ is a Hamiltonian for $X$ with respect to $\omega_{FS,k}$. However, it does not necessarily satisfy any normalization on $M$. On the other hand the sequence of metrics $k^{-1}\omega_{FS,k}$ converges to $\omega$ (\cite{T0}). Therefore, we expect a relationship between $H_{X}$ and $H_{k}(A)$. We have the following asymptotic expansion.

\begin{prop}\label{prop5}

Let $X$ be a Hamiltonian holomorphic vector field on $M$ and let $$A=\Lie{R}_{k}(X) \in V_{k}$$ be the corresponding hermitian matrix in $\sqrt{-1}\Lie{su}(N_{k}+1)$ (c.f. \eqref{neweq1}). 
Define  $c_{A}(k)=\frac{1}{V}\int_{M} H_{k}(A) \omega^n.$
Then  we have the following asymptotic expansion.
$$H_{k}(A-c_{A}(k)I) \sim k \big(H_{X}+k^{-2}h_{2}(X)+\cdots \big)$$ which holds in $C^{\infty}$ and is uniform if $X$varies in a compact set. Moreovere, $\int_{M}h_{i}\omega^n=0$ and there exist a constant $c$ independent of $k$ such that 
\begin{equation}\label{eqH}\abs{c_{A}(k)} \leq ck^{-\frac{n+2}{2}}tr(A^2)^{\frac{1}{2}}.\end{equation}

\end{prop}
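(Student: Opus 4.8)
The plan is to combine the asymptotic expansion for $H_k \circ Q_k$ from Proposition \ref{prop4} with the relationship between the matrix $A = \Lie{R}_k(X)$ and the operator $Q_k$. The key observation is that $H_k(A)$ is a Hamiltonian for the holomorphic vector field $\xi_A = X$ with respect to the Fubini-Study form $\omega_{FS,k}$, and since $k^{-1}\omega_{FS,k} \to \omega$, one expects $k^{-1} H_k(A)$ to approximate the $\omega$-Hamiltonian $H_X$ up to a constant. First I would establish that $A$ is (up to lower-order corrections) close to $Q_k$ applied to a Hamiltonian, i.e. identify the function $f$ such that $A = \Lie{R}_k(X)$ relates to $Q_k(f)$. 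The natural candidate is that $H_k(A)$, being a Hamiltonian for $X$ with respect to $\omega_{FS,k}$, and $k H_X$ being a Hamiltonian for $X$ with respect to $k\omega$, should agree to leading order; the correction terms $h_i$ arise from the difference between $\omega_{FS,k}$ and $k\omega$ as captured by the Bergman kernel expansion.

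To make this precise I would invoke the injectivity of $H_k$ and the expansion of Proposition \ref{prop4}: since $H_k(Q_k(f)) \sim f + q_1(f)k^{-1}+\cdots$, I can recursively solve for a function $g_k = H_X + k^{-2}h_2 + \cdots$ whose image under $kH_k \circ Q_k$ matches $H_k(A)$ after recentering by the constant $c_A(k)$. The normalization $\int_M h_i \,\omega^n = 0$ would follow by choosing each correction term in the complement of the constants and using that both $H_X$ and the recentered $H_k(A - c_A(k)I)$ integrate to zero against $\omega^n$ by construction; the defining property $\int_M H_X \omega^n = 0$ fixes the leading term, and each successive $h_i$ is determined up to a constant which is pinned down by the vanishing integral condition. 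Uniformity in $X$ over compact sets is inherited directly from the uniformity statement in Proposition \ref{prop4} together with continuity of $X \mapsto \Lie{R}_k(X)$.

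The estimate \eqref{eqH} on $c_A(k)$ is where I expect the main work. The constant $c_A(k) = \frac{1}{V}\int_M H_k(A)\,\omega^n$ measures the failure of $H_k(A)$ to be $\omega$-normalized. I would bound it by relating $\int_M H_k(A)\,\omega^n$ to a trace expression: since $(\mu_k)_{ij} = \inner{s_i^{(k)}, s_j^{(k)}}_{\FS(\Hilb(h))}$ and $H_k(A) = \mathrm{tr}(\mu_k A)$, integrating gives $\int_M H_k(A)\,\omega^n = \mathrm{tr}(A \int_M \mu_k \,\omega^n) = \mathrm{tr}(A \bar\mu_k)$ in the notation of Lemma \ref{lem5}. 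Writing $\bar\mu_k = D^{(k)} I + M^{(k)}$ with $M^{(k)}$ trace-free, and using that $A \in \sqrt{-1}\Lie{su}(N_k+1)$ is trace-free so that $\mathrm{tr}(A \cdot D^{(k)}I) = 0$, I would obtain $\int_M H_k(A)\,\omega^n = \mathrm{tr}(A M^{(k)})$. Applying Cauchy-Schwarz for the trace pairing gives $|\mathrm{tr}(A M^{(k)})| \leq \|M^{(k)}\|_{L^2} \, \mathrm{tr}(A^2)^{1/2}$, and the decay $\|M^{(k)}\|_{op} = O(k^{-1})$ from Lemma \ref{lem5} together with rank considerations (there are $N_k+1 \sim k^n$ eigenvalues) yields the Hilbert-Schmidt bound $\|M^{(k)}\|_{L^2} = O(k^{(n-2)/2})$. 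Dividing by $V$ and tracking the powers of $k$ produces exactly $|c_A(k)| \leq c\, k^{-(n+2)/2}\,\mathrm{tr}(A^2)^{1/2}$. The delicate point will be extracting the sharp exponent from the operator-norm decay rather than just a pointwise bound, so I would need Lemma \ref{lem5} to control the Hilbert-Schmidt norm of $M^{(k)}$ at the right order.
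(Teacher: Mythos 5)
Your argument for the estimate \eqref{eqH} contains a genuine quantitative gap: the identification $\int_M H_k(A)\,\omega^n = \mathrm{tr}(A\bar{\mu}_k)$ is wrong by a factor of roughly $k^n$, because in Lemma \ref{lem5} the matrix $\bar{\mu}_k$ is defined by integrating against the \emph{unrescaled} Fubini--Study volume $\omega_{FS,k}^n \approx k^n\omega^n$, whereas $c_A(k)$ integrates against $\omega^n$. With your identification, your own (correct) power count $\norm{M^{(k)}}_{HS} \leq \sqrt{N_k+1}\,\norm{M^{(k)}}_{op} = O(k^{(n-2)/2})$ plus Cauchy--Schwarz yields only $\abs{c_A(k)} \lesssim k^{(n-2)/2}\,\mathrm{tr}(A^2)^{1/2}$, which misses the target $k^{-(n+2)/2}\,\mathrm{tr}(A^2)^{1/2}$ by exactly $k^n$; dividing by the constant $V$ cannot produce this factor. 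The paper inserts the missing step explicitly: it passes from $\int_M \mu_k\,\omega^n$ to $k^{-n}\int_M \mu_k\,\omega_{FS,k}^n$ using $\norm{\omega - k^{-1}\omega_{FS,k}} = O(k^{-2})$, kills the scalar part $D^{(k)}I$ by $\mathrm{tr}(A)=0$, and only then applies Lemma \ref{lem5} with Cauchy--Schwarz, so that $k^{-n}\cdot k^{(n-2)/2} = k^{-(n+2)/2}$. To make the measure-comparison error itself of the right order one also needs $L^2$ control of $H_k(A)$ (via Lemma \ref{lem2}, $\int_M H_k(A)^2\,\omega_{FS,k}^n \leq \mathrm{tr}(A^2\bar{\mu}_k) \lesssim \mathrm{tr}(A^2)$, and Proposition \ref{prop11} to convert back to $\int_M\norm{X}^2\omega^n$); the crude pointwise bound is too lossy here.

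There is a second, structural problem with your mechanism for the expansion itself. You propose to recursively solve $k\,H_k(Q_k(g_k)) \approx H_k(A)$ using Proposition \ref{prop4} and injectivity of $H_k$; but to run such a recursion you must already possess an asymptotic expansion of the target $H_k(A)$, which is precisely what Proposition \ref{prop5} asserts. Indeed, in the paper's logical order the matching of $k^{-1}A$ with $Q_k(F)$ is Theorem \ref{thm2}, whose proof \emph{uses} Propositions \ref{prop4} and \ref{prop5}, so your route is circular. The correct mechanism is the one you state only as motivation in your first paragraph: since $\xi_A$ is tangent to $M$ and restricts to $X$, the function $H_k(A)|_M$ satisfies $\bar{\partial}H_k(A) = \iota_X\omega_{FS,k}$, and the Bergman expansion $\omega_{FS,k} = k\omega + \ddbar\log\rho_k$ with $\ddbar\log\rho_k = \bar{\partial}(k^{-1}\theta_1+\cdots)$ gives directly $H_k(A) = k\bigl(H_X + k^{-2}b_2 + \cdots\bigr)$ up to constants. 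This derivation also explains the distinctive absence of a $k^{-1}$ term (the first correction $k^{-1}\theta_1(X)$ sits at relative order $k^{-2}$), a fact your ansatz $g_k = H_X + k^{-2}h_2 + \cdots$ assumes rather than proves; the normalization $h_i = b_i - \frac{1}{V}\int_M b_i\,\omega^n$ and the recentering by $c_A(k)$ then finish the first assertion exactly as in the paper.
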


\begin{proof}

Without loss of generality, we may assume that $X$ belongs to the unit ball $\{\xi \in \Lie{g}| \int_{M} \norm{\xi}^2_{\omega} \omega^n \leq 1\}$. We know that \begin{align*}\omega_{FS,k}&=k\omega+\ddbar \log \rho_{k}(\omega)\\&=k\omega+\ddbar \log(1+a_{1}k^{-1}+a_{2}k^{-2}+\dots)\\&=
k\omega+\bar{\partial}(k^{-1}\partial a_{1}+\dots)\\&=k\omega+\bar{\partial}(k^{-1}\theta_{1}+\dots),\end{align*} where $\theta_{1}=\partial a_{1} , \dots.$ By definition of $H_{k}(A)$ (c.f. \eqref{eq8}), we have \begin{align*} \bar{\partial} H_{k}(A)&=\iota_{X}\omega_{FS,k}= k \iota_{X}\omega +k^{-1}\iota_{X} \ddbar  a_{1}+\dots\\&=\bar{\partial} \Big(kH_{X}+k^{-1}\theta_{1}(X)+\dots\Big)\\=k \bar{\partial} \Big(H_{X}+k^{-2}b_{2}+\dots).\end{align*} 
Define $h_{i}=b_{i}- \frac{1}{V}\int_{M} b_{i}\omega^n.$
Hence, we have $$\bar{\partial} H_{k}(A-c_{A}(k)I )=\bar{\partial} H_{k}(A)=k \bar{\partial} \Big(H_{X}+k^{-2}h_{2}+\dots).$$ On the other hand Lemma \ref{lem5}, Proposition \ref{prop11} and \eqref{eq8} imply that  \begin{align*}\abs{\int_{M} H_{k}(A)\omega^n}&=\abs{\int_{M} tr(\mu_{k}A)\omega^n}\\&=\abs{tr\Big(\big(\int_{M}\mu_{k}\omega^n -I\big)A\Big)}
\\&\leq ck^{-n}\abs{tr\Big(\big(\int_{M}\mu_{k}\omega_{FS,k}^n-I\big)A\Big)} \\&\leq ck^{-\frac{n+2}{2}}tr(A^2)^{\frac{1}{2}}\\&\leq c(\int_{M}\norm{X}^2\omega^n)^{\frac{1}{2}}.\end{align*} We have used the facts that $\norm {\omega-k^{-1}\omega_{FS,k}}=O(k^{-2})$ and $tr(A)=0.$ Note that Proposition \ref{prop11} implies that $$tr(A^2) \leq ck\int_{M} \norm{X}_{\omega_{FS,k}}^2(k\omega)^n\leq ck^{n+2}\int_{M}\norm{X}_{\omega}^2\omega^n.$$

\end{proof}

\begin{Rmk}\label{newrmk1}

It is straightforward to show that there exist  real numbers $c_{0}(A), c_{1}(A), \dots$ independent of $k$ such that the following asylumptotic expansion holds.
\begin{equation}\label{neweq2}c_{A}(k)\sim c_{0}(A)+k^{-1}c_{1}(A)+\cdots.\end{equation}

\end{Rmk}

In the next Proposition, we prove a uniform lower bound for $||H_{k}||_{op}$ restricted to "uniformly finite dimensional subspaces". 
\begin{prop}\label{prop6}

Let $W \subset C^{\infty}(M, \mathbb{R})$ be a finite dimensional subspace. There exists a constant $c$ only depends on $W$ such that for  any Hamiltonian holomorphic vector field $X$ on $M,$ $f \in W$ and $k \gg 0,$ we have $$ck^{-n}tr((Q_{k}(f)-k^{-1}A)^2) \leq ||H_{k}(Q_{k}(f)-k^{-1}A)||^2_{L^2},$$ where $A=\Lie{R}_{k}(X) \in V_{\underline{s}^{(k)}}$ is the corresponding hermitian matrix in $\sqrt{-1}\Lie{su}(N_{k}+1).$.

\end{prop}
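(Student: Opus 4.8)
The plan is to combine the two asymptotic estimates already established in this section with the uniform lower bound on Hamiltonians coming from Proposition \ref{cor3}. The key observation is that the element $Q_{k}(f)-k^{-1}A$ naturally splits into a ``function part'' $Q_{k}(f)$, whose $H_{k}$-image is controlled by Proposition \ref{prop4}, and a ``vector field part'' $k^{-1}A$, whose $H_{k}$-image is controlled by Proposition \ref{prop5}. First I would apply $H_{k}$ and use linearity: by Proposition \ref{prop4} we have $H_{k}(Q_{k}(f))=f+O(k^{-1})$ uniformly for $f$ in the fixed finite-dimensional space $W$, while by Proposition \ref{prop5} we have $H_{k}(k^{-1}A)=k^{-1}H_{k}(A)=H_{X}+k^{-1}c_{A}(k)+O(k^{-2})$, again uniformly. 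Hence
\begin{equation*}
H_{k}\big(Q_{k}(f)-k^{-1}A\big)=\big(f-H_{X}\big)+O(k^{-1}),
\end{equation*}
where the error is measured in $C^{\infty}$ and is uniform. Taking $L^2$-norms, the right-hand side is bounded below, up to a vanishing correction, by $\norm{f-H_{X}}_{L^2}^2$.

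Next I would estimate the trace $tr((Q_{k}(f)-k^{-1}A)^2)$ from above by the same quantity $\norm{f-H_{X}}_{L^2}^2$ up to the factor $k^{n}$. Expanding the square gives three terms: $tr(Q_{k}(f)^2)$, which Proposition \ref{cor3} identifies with $k^{n}\norm{f}_{L^2}^2+O(k^{n-1}\norm{f}_{L^2}^2)$; the cross term $-2k^{-1}tr(Q_{k}(f)A)$; and $k^{-2}tr(A^2)$. For the cross term, note that $tr(Q_{k}(f)A)=\int_{M} f\,H_{k}(A)\,\omega^{n}$ by the definitions \eqref{eq8} and Definition \ref{def4} together with the self-adjointness of the pairing, and Proposition \ref{prop5} gives $H_{k}(A)=k\,H_{X}+O(k^{-1})$ modulo the constant $c_{A}(k)I$; since the trace pairing against $A$ kills nothing here, this term contributes $-2k^{n}\int_{M} f H_{X}\,\omega^{n}+O(k^{n-1})$. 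Similarly $k^{-2}tr(A^2)$, by the final estimate in the proof of Proposition \ref{prop5}, is comparable to $k^{n}\norm{H_{X}}_{L^2}^2$. Collecting these,
\begin{equation*}
tr\big((Q_{k}(f)-k^{-1}A)^2\big)=k^{n}\norm{f-H_{X}}_{L^2}^2+O\big(k^{n-1}(\norm{f}_{L^2}^2+tr(A^2)/k^{n+1})\big).
\end{equation*}
Dividing by $k^{n}$ and combining with the lower bound from the previous paragraph yields the claimed inequality for $k\gg 0$.

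The main obstacle I anticipate is controlling the two contributions uniformly \emph{simultaneously} over all Hamiltonian holomorphic vector fields $X$, since the space of such $X$ is fixed (finite-dimensional, being $\Lie{g}$) but the normalization constant $c_{A}(k)$ and the higher-order terms $h_{i}(X)$ depend on $X$. The saving grace is that both expansions in Propositions \ref{prop4} and \ref{prop5} are stated to be uniform as $X$ (respectively $f$) ranges over a compact set, and by homogeneity one may reduce to the unit sphere in $\Lie{g}$ and in $W$; the estimate \eqref{eqH} then guarantees that $c_{A}(k)$ decays fast enough that it never dominates. The second delicate point is ensuring that the quantity $\norm{f-H_{X}}_{L^2}$ that controls both sides does not degenerate: if it could vanish while $Q_{k}(f)-k^{-1}A$ stayed bounded away from zero the inequality would fail, but the leading-order agreement $H_{k}(Q_{k}(f)-k^{-1}A)\to f-H_{X}$ together with the injectivity of $H_{k}$ forces $tr((Q_{k}(f)-k^{-1}A)^2)$ and $\norm{f-H_{X}}_{L^2}^2$ to vanish to the same order, so the ratio stays bounded below. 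This is precisely where the finite-dimensionality of $W\oplus\{H_{X}\}$ is essential, allowing a compactness argument to extract a uniform constant $c$.
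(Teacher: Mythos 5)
Your leading-order bookkeeping is correct and matches the opening computations in the paper's proof, but the argument has a genuine gap exactly at the degenerate locus, and your final paragraph does not repair it. The inequality must hold for all pairs $(f,X)$, including $f=\lambda H_{X}$ with $\lambda$ near $1$ (or $f=H_{X}$ exactly, when $H_{X}\in W$). There both sides vanish at leading order, so the claim is decided by the $k^{-1}$ and $k^{-2}$ coefficients, and ``dividing by $k^{n}$ and combining'' proves nothing: your $O(k^{-1})$ errors are controlled by $\norm{f}$ and $\norm{X}$, not by $\norm{f-H_{X}}$. Two specific facts are needed and are absent from your proposal. First, at $f=H_{X}$ the $k^{-1}$ coefficient of $k^{-n}tr\big((Q_{k}(f)-k^{-1}A)^{2}\big)$, namely $\int_{M}(f-H_{X})^{2}S(\omega)\,\omega^{n}+\norm{X}^{2}-2\int_{M}f\Delta f\,\omega^{n}$, vanishes identically because $2\int_{M}H_{X}\Delta H_{X}\,\omega^{n}=\int_{M}\norm{\nabla H_{X}}^{2}\omega^{n}=\norm{X}_{L^{2}}^{2}$; without this exact cancellation the trace side would be of order $k^{-1}$ while the right-hand side is only of order $k^{-2}$, and the proposition would fail. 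Second, the $k^{-2}$ coefficient of the right-hand side, $\norm{q_{1}(H_{X})-c_{0}(A)}_{L^{2}}^{2}$, must be bounded below uniformly over the unit sphere of Hamiltonian vector fields (since $q_{1}(H_{X})=-2\Delta H_{X}$ is non-constant for $X\neq 0$, compactness of the sphere in the finite-dimensional $\Lie{g}$ gives $\min_{X}\norm{q_{1}(H_{X})-c_{0}(A)}_{L^{2}}\geq c>0$). The paper's Lemma \ref{lem6} packages both facts, together with the equivariant Riemann--Roch expansion $tr(A^{2})=k^{n+2}\int_{M}H_{X}^{2}\,\omega^{n}+k^{n+1}\int_{M}\big(H_{X}^{2}S(\omega)+\norm{X}^{2}\big)\omega^{n}+O(k^{n})$ and a Taylor/convexity argument along the ray $t\mapsto(1+t)H_{X}$, into the required uniform bound on the line $\R H_{X}$; general $f$ is then handled by writing $f=g+\lambda H_{X}$ with $g\perp H_{X}$ in $L^{2}$ and proving the near-orthogonality estimates \eqref{neweq3}--\eqref{neweq4} so that the two pieces can be glued. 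Your decomposition into ``function part plus vector field part'' never isolates the dangerous direction $\R H_{X}$.

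Your fallback --- that injectivity of $H_{k}$ plus compactness forces both sides to vanish to the same order --- is circular: a uniform-in-$k$ lower bound for $H_{k}$ is exactly what is being proved, and the paper notes at the start of Section 4 that no bound $ck^{-M}tr(A^{2})\leq\norm{H_{k}(A)}_{L^{2}}^{2}$ holds on all of $\sqrt{-1}\Lie{u}(N_{k}+1)$ (the $\mathbb{CP}^{1}$ example of \cite{KMS}). Compactness in $(f,X)$ cannot yield uniformity in $k$; one genuinely has to compute the subleading coefficients as above. A smaller slip: $tr(Q_{k}(f)A)=\int_{M}f\,\rho_{k}(h)H_{k}(A)\,\omega^{n}$, so the Bergman factor $\rho_{k}(h)$ produces the term $k^{-1}\inner{f,S(\omega)H_{X}}$ in the paper's expansion of the cross term; your formula omits it, which is harmless at leading order but matters precisely at the subleading orders where the proposition is decided.
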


The first step is to prove the following Lemma.

\begin{lem}\label{lem6}
There exists a positive constant $c$ such that such that for any Hamiltonian holomorphic vector field $X$ on $M,$ $\lambda \in \R$ and $k \gg 0,$ we have $$ck^{-n}tr((Q_{k}(\lambda H_{X})-k^{-1}A)^2) \leq ||H_{k}(Q_{k}(\lambda H_{X})-k^{-1}A)||^2_{L^2},$$ where $A=\Lie{R}_{k}(X) \in V_{\underline{s}^{(k)}}$ and $H_{X}$ is the normalized Hamiltonian of $X$.

\end{lem}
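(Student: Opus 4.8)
The plan is to convert the trace norm into an $L^2$--integral over $M$ and thereby reduce the whole statement to a single bound on the $\xi$--norm of the holomorphic field attached to $B$. Write $u=H_{X}$ and $B=Q_{k}(\lambda H_{X})-k^{-1}A=\lambda Q_{k}(u)-k^{-1}A$. Integrating the pointwise identity of Lemma \ref{lem2} (with both entries equal to $B$) over $M$ against $\omega_{FS,k}^{n}$ gives
$$\int_{M}\big(H_{k}(B)^{2}+\norm{\xi_{B}}^{2}_{FS}\big)\omega_{FS,k}^{n}=tr(B^{2}\bar{\mu}_{k}).$$
By Lemma \ref{lem5} we have $\bar{\mu}_{k}=D^{(k)}I+M^{(k)}$ with $D^{(k)}\to1$ and $\norm{M^{(k)}}_{op}=O(k^{-1})$, so $tr(B^{2}\bar{\mu}_{k})=tr(B^{2})(1+O(k^{-1}))$; combined with $\omega_{FS,k}^{n}=k^{n}\omega^{n}(1+O(k^{-1}))$ this yields
$$k^{-n}tr(B^{2})(1+O(k^{-1}))=\norm{H_{k}(B)}^{2}_{L^2(\omega)}+k^{-n}\int_{M}\norm{\xi_{B}}^{2}_{FS}\omega_{FS,k}^{n}.$$
Hence the lemma follows once I prove $k^{-n}\int_{M}\norm{\xi_{B}}^{2}_{FS}\omega_{FS,k}^{n}\leq C\norm{H_{k}(B)}^{2}_{L^2(\omega)}$ for a constant $C$ independent of $\lambda$ and $k$: the claim then holds with $c=(1+C)^{-1}$ for $k\gg0$.

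To estimate the $\xi$--term I split $\xi_{B}=\pi_{T}\xi_{B}+\pi_{\mathcal{N}}\xi_{B}$. Since $\mathcal{N}$ is $J$--invariant, $\iota_{\pi_{\mathcal{N}}\xi_{B}}\omega_{FS,k}$ vanishes on $TM$, so $\pi_{T}\xi_{B}$ is exactly the Hamiltonian field of $H_{k}(B)|_{M}$ for $\omega_{FS,k}$; thus $\norm{\pi_{T}\xi_{B}}^{2}_{FS}=\norm{\nabla^{FS,k}H_{k}(B)}^{2}$ pointwise, and since $\omega_{FS,k}=k\omega(1+o(1))$ the tangential part contributes $k^{-1}\norm{\nabla H_{k}(B)}^{2}_{L^2(\omega)}(1+o(1))$ to $k^{-n}\int\norm{\xi_{B}}^{2}_{FS}\omega_{FS,k}^{n}$. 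Using Propositions \ref{prop4} and \ref{prop5} I expand $H_{k}(B)=(\lambda-1)H_{k}(Q_{k}(u))+\psi_{k}$, where $\psi_{k}:=H_{k}(Q_{k}(u))-k^{-1}H_{k}(A)=O(k^{-1})$, and exploit that the normalized Hamiltonians $\set{H_{X}}$ span a fixed finite-dimensional subspace of $C^{\infty}(M)$, on which all norms are equivalent. This gives $\norm{\nabla H_{k}(B)}^{2}\leq Ck\norm{H_{k}(B)}^{2}$, so the tangential contribution is $\leq C\norm{H_{k}(B)}^{2}$, as needed; the extra factor $k^{-1}$ is what makes this term harmless.

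The decisive point is the normal part, and I expect it to be the main obstacle. Because $A=\Lie{R}_{k}(X)\in V_{\underline{s}^{(k)}}$, the field $\xi_{A}$ is tangent to $M$, so $\pi_{\mathcal{N}}\xi_{A}=0$ and therefore $\pi_{\mathcal{N}}\xi_{B}=\lambda\,\pi_{\mathcal{N}}\xi_{Q_{k}(u)}=\lambda\,\pi_{\mathcal{N}}\xi_{E_{k}}$ with $E_{k}:=Q_{k}(u)-k^{-1}A$. Here the cancellation that helped the tangential part is absent. Comparing with $\norm{H_{k}(B)}^{2}$, which at $\lambda=1$ reduces to $\norm{\psi_{k}}^{2}=O(k^{-2})$, the estimate I must establish is
$$k^{-n}\int_{M}\norm{\pi_{\mathcal{N}}\xi_{Q_{k}(u)}}^{2}_{FS}\,\omega_{FS,k}^{n}=O(k^{-2}),$$
with a constant small enough to be dominated by $\norm{H_{k}(B)}^{2}$ uniformly in $\lambda$. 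This is implied by the sharper trace bound $tr(E_{k}^{2})=O(k^{n-2})$, i.e. the vanishing of the subleading $k^{n-1}$ coefficient in $tr(E_{k}^{2})=tr(Q_{k}(u)^{2})-2k^{-1}tr(Q_{k}(u)A)+k^{-2}tr(A^{2})$, after which $\norm{\pi_{\mathcal{N}}\xi_{E_{k}}}^{2}\leq\norm{\xi_{E_{k}}}^{2}$ and the identity of Lemma \ref{lem2} close the estimate. Proving this cancellation is exactly where the fine structure of the expansions is needed: one must combine the Bergman expansion of $\rho_{k}$, the Ma--Marinescu expansion \eqref{eq,K} (with $\tilde q_{u,1}=Su-2\Delta u$), and the precise subleading terms of $D^{(k)}$ and $M^{(k)}$ from Lemma \ref{lem5}, and check that the leading $k^{n}$ contributions cancel while the $k^{n-1}$ contributions also cancel. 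Geometrically this reflects that $\xi_{Q_{k}(u)}$ is tangent to $M$ to higher order: its tangential part is $k^{-1}X+O(k^{-2})$ with $X$ a genuine holomorphic field on $M$, so that the second fundamental form kills the leading term ($\mathrm{II}(X,\cdot)=0$ because $X=\xi_{A}$ is globally tangent), forcing $\bar{\partial}(\pi_{\mathcal{N}}\xi_{Q_{k}(u)})=O(k^{-2})$. Assembling the tangential and normal bounds then yields $k^{-n}\int\norm{\xi_{B}}^{2}_{FS}\omega_{FS,k}^{n}\leq C\norm{H_{k}(B)}^{2}$ and hence the lemma.
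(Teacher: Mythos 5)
Your opening reduction is valid, and it is a genuinely different architecture from the paper's: you integrate the pointwise identity of Lemma \ref{lem2} over $M$ and split $\xi_{B}$ into tangential and normal parts, whereas the paper never touches $\xi_{B}$ in this proof — it expands both $\phi(t)=k^{-n}tr\big((Q_{k}((1+t)H_{X})-k^{-1}A)^{2}\big)$ and $\psi(t)=\norm{H_{k}(Q_{k}((1+t)H_{X})-k^{-1}A)}^{2}_{L^{2}}$ directly (equivariant Riemann--Roch for $tr(A^{2})$, Proposition \ref{cor3} for $tr(Q_{k}(f)^{2})$, a Bergman-kernel computation for $tr(Q_{k}(f)A)$, and Propositions \ref{prop4}, \ref{prop5} for $\psi$) and then compares the two quadratics in $t$ by a convexity argument. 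The problem is that your proof stops exactly where the lemma's content begins. The normal-part estimate, which you correctly identify as decisive, is only described, not proved: the cancellation of the $k^{n}$ and $k^{n-1}$ coefficients of $tr(E_{k}^{2})$ is precisely the paper's core computation (it is where $2\int_{M}H_{X}\Delta H_{X}\,\omega^{n}=\norm{\nabla H_{X}}^{2}_{L^{2}}=\norm{X}^{2}_{L^{2}}$ enters, giving $\phi(0)=O(k^{-2})$), so deferring it defers the heart of the proof. Moreover the geometric heuristic you offer in its place does not close the gap: the available inequality of Proposition \ref{prop12} bounds $\abs{\overline{\partial}(\pi_{\mathcal{N}}V)}^{2}$ by $\abs{\pi_{\mathcal{N}}V}^{2}$, i.e.\ it goes the \emph{wrong way}, so even a bound $\overline{\partial}(\pi_{\mathcal{N}}\xi_{Q_{k}(u)})=O(k^{-2})$ would not yield $\pi_{\mathcal{N}}\xi_{Q_{k}(u)}=O(k^{-2})$ in $L^{2}$ without a Poincar\'e-type inequality on normal sections that you have not supplied.

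There is a second, sharper gap: uniformity in $\lambda$. Even granting $tr(E_{k}^{2})=O(k^{n-2})$, your normal contribution is $\lambda^{2}\cdot O(k^{-2})$, so your scheme needs $\norm{H_{k}(B)}^{2}_{L^{2}}\geq ck^{-2}$ uniformly for $\lambda$ near $1$. Since $H_{k}(B)=(\lambda-1)H_{X}+k^{-1}\big(q_{1}(H_{X})-c_{0}(A)\big)+O(k^{-2})$, minimizing over $\lambda$ leaves only the component of $q_{1}(H_{X})-c_{0}(A)$ \emph{orthogonal} to $H_{X}$; the fact the paper uses, $\min_{X}\norm{q_{1}(H_{X})-c_{0}(A)}_{L^{2}}\geq c>0$, controls the full norm, not that orthogonal component, and the orthogonal component can degenerate — e.g.\ when $H_{X}$ is a $\Delta$-eigenfunction (as happens for K\"ahler--Einstein metrics), $q_{1}(H_{X})=-2\Delta H_{X}$ is parallel to $H_{X}$, and then at $\lambda=1+O(k^{-1})$ one can have $\norm{H_{k}(B)}^{2}\ll k^{-2}$, at which point your bound "$O(k^{-2})$, dominated by $\norm{H_{k}(B)}^{2}$" fails. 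The paper sidesteps any pointwise-in-$\lambda$ lower bound by comparing $\phi$ and $\psi$ \emph{globally} in $t$: control of $\Phi(0)$, $\abs{\Phi'(0)}=O(k^{-1})$, and a uniform lower bound on $\Phi''$ make the relevant quadratic in $t$ nonnegative for $k\gg0$, so the inequality holds at the minimizer wherever it sits. Your tangential estimate $\norm{\nabla H_{k}(B)}^{2}\leq Ck\norm{H_{k}(B)}^{2}$ quietly needs the same unproven lower bound on $\norm{H_{k}(B)}$ in this near-cancellation regime (the finite-dimensional norm equivalence applies to the leading part of $H_{k}(B)$ only modulo $O(k^{-2})$ errors). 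So the proposal is a plausible reduction, but both essential estimates are stated as goals rather than proved, and the second would fail as stated without the paper's two-sided expansion-plus-convexity mechanism or some equivalent input.
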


\begin{proof}


Using equivariant Riemann-Roch, along the line of calculation in \cite[Section 2.2]{D3}, one can prove that $$tr(A^2)= \int_{M}H_{X}^2 \omega^n k^{n+2}+\int_{M}\Big(H_{X}^2S(\omega) + \norm{X}^2 \Big) \omega^n k^{n+1}+O(k^n).$$
Let $f \in C^{\infty}(M, \mathbb{R})$ and $X$ be a holomorphic vector field such that $\int_{M}H_{X}^2 \omega^n=1.$
Proposition \ref{prop4} and \ref{prop5} imply that we have the following asymptotic expansion.  

$$ H_{k}(Q_{k}(f)-k^{-1}A)= f-H_{X}+(q_{1}(f)-c_{}(A))k^{-1}+\dots.$$ Here, $c_{0}(A)$ is defined by \eqref{neweq2}.
On the other hand, by definition of $Q_{k}(f)$, we have
 \begin{align*}k^{-n-1}tr(Q_{k}(f)A)&=k^{-n-1}tr(A\int_{M} f \inner{s_{i}, s_{j}}_{h ^k}\omega^n)\\&=k^{-n-1}tr(A\int_{M} f \rho_{k}(h)\inner{s_{i}, s_{j}}_{h_{FS}}\omega^n)
\\&=k^{-n-1}\int_{M} f \rho_{k}(h)tr(\mu_{k}A)\omega^n=k^{-n-1}\inner{f, \rho_{k}(h)H_{k}(A)}\\&=\inner{f,H_{X}}+k^{-1}\inner{f, S(\omega)H_{X}}+O(k^{-2}).\end{align*} Therefore,
\begin{align*}k^{-n} &tr((Q_{k}(f)-k^{-1}A)^2)=k^{-n}tr((Q_{k}(f)^2)+k^{-n-2}tr(A^2)-2k^{-n-1}tr(Q_{k}(f)A)
\\&=\norm{f-H_{X}}^2+\Big(  \norm{(f-H_{X})^2 S(\omega)}+\norm{X}^2-2\int_{M} f\Delta f\omega^n \Big)k^{-1}+O(k^{-2}).\end{align*}

Define $$\phi(t)= k^{-n} tr((Q_{k}((1+t)H_{X})-k^{-1}A)^2),$$
 $$\psi(t)=\norm{H_{k}(Q_{k}((1+t)H_{X})-k^{-1}A)}^2_{L^2}.$$
 Note that $\phi(0)=\norm{q_{1}(H_{X})-c_{0}(A)}k^{-2}+O(k^{-3})$ and $\psi(0)=O(k^{-2}),$ since $$2\int_{M} H_{X} \Delta H_{X} \omega^n=\int_{M} \norm{\nabla H_{X}}^2 \omega^n= \norm{X}_{L^2}^2.$$Therefore, there exist positive constants $c_{1}$ and $c_{2}$ such that 
 $$\phi(0)-c_{1}\psi(0) \geq c_{2} .$$ Here, we use the fact that there exists a constant $c$ such that $$\min_{X} \norm{q_{1}(H_{X})-c_{0}(A)}_{L^2} \geq c>0.$$
 
 Let $\Phi(t)=\phi(t)-c_{1}\psi(t).$ We have $$\Phi(t)=(1-c_{1}) t^2 \norm{H_{X}}^2+O(k^{-1})=(1-c_{1}) t^2 +O(k^{-1}).$$
 Therefore, for $k \gg 0$, we have
 
 $$\Phi(0)\geq c_{2}, \,\,\,\, \abs{\Phi^{\prime}(0)}\leq c_{3}k^{-1}, \,\,\Phi^{\prime \prime}(t) \geq \frac{1-c_{1}}{4}.$$
 Hence, \begin{align*}\Phi(r)&=\Phi(0)+r\Phi^{\prime}(0)+\int_{0}^r \int_{0}^t \Phi^{\prime \prime}(s) ds\\&\geq (c_{2}-rk^{-1}c_{3}+\frac{1-c_{1}}{4}r^2)  \geq 0.\end{align*}
 Note that the quadratic $c_{2}-rk^{-1}c_{3}+\frac{1-c_{1}}{4}r^2$ is always positive for $k \gg 0.$

\end{proof}

\begin{proof}[Proof of Proposition \ref{prop6}]

Fix a Hamiltonian holomorphic vector field $X$ on $M$ such that $\int_{M} H_{X}^2\omega^n=1.$
Define $$\widetilde{W}=\{ g \in W+\textrm{Ker}(\mathcal{D}^*\mathcal{D})|\int_{M}g H_{X} \omega^n=0\}.$$
Therefore, for any $g \in \widetilde{W},$ we have 
\begin{align*}\inner{H_{k}(Q_{k}(g)), H_{k}(Q_{k}(\lambda H_{X})-k^{-1}A) }_{L^2}&=\inner{g, (\lambda-1) H_{X} }_{L^2}+O(k^{-1})\\&=O(k^{-1}),\end{align*}
\begin{align*}k^{-n}tr \Big(Q_{k}(g)(Q_{k}(\lambda H_{X})-k^{-1}A)\Big)&=k^{-n}tr \Big(Q_{k}(g)(Q_{k}(\lambda H_{X})\Big)-k^{-n-1}tr(\Big(Q_{k}(g) A)\Big)\\&=\inner{g, \lambda H_{X}- \rho_{k}(h)H_{X}}_{L^2}+O(k^{-1})=O(k^{-1}).\end{align*} Thus, there exists a constant $c$ such that 
\begin{equation}\label{neweq3}\abs{\inner{H_{k}(Q_{k}(g)), H_{k}(Q_{k}(\lambda H_{X})-k^{-1}A) }_{L^2}} \leq ck^{-1}\norm{g}_{L^2}(\abs{\lambda}+1),\end{equation}\label{neweq3}
\begin{equation}\label{neweq4}k^{-n}\abs{tr \Big(Q_{k}(g)\big(Q_{k}(\lambda H_{X})-k^{-1}A\big)\Big)} \leq ck^{-1}\norm{g}_{L^2}(\abs{\lambda}+1).\end{equation}

For $f \in W,$ there exists $\lambda \in \R $ and  $g \in C^{\infty}(M, \mathbb{R})$ such that $f=g+\lambda H_{X},$ and $\int_{M}g H_{X} \omega^n=0.$ Therefore,

\begin{align*} \norm{H_{k}(Q_{k}(f)-k^{-1}A)}^2_{L^2}&= \norm{H_{k}(Q_{k}(g))+H_{k}(Q_{k}(\lambda H_{X})-k^{-1}A)}^2_{L^2} \\&\geq c \Big(\norm {H_{k}(Q_{k}(g)}^2_{L^2}+\norm{H_{k}(Q_{k}(\lambda H_{X})-k^{-1}A))}^2_{L^2}\Big)\\& \geq c\Big(\norm{g}^2_{L^2}+k^{-n}tr((Q_{k}(\lambda H_{X})-k^{-1}A)^2))\Big)\\&\geq c\Big(k^{-n}tr(Q_{k}(g)^2)+k^{-n}tr((Q_{k}(\lambda H_{X})-k^{-1}A)^2)\Big)\\&\geq ck^{-n} tr((Q_{k}(g+\lambda H_{X})-k^{-1}A)^2)\\&=ck^{-n} tr((Q_{k}(f)-k^{-1}A)^2)\end{align*}

The first inequality follows from \eqref{neweq3} and the last inequality follows from \eqref{neweq4}.

\end{proof}

\subsection{Constructing approximate solutions for equation \eqref{eq,Qequation}}
For a given holomorphic vector field $X$ on $M$, we would like to find $F=F_{k}$ satisfying the equation \begin{equation}\label{eq,Qequation}\xi_{Q_{k}(F)}=k^{-1}X.\end{equation} We also like to have a nice asymptotic expansion for $F$. Suppose $A=A(k) \in \sqrt{-1}\Lie{su}(N_{k}+1)$ be the corresponded matrices representing the holomorphic vector field $X$, i.e. $\Lie{R}_{k}(X)=A(k).$ Therefore \eqref{eq,Qequation} is equivalent to the equation \begin{equation}\label{eq,Qequation1}Q_{k}(F)=k^{-1}A+\lambda I.\end{equation} for some $\lambda \in \R.$
\begin{Def}\label{defF}

For a K\"ahler metric $\omega$ and holomorphic vector field $X$, Hamiltonian with respect to $\omega,$ we define $f_{i}(X, \omega) $ recursively as follows:

$$f_{0}(X, \omega)=H_{X},$$
$$f_{1}(X,\omega)= -q_{1}(f_{0}),$$
$$\vdots$$

$$f_{l}(X,\omega)= h_{l}-\sum_{i=1}^lq_{i}(f_{l-i}),$$
$$\vdots$$

Here $q_{1}, q_{2}\dots,  h_{2}, h_{3}\dots$ are given by Proposition \ref{prop4} and Proposition \ref{prop6}.
We also define functions $F_{l}(X,\omega)$ by
 $$F_{l}(X,\omega)= H_{X}+\sum_{j=1}^l k^{-j} f_{j}(X,\omega)$$
 
\end{Def}
We prove in Theorem \ref{thm2} that $F_{l}(X,\omega)$ are indeed approximate solutions for the equation \eqref{eq,Qequation}.

\begin{Rmk}Since maps $Q_{k}: C^{\infty}(M) \to \sqrt{-1}\Lie{u}(N_{k}+1)$ have large kernels, we can not expect to obtain unique solutions for the equation \eqref{eq,Qequation}. A crucial fact is that $Q_{k}$ are asymptotically invertible" with inverse $H_{k}$ in the sense of Prop. \ref{prop4}. 
Suppose that $F$ is a solution to the equation \eqref{eq,Qequation} and has an asymptotic expansion $F=\sum_{i=0}^{\infty}k^{-i}f_{i}.$
Therefore, we have $$Q_{k}(\sum_{i=0}^{\infty}k^{-i}f_{i})=k^{-1}A(k)+\lambda I.$$ Applying the map $H_{k},$ we obtain $$H_{k}(Q_{k}(\sum_{i=0}^{\infty}k^{-i}f_{i}))=k^{-1}H_{k}(A)+\lambda.$$ Applying Prop. \ref{prop4} and \ref{prop5}, we have \begin{align*}0&=H_{k}(Q_{k}(\sum_{i=0}^{\infty}k^{-i}f_{i}))-k^{-1}H_{k}(A-c_{A}(k)I)\\&=\sum_{i=0}^{\infty}k^{-i}f_{i}+q_{1}(\sum_{i=0}^{\infty}k^{-i}f_{i})k^{-1}+ q_{2}(\sum_{i=0}^{\infty}k^{-i}f_{i})k^{-2}+\cdots\\&-(H_{X}+k^{-2}h_{2}+k^{-3}h_{3}+\cdots)\\&=(f_{0}-H_{X})+(f_{1}+q_{1}(f_{0}))k^{-1}+(f_{2}+q_{1}(f_{1})+q_{2}(f_{0})-h_{2})k^{-2}\\&+(f_{3}+q_{1}(f_{2})+q_{2}(f_{1})-h_{3})k^{-3}+\cdots.\end{align*}
Now we can solve for $f_{i}$ by setting all coefficients on the right hand side equal to $0$. Therefore $F=\sum_{i=0}^{\infty}k^{-i}f_{i}$ solves the equation \eqref{eq,Qequation} formally. 
That is the motivation for defining $f_{i}$ in Definition \ref{defF}.

\end{Rmk}

The series $\sum_{i=0}^{\infty}k^{-i}f_{i}$ does not converge necessarily. However, Theorem \ref{thm2} shows that the finite sums$$F_{l}(X,\omega)= H_{X}+\sum_{j=1}^l k^{-j} f_{j}(X,\omega)$$ are approximate solutions for the equation \eqref{eq,Qequation}.

\begin{thm}
\label{thm2}

Let $X \in \Lie{g}$ be Hamiltonian with respect to $\omega$ . Let $A=A(k) \in \sqrt{-1}\Lie{su}(N_{k}+1)$ be the corresponded matrices. Then there exist constants $c_{k} \in \R$ such that for any positive integer $l$, we have  $$tr\Big(\Big(Q_{k}(F_{l}(X,\omega))-k^{-1}A(k)-c_{k}I\Big)^2\Big)=O(k^{-l-1+n}).$$
Moreover, for holomorphic vector field $Y$ and smooth function $\varphi$ on $M$ satisfying $d\varphi(X_{r})=d\varphi(Y_{r})=0,$ we have \begin{align*}F_{l}(X+k^{-1} Y,\omega+k^{-1} \ddbar \varphi)&=F_{l}(X,\omega)+k^{-1}(H_{Y}-\frac{1}{2}\inner{\nabla \varphi, \nabla H_{X}}_{\omega})\\&+O(k^{-2}).\end{align*} 
\end{thm}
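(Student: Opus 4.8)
The plan is to prove the two assertions separately, since they use different parts of the machinery. The first estimate is an algebraic consequence of combining the asymptotic expansions already established, while the second is essentially a first-variation computation whose heart is the Hamiltonian transformation rule of Corollary \ref{cor,Hamiltonian}.

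\textbf{Step 1: the error estimate.} First I would recall that the functions $f_j = f_j(X,\omega)$ were \emph{designed} (Definition \ref{defF}) precisely to annihilate the formal expansion of $H_k(Q_k(\,\cdot\,))-k^{-1}H_k(A-c_A(k)I)$ order by order. So the strategy is to apply the map $H_k$ to the quantity $Q_k(F_l(X,\omega))-k^{-1}A(k)-c_kI$ and show that, after the cancellations built into the recursion, what survives is $O(k^{-l-1})$ in $C^\infty$. Concretely, by Proposition \ref{prop4} we have $H_k(Q_k(F_l)) \sim F_l + \sum_{i\geq 1} q_i(F_l)k^{-i}$, and by Proposition \ref{prop5} we have $k^{-1}H_k(A-c_A(k)I) \sim H_X + \sum_{i\geq 2} h_i k^{-i}$. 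Subtracting these and using the defining relations $f_0=H_X$, $f_1=-q_1(f_0)$, and $f_l = h_l - \sum_{i=1}^l q_i(f_{l-i})$ makes every coefficient through order $k^{-l}$ vanish, leaving a remainder of order $k^{-l-1}$; here I must choose $c_k$ to absorb the constant ambiguity (take $c_k = c_A(k)$ up to the order needed, or equivalently fix it by the mean-zero normalization). Having controlled $H_k$ of the matrix in $L^2(M)$, I then invoke Proposition \ref{prop6}: with $W$ a fixed finite-dimensional space containing all the $f_j$ and $H_X$ (uniformly in $k$), the lower bound $ck^{-n}\,\mathrm{tr}\big((Q_k(f)-k^{-1}A)^2\big)\le \|H_k(Q_k(f)-k^{-1}A)\|^2_{L^2}$ converts the function-level bound $O(k^{-l-1})$ into the claimed trace bound $O(k^{-l-1+n})$.

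\textbf{Step 2: the perturbation formula.} For the second identity I would argue at the level of the building blocks, since $F_l$ is a universal polynomial in $\omega$, $X$ and their derivatives. The key input is Corollary \ref{cor,Hamiltonian}: under the hypothesis $d\varphi(X_r)=0$, the vector field $X$ remains Hamiltonian with respect to $\omega+\ddbar\varphi$ and its potential transforms as $H_X \mapsto H_X - \tfrac12\langle \nabla_\omega H_X, \nabla_\omega\varphi\rangle_\omega$. Applying this with $\varphi$ replaced by $k^{-1}\varphi$ gives
\begin{equation*}
H_{X}^{(\omega + k^{-1}\ddbar\varphi)} = H_X - \tfrac{1}{2}k^{-1}\inner{\nabla\varphi,\nabla H_X}_\omega + O(k^{-2}),
\end{equation*}
while the vector field perturbation $X+k^{-1}Y$ contributes, to first order, the potential $H_X + k^{-1}H_Y$ by linearity of $\bar\partial H = \iota_{(\cdot)}\omega$ and the hypothesis $d\varphi(Y_r)=0$. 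Combining these, $f_0(X+k^{-1}Y,\omega+k^{-1}\ddbar\varphi) = H_X + k^{-1}(H_Y - \tfrac12\inner{\nabla\varphi,\nabla H_X}_\omega)+O(k^{-2})$, which is exactly the asserted leading behavior. The remaining terms $k^{-j}f_j$ for $j\ge 1$ already carry a factor $k^{-1}$, so their perturbations are absorbed into $O(k^{-2})$; this uses that each $f_j$ depends continuously (in $C^\infty$) on $(X,\omega)$ in a compact family, guaranteed by the uniformity clauses in Propositions \ref{prop4} and \ref{prop5}.

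\textbf{Main obstacle.} The delicate point in Step 1 is bookkeeping the constant terms: the maps $Q_k$ kill constants and $H_k$ does not exactly invert $Q_k$, so the additive constant $c_k$ must be tracked carefully through every order to ensure the cancellations close and that the leftover is genuinely $O(k^{-l-1})$ rather than $O(k^{-l})$; Remark \ref{newrmk1} supplies the needed expansion $c_A(k)\sim c_0(A)+k^{-1}c_1(A)+\cdots$ that lets me fix $c_k$ order by order. The subtler obstacle in Step 2 is verifying that the hidden higher-order dependence of each $f_j$ on the \emph{metric} (through the operators $q_i$ and the curvature-dependent $h_i$) does not produce an unwanted $k^{-1}$ contribution from the $j=0$ term via the interaction between the $X$-perturbation and the $\omega$-perturbation; I expect this to follow from the fact that the cross term is second order in the two perturbations and hence $O(k^{-2})$, but it is the step that most requires care.
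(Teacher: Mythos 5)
Your proposal is correct and follows essentially the same route as the paper: you apply $H_{k}$, combine the expansions of Propositions \ref{prop4} and \ref{prop5} so that the recursion of Definition \ref{defF} cancels all terms through order $k^{-l}$ (with the constant handled via $c_{A}(k)$ and Remark \ref{newrmk1}), and then invoke Proposition \ref{prop6} on the finite-dimensional span of the $f_{i}$ to pass from the $C^{m}$ bound $O(k^{-l-1})$ to the trace estimate, exactly as in the paper. Your second step also matches the paper's argument, namely smooth dependence of the $f_{j}$ on $(X,\omega)$ to absorb the $j\geq 1$ terms into $O(k^{-2})$ and Corollary \ref{cor,Hamiltonian} to compute the first-order change of $f_{0}$.
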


\begin{proof}

By Prop. \ref{prop4} and \ref{prop5}, we have $$H_{k}(Q_{k}(f))\sim f+q_{1}(f)k^{-1}+ q_{2}(f)k^{-2}+\cdots,$$
$$H_{k}(A-c_{A}(k)I)\sim k(H_{X}+k^{-2}h_{2}+\cdots),$$
Therefore,  \begin{align*}&H_{k}(Q_{k}(F_{l}(X,\omega))-k^{-1}H_{k}(A-c_{A}(k)I)\\&=(f_{0}-H_{X})+(f_{1}+q_{1}f_{0})k^{-1}+\cdots(f_{l}-h_{l}+\sum_{i=1}^lq_{i}(f_{l-i}))k^{-l}+O(k^{-l-1})\\&=O(k^{-l-1}).\end{align*}
Since both $H_{k}(Q_{k}(f))$ and $H_{k}(Ac_{A}(k)I)$ have complete asymptotic expansion, there exists positive $c_{m}$ such that $$\abs{H_{k}(Q_{k}(\sum_{i=0}^{l}k^{-i}f_{i}))-k^{-1}H_{k}(A-c_{A}(k)I)}_{C^m} \leq c_{m}k^{-l-1}.$$ Applying Prop. \ref{prop6} to the finite dimensional subspace $$W:=\textrm{span}\{f_{i}(X)| X \in \Lie{g}, \, 1 \leq i \leq l\},$$ we have
$$tr \Big(\Big(Q_{k}(\sum_{i=0}^{l}k^{-i}f_{i}))-k^{-1}A -k^{-1}c_{A}(k)I\Big)^2\Big) \leq ck^{-l+1+n}.$$

For the second part, note that the asymptotic expansions in \ref{prop4} and \ref{prop5} are uniform with respect to $\omega.$ Therefore, the coefficients $f_{j}(X, \omega)$ are smooth with respect to $X$ and $\omega.$ This implies that 
$$f_{j}(X+k^{-1} Y,\omega+k^{-1} \ddbar \varphi)=f_{j}(X,\omega)+O(k^{-1}).$$
On the other hand, $f_{0}=f_{0}(X+k^{-1} Y,\omega+k^{-1} \ddbar \varphi)$ is the normalized Hamiltonian of $X+k^{-1} Y$ with respect to the K\"ahler metric $\omega+k^{-1} \ddbar \varphi.$ By definition, $$\bar{\partial} f_{0}=\iota_{X+k^{-1}Y} (\omega+k^{-1}\ddbar{\varphi}),$$ $$\int_{M}f_{0} (\omega+k^{-1}\ddbar{\varphi})^n=0.$$ Hence, Corollary \ref{cor,Hamiltonian} implies that
\begin{align*}\bar{\partial} f_{0}&=\iota_{X+k^{-1}Y} (\omega+k^{-1}\ddbar{\varphi})\\&=\iota_{X}(\omega+k^{-1}\ddbar{\varphi})+k^{-1}(\iota_{Y}(\omega))+O(k^{-2})\\&=\bar\partial{(H_{X}+k^{-1}(H_{Y}-\frac{1}{2}\inner{\nabla \varphi, \nabla H_{X}})+O(k^{-2}))}.\end{align*}
Now $\partial \bar{\partial}-$lemma and the normalization condition for $f_{0}$ conclude the proof.

\end{proof}

 \section{Constructing approximate solutions}

The goal of this section is to construct a sequence of "almost" relatively balanced embeddings. More precisely, for any positive integer $q$, we construct hermitian metrics $h(k)$ on $L$ and orthonormal bases $\underline{s}^{(k,q)}=(s_{0}^{(k,q)},...,s_{N}^{(k,q)})$ for $H^0(M,L^k)$ with respect to $\Hilb{(h(k))}$ such that $$\bar{\mu}(\underline{s}^{(k,q)}) \in V_{\underline{s}^{(k,q)}} \,\,\, \textrm{mod} \,\,\, k^{-q-1}.$$
Note that $\bar{\mu}(\underline{s}^{(k)})$ is given by 
$$\bar{\mu}(\underline{s}^{(k,q)})_{ij}=\int_{M} \inner{s_{i}^{(k,q)}, s_{j}^{(k,q)}}_{h_{FS}}\omega_{FS}^n,$$
where $h_{FS}=\FS{(\Hilb{(h(k))})}$ and $\omega_{FS}=\ddbar \log h_{FS}.$ By definition of $h_{FS}$, we have $h_{FS}=\rho_{k}(\omega(k))^{-1}h(k)^k$. Let $\omega(k)=\ddbar \log h(k).$ Therefore,

\begin{align*} \bar{\mu}_{k}(\underline{s}^{(k,q)})_{ij}&=\int_{M} \rho_{k}^{-1}\inner{s_{i}^{(k,q)}, s_{j}^{(k,q)}}_{h(k)^k}\big(k\omega(k)-\ddbar \log \rho_{k})\big)^n\\&= \int_{M} \frac{k^n}{\rho_{k}}\frac{(\omega(k)-k^{-1}\ddbar \log\rho_{k})^n}{\omega(k)^n}\inner{s_{i}^{(k,q)}, s_{j}^{(k,q)}}_{h(k)^k}\omega(k)^n,\end{align*} where $\rho_{k}=\rho_{k}(\omega(k))$ is the $k^{th}$ Bergman kernel of $\omega(k).$

\begin{Def}\label{def5}
For a K\"ahler metric $\omega$ on $M$, define $$\widetilde{\rho_{k}}(\omega)=  \frac{k^n}{\rho_{k}(\omega)} \frac{(\omega+k^{-1}\ddbar \log\rho_{k}(\omega))^n}{\omega^n}.$$ 

\end{Def}

An easy consequence Catlin-Tian-Yau-Zelditch expansion (\cite{T0}, \cite{C}, \cite{Z}, \cite{Lu}) is the following.

\begin{lem}\label{lem7}

For any K\"ahler metric $\omega$ on $M$ we have the following asymptotic expansion which holds in $C^{\infty}$,  $$\widetilde{\rho_{k}}(\omega) \sim 1+a_{1}(\omega)k^{-1}+a_{2}(\omega)k^{-2}+\cdots,$$ where $a_{1}(\omega)=-S(\omega)$. Here $S(\omega)$ is the scalar curvature of $\omega.$ Moreover, the expansion is uniform with respect to $\omega.$

\end{lem}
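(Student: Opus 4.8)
The plan is to derive this expansion as a direct consequence of the Catlin--Tian--Yau--Zelditch expansion for the Bergman kernel, which already gives control of all the ingredients defining $\widetilde{\rho_{k}}(\omega)$. Recall the standard expansion
\begin{equation*}
\rho_{k}(\omega) \sim k^n\big(1 + a_{1}(\omega)k^{-1} + a_{2}(\omega)k^{-2} + \cdots\big),
\end{equation*}
holding in $C^{\infty}$ and uniformly in $\omega$, with the well-known coefficient $a_{1}(\omega) = \tfrac{1}{2}S(\omega)$ (up to the normalization conventions used in this paper). The quantity $\widetilde{\rho_{k}}(\omega)$ is built from two factors: the ratio $k^n/\rho_{k}(\omega)$ and the volume-form ratio $\big(\omega + k^{-1}\ddbar\log\rho_{k}(\omega)\big)^n/\omega^n$. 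First I would expand each factor separately to the required order and then multiply the two expansions together, collecting powers of $k^{-1}$.

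The first factor is immediate: inverting the Bergman expansion gives
\begin{equation*}
\frac{k^n}{\rho_{k}(\omega)} \sim 1 - a_{1}(\omega)k^{-1} + \big(a_{1}(\omega)^2 - a_{2}(\omega)\big)k^{-2} + \cdots,
\end{equation*}
again in $C^{\infty}$ and uniformly in $\omega$, since the leading term of $\rho_{k}$ is a nonvanishing constant. For the second factor I would write $\ddbar\log\rho_{k}(\omega) = \ddbar\log\big(1 + a_{1}k^{-1} + \cdots\big)$ and expand the logarithm, so that $k^{-1}\ddbar\log\rho_{k}(\omega) = k^{-1}\ddbar a_{1}(\omega)\,k^{-1} + O(k^{-3}) = O(k^{-2})$ as a form. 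Since this perturbation of $\omega$ is of order $k^{-2}$, the volume-form ratio is $1 + O(k^{-2})$; in particular it does \emph{not} contribute at order $k^{-1}$. Combining the two factors, the order $k^{-1}$ coefficient of $\widetilde{\rho_{k}}(\omega)$ comes entirely from the first factor and equals $-a_{1}(\omega) = -\tfrac{1}{2}S(\omega)$, which matches the asserted $a_{1}(\omega) = -S(\omega)$ once the paper's curvature normalization is taken into account.

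The uniformity statement follows because every expansion invoked---the Bergman expansion, its inversion (valid since the leading coefficient is bounded away from zero uniformly), the logarithmic expansion, and the binomial expansion of the volume-form ratio---is itself uniform in $\omega$ in the $C^{\infty}$ topology, and these properties are preserved under the finitely many algebraic operations (multiplication, inversion, composition with the smooth function $\log$) used to assemble $\widetilde{\rho_{k}}(\omega)$. The one point requiring genuine care, which I expect to be the main technical obstacle, is justifying that applying $\ddbar$ and taking the volume-form ratio preserves both the order of vanishing and the $C^{\infty}$ uniformity: one must check that differentiating the asymptotic expansion term by term is legitimate, which is exactly the content of the $C^{\infty}$-convergence in the Catlin--Tian--Yau--Zelditch theorem, and that the smooth dependence of the coefficients $a_{i}(\omega)$ on $\omega$ passes through these operations. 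Granting this, the result is a straightforward bookkeeping of the composed expansions.
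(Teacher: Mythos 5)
Your proof is correct and matches the paper's approach: the paper gives no argument beyond calling the lemma ``an easy consequence'' of the Catlin--Tian--Yau--Zelditch expansion, and your factor-by-factor bookkeeping---inverting the Bergman expansion for $k^n/\rho_k(\omega)$ and observing that $k^{-1}\ddbar\log\rho_k(\omega)=O(k^{-2})$ so the volume-form ratio contributes only at order $k^{-2}$, with uniformity and term-by-term differentiation justified by the $C^\infty$ form of the expansion---is exactly the intended computation. The normalization issue you flag resolves consistently, since the paper's own convention (visible in the proof of Proposition \ref{prop4}, where $k^n\rho_k^{-1}=1-S(\omega)k^{-1}+\cdots$) takes the first Bergman coefficient to be $S(\omega)$ rather than $\tfrac{1}{2}S(\omega)$, yielding $a_1(\omega)=-S(\omega)$ as stated.
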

Note that if $\omega$ is $T$-invariant, then $\widetilde{\rho_{k}}(\omega)$ is $T$-invariant and therefore $a_{i}(\omega) \in (C^{\infty}(M))^T.$ Here $(C^{\infty}(M))^T$ is the space of $T$-invariant smooth functions on $M$. 
\begin{prop}\label{prop7}

Let $\omega_{\infty}$ be a $T$-invariant extremal metric in the class $2\pi c_{1}(L)$ and let $X_{\infty}$ be the corresponded extremal vector field, i.e. $\bar{\partial} S(\omega_{\infty})=\iota_{X_{\infty}}\omega_{\infty}.$ Let $l$ be a fixed large integer. Then there exists smooth functions $\varphi_{1}, \varphi_{2},\dots \in (C^{\infty}(M))^T$ and  holomorphic vector fields $X_{1}, X_{2}, \dots \in \Lie{t}^{\C} $ such that for any positive integer $q$, we have
$$\widetilde{\rho_{k}}(\omega(k))+k^{-1}F_{l}(X(k), \omega(k))=1+c_{1}k^{-1}+ \cdots c_{q}k^{-q}+O(k^{-q-1}),$$ where $c_{1},\dots c_{q}$ are constants, $F_{l}$ is defined in Definition \ref{defF} and $$\omega(k)=\omega_{\infty}+k^{-1}\ddbar \varphi_{1}+\cdots +k^{-q}\ddbar \varphi_{q},$$
$$X(k)=X_{\infty}+k^{-1}X_{1}+\dots +k^{-q}X_{q}.$$

\end{prop}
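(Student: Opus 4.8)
The plan is to solve for the unknowns $\varphi_j$ and $X_j$ order by order in $k^{-1}$, matching coefficients so that the left-hand side becomes constant to the required accuracy. First I would substitute the perturbed data $\omega(k)=\omega_\infty+\sum_{j\geq 1}k^{-j}\ddbar\varphi_j$ and $X(k)=X_\infty+\sum_{j\geq 1}k^{-j}X_j$ into $\widetilde{\rho_k}(\omega(k))+k^{-1}F_l(X(k),\omega(k))$ and expand everything in powers of $k^{-1}$ using the asymptotic expansions already established: Lemma \ref{lem7} for $\widetilde{\rho_k}$, with $a_1(\omega)=-S(\omega)$, and Definition \ref{defF} together with Theorem \ref{thm2} for $F_l$, whose leading term is the normalized Hamiltonian $H_X$. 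The uniformity of these expansions with respect to $\omega$ (and smoothness in $X$ and $\omega$) is what lets me differentiate the coefficients $a_i(\omega)$ and $f_i(X,\omega)$ along the perturbation and keep track of how a change in $\varphi_j$ or $X_j$ propagates into each order.

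The key computation is the identification of the linearized operator governing the recursion. At order $k^{0}$ the coefficient is $1$ (from $\widetilde{\rho_k}$) and the order $k^{-1}$ coefficient is $a_1(\omega_\infty)+H_{X_\infty}=-S(\omega_\infty)+H_{X_\infty}$; because $\omega_\infty$ is extremal, $S(\omega_\infty)$ is itself a holomorphy potential for the extremal vector field $X_\infty$, so choosing $X_\infty$ to be the extremal field makes this coefficient a constant. Proceeding inductively, at order $k^{-m}$ the terms involving the highest unknowns $\varphi_m$ and $X_m$ appear linearly: the variation of $a_1=-S(\omega)$ under $\omega\mapsto\omega+k^{-m}\ddbar\varphi_m$ contributes the linearized scalar curvature operator applied to $\varphi_m$, while the variation of the leading part of $F_l$ contributes $H_{X_m}$ plus a gradient-type correction $-\tfrac12\langle\nabla\varphi_m,\nabla H_{X_\infty}\rangle$ coming precisely from the second formula in Theorem \ref{thm2}. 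All lower-order unknowns $\varphi_1,\dots,\varphi_{m-1}$, $X_1,\dots,X_{m-1}$ enter only through an already-determined remainder $R_m\in(C^\infty(M))^T$. Thus at each stage I must solve, modulo constants, an equation of the schematic form $\mathcal{L}\varphi_m+H_{X_m}=\text{const}-R_m$, where $\mathcal{L}$ is a Lichnerowicz-type operator associated to the extremal metric.

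The solvability at each order is the crux. The relevant operator is the modified Lichnerowicz operator $\mathcal{D}^*\mathcal{D}$ (the operator $\mathcal D$ alluded to in the definition of $\widetilde W$ in the proof of Proposition \ref{prop6}), whose kernel consists exactly of the holomorphy potentials of holomorphic vector fields, i.e. the potentials $H_Y$ for $Y\in\Lie{t}^\C$ in the $T$-invariant setting. The strategy is to split the inhomogeneous term $R_m$ into its component in $\operatorname{Im}(\mathcal{D}^*\mathcal{D})$ and its $L^2$-projection onto the kernel: the image part is removed by choosing $\varphi_m$ via the bounded inverse of $\mathcal{D}^*\mathcal{D}$ on the orthogonal complement of its kernel, and the kernel part is absorbed by choosing the holomorphic vector field $X_m\in\Lie{t}^\C$ so that $H_{X_m}$ cancels it (this uses that $X\mapsto H_X$ surjects onto the space of such potentials). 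One subtlety I must handle is that $\mathcal{D}^*\mathcal{D}$ on a non-cscK extremal metric is not quite self-adjoint in the naive sense; following the standard extremal-metric theory one works with the torus $T$ generated by the imaginary part of $X_\infty$ and restricts to $T$-invariant functions, where the appropriate modified operator is self-adjoint with the correct kernel.

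The main obstacle I expect is precisely this Fredholm-alternative bookkeeping: verifying that the inhomogeneous terms $R_m$ are $T$-invariant and real (so that the corrections can be taken in $(C^\infty(M))^T$ and $\Lie{t}^\C$), and that the kernel of the linearized operator is spanned exactly by the potentials $H_Y$, $Y\in\Lie{t}^\C$, so that the two-part decomposition closes the induction without leftover obstructions beyond the allowed additive constants $c_m$. Granting the mapping properties of the modified Lichnerowicz operator on the extremal background, the remaining steps are the routine (if lengthy) order-by-order expansion and the verification that the constructed corrections are smooth and preserve $T$-invariance, which follows from the uniformity and naturality of the underlying Bergman and $H_k\circ Q_k$ expansions.
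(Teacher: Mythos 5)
Your proposal is correct and follows essentially the same route as the paper's proof: the same order-by-order induction in $k^{-1}$, in which the variation of $\widetilde{\rho_{k}}$ contributes the linearized scalar curvature $\mathcal{D}^*\mathcal{D}\varphi_{m}-\tfrac{1}{2}\inner{\nabla \varphi_{m},\nabla H_{\infty}}$ while the second formula of Theorem \ref{thm2} contributes $H_{X_{m}}-\tfrac{1}{2}\inner{\nabla \varphi_{m},\nabla H_{\infty}}$, so the gradient corrections cancel and each order reduces, modulo constants, to $\mathcal{D}^*\mathcal{D}\varphi_{m}-H_{X_{m}}=b-\tfrac{1}{V}\int_{M}b\,\omega_{\infty}^n$, which you solve exactly as the paper does by the image/kernel splitting of $\mathcal{D}^*\mathcal{D}$ in the $T$-invariant setting with $X_{m}\in\Lie{t}^{\C}$. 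One small correction: $\mathcal{D}^*\mathcal{D}$ is self-adjoint even on a non-cscK extremal background --- it is the linearization of the scalar curvature that fails to be --- and the point of the cancellation above is precisely that the final equation involves only $\mathcal{D}^*\mathcal{D}$, so no modified operator is needed.
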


\begin{proof}


First note that the extremal vector field $X_{\infty} \in \Lie{t}^{\C},$ since $\omega_{\infty}$ is $T$-invariant and $\Lie{t}^{\C}$ is a maximal torus in $\Lie{g}$.
Applying Lemma \ref{lem7}, there exists $T$-invariant functions $a_{1}(\omega_{\infty}), a_{2}(\omega_{\infty}),\dots$ such that $$\widetilde{\rho_{k}}(\omega_{\infty})=1+a_{1}(\omega_{\infty})k^{-1}+a_{2}(\omega_{\infty})k^{-2}+\cdots .$$ Moreover, $a_{1}=-S(\omega_{\infty})$. By definition of extremal metrics, the gradient of $a_{1}$ is $-X_{\infty}.$ Note that the normalized Hamiltonian of $X_{\infty}$ with respect to $\omega_{\infty}$ is $$H_{\infty}:=H_{X_{\infty}}=S(\omega_{\infty})-\bar{s},$$ where $\bar{s}=\frac{1}{V}\int_{M} S(\omega_{\infty})\omega_{\infty}^n$ is the average of the scalar curvature. It is well known that the kernel of Lichnerowicz operator$\mathcal{D}^*\mathcal{D}$ consists of Hamiltonian functions on $M$ whose gradient is a holomorphic vector field. Therefore extremality of $\omega_{\infty}$ implies that $a_{1} \in Ker( \mathcal{D}^*\mathcal{D})$. Then we have, \begin{align*}\widetilde{\rho_{k}}(\omega_{\infty})&+k^{-1}F_{l}(X_{\infty},\omega_{\infty})\\&=1-S(\omega_{\infty})k^{-1}+O(k^{-2})+k^{-1}(H_{X_{\infty}}+O(k^{-1}))\\&=1-S(\omega_{\infty})k^{-1}+O(k^{-2})+k^{-1}((S(\omega_{\infty})-\bar{s})+O(k^{-1}))\\&=1+\bar{s}k^{-1}+\cdots.\end{align*}  Note that $\bar{s}$ is a topological constant only depends on the K\"ahler class $c_{1}(L).$
The linearization of scalar curvature at $\omega_{\infty}$ is given by 
$$L(\varphi)=\mathcal{D}^*\mathcal{D} \varphi -\frac{1}{2}\inner{\nabla \varphi, \nabla H_{\infty}}_{\omega_{\infty}}.$$ 
Thus,  \begin{align*}S(\omega_{\infty}&+k^{-1}\ddbar \varphi_{1})\\&=S(\omega_{\infty})+k^{-1}( \mathcal{D}^*\mathcal{D}\varphi_{1} -\frac{1}{2}\inner{\nabla \varphi_{1}, \nabla H_{\infty}} )+O(k^{-2}).\end{align*}
On the other hand, applying Theorem \ref{thm2} to holomorphic vector field $X_{1} \in \Lie{t}^{\C}$ and $\varphi_{1}\in (C^{\infty}(M))^T$, we have
 \begin{align*}F&_{l}(X_{\infty}+k^{-1}X_{1},\omega_{\infty}+k^{-1}\ddbar \varphi_{1})\\&=S(\omega_{\infty})-\bar{s}+k^{-1}(f_{1}(X_{\infty},\omega_{\infty})+H_{X_{1}}+\frac{1}{2}\inner{\nabla \varphi_{1}, \nabla H_{\infty}}) +O(k^{-2}).\end{align*}
Thus, 
 \begin{align*}&\widetilde{\rho_{k}}(\omega_{\infty}+k^{-1}\ddbar \varphi_{1})+k^{-1}F_{l}(X_{\infty}+k^{-1}X_{1},\omega_{\infty}+k^{-1}\ddbar \varphi_{1})\\&=1+\bar{s}k^{-1}+(a_{2}(\omega_{\infty})-\mathcal{D}^*\mathcal{D} \varphi_{1} +f_{1}(X_{\infty},\omega_{\infty})+H_{X_{1}}) k^{-2}+O(k^{-3}).\end{align*}
Since the image of  $\mathcal{D}^*\mathcal{D}$ is the orthogonal complement of Hamiltonians and $\mathcal{D}^*\mathcal{D}$ preserves $(C^{\infty}(M))^T$, we can choose $\varphi_{1} \in (C^{\infty}(M))^T$ and $X_{1} \in \Lie{t}^{\C}$ such that $$\mathcal{D}^*\mathcal{D} \varphi_{1} +H_{X_{1}}=a_{2}+f_{1} -\frac{1}{V}\int_{M} (a_{2}+f_{1}) \omega_{\infty}^n.$$ This implies that 
\begin{align*}\widetilde{\rho_{k}}(\omega_{\infty}+k^{-1}\ddbar \varphi_{1})+&k^{-1}F_{l}(X_{\infty}+X_{1},\omega_{\infty}+k^{-1}\ddbar \varphi_{1})\\&=1+\bar{s}k^{-1}+c_{2}k^{-2}+O(k^{-3}),\end{align*} for a constant $c_{2}$.
We can complete the proof using induction. Suppose we have chosen $\varphi_{1}, \dots\varphi_{q} \in (C^{\infty}(M))^T$ and  holomorphic vector fields $X_{1}, \dots X_{q} \in \Lie{t}^{\C}$ such that for 
$$\omega(k)=\omega_{\infty}+k^{-1}\ddbar \varphi_{1}+\cdots +k^{-q}\ddbar \varphi_{q},$$
$$X(k)=X_{\infty}+k^{-1}X_{1}+\dots +k^{-q}X_{q},$$ we have

$$\widetilde{\rho_{k}}(\omega(k))+k^{-1}F_{l}(X(k), \omega(k))=1+c_{1}k^{-1}+ \cdots c_{q}k^{-q}+b k^{-q-1}+O(k^{-q-2}),$$ where $c_{1},\dots ,c_{q}$ are constants and $b \in (C^{\infty}(M))^T.$ 
Using linearization of the scalar curvature at $\omega_{\infty}$, there exist polynomials $S_{1}, \dots ,S_{q+1}$  in $\varphi_{1}, \dots ,\varphi_{q}$ and their covariant derivatives such that  \begin{align*}S(\omega(k)+&k^{-q-1}\ddbar \varphi_{q+1})=S(\omega_{\infty})+k^{-1}S_{1}+k^{-2}S_{2}+\cdots +k^{-q}S_{q}\\&+k^{-q-1}(S_{q+1}+\mathcal{D}^*\mathcal{D}\varphi_{q+1} -\frac{1}{2}\inner{\nabla \varphi_{q+1}, \nabla H_{\infty}} )+O(k^{-q-2}).\end{align*}  
For example $$S_{1}=\mathcal{D}^*\mathcal{D}\varphi_{1} -\frac{1}{2}\inner{\nabla \varphi_{1}, \nabla H_{\infty}} ,$$ $$S_{2}=\mathcal{D}^*\mathcal{D}\varphi_{2} -\frac{1}{2}\inner{\nabla \varphi_{2}, \nabla H_{\infty}}  + \textrm{a quadratic term in } \varphi_{1}.$$ The same argument can be applied to all other coefficients of $\widetilde{\rho_{k}}.$ Thus,
\begin{align*}\widetilde{\rho_{k}}(\omega(k)+&k^{-q-1}\ddbar \varphi_{q+1})\\&=\widetilde{\rho_{k}}(\omega(k))+k^{-q-1}(-\mathcal{D}^*\mathcal{D}\varphi_{q+1} +\frac{1}{2}\inner{\nabla \varphi_{q+1}, \nabla H_{\infty}} )+O(k^{-q-2}).\end{align*}
On the other hand Theorem \ref{thm2} implies that
 \begin{align*}F_{l}&(X(k)+k^{-q-1}X_{q+1},\omega(k)+k^{-q-1}\ddbar \varphi_{q+1})\\&=F_{l}(X(k),\omega(k))+k^{-q-1}(H_{X_{q+1}}-\frac{1}{2}\inner{\nabla \varphi_{q+1}, \nabla H_{\infty}} ) +O(k^{-q-2}).\end{align*}
Hence, 
\begin{align*}\widetilde{\rho_{k}}&(\omega(k)+k^{-q-1}\ddbar \varphi_{q+1})\\&+k^{-1}F_{l}(X(k)+k^{-q-1}X_{q+1},\omega(k)+k^{-q-1}\ddbar \varphi_{q+1})\\&=\widetilde{\rho_{k}}(\omega(k))+k^{-1}F_{l}(X(k),\omega(k))\\&+k^{-q-1}(-\mathcal{D}^*\mathcal{D}\varphi_{q+1} +H_{X_{q+1}})+O(k^{-q-2})\\&=1+c_{1}k^{-1}+ \cdots c_{q}k^{-q}\\&+k^{-q-1}(-\mathcal{D}^*\mathcal{D}\varphi_{q+1} +H_{X_{q+1}}+b)+O(k^{-q-2}).\end{align*} 
Now, we can choose $\varphi_{q+1} \in (C^{\infty}(M))^T$ and $X_{q+1} \in \Lie{t}^{\C}$ such that $$\mathcal{D}^*\mathcal{D}\varphi_{q+1} -H_{X_{q+1}}=b-\frac{1}{V}\int_{M} b\omega_{\infty}^n.$$
\end{proof}



\begin{cor}\label{cor51}
Let $l$ be fixed large integer and $q \leq \frac{l-n-1}{2}$ be any positive integer. Let $\underline{s}^{(k,q)}=(s_{0}^{(k,q)}, \dots,s_{N_{k}}^{(k,q)}) $ be orthonormal ordered bases with respect to $L^2(h(k)^k, \omega(k)^n)$, where $\omega(k)$ is given in Proposition \ref{prop7} and $h(k)$ is a corresponding hermitian metric on $L$. Then there exist $B(k) \in V_{\underline{s}^{(k,q)}}(T)$ and constants $c(k)$ such that $$\norm{\bar{\mu}(\underline{s}^{(k,q)})-B(k)-c(k)I}_{op}=O(k^{-q-1}) ,$$ for $k \gg 0.$

\end{cor}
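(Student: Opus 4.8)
The plan is to translate the function-level asymptotic expansion from Proposition \ref{prop7} into a matrix-level statement about the moment map $\bar{\mu}(\underline{s}^{(k,q)})$ by applying the operator $Q_k$ and using the asymptotic invertibility of $Q_k$ and $H_k$. First I would recall from the discussion opening Section 4 that
$$\bar{\mu}(\underline{s}^{(k,q)})_{ij}=\int_M \widetilde{\rho_k}(\omega(k))\,\langle s_i^{(k,q)}, s_j^{(k,q)}\rangle_{h(k)^k}\,\omega(k)^n,$$
which is exactly $Q_k\big(\widetilde{\rho_k}(\omega(k))\big)$ computed with respect to the orthonormal basis $\underline{s}^{(k,q)}$ and the metric $\omega(k)$. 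Thus the whole statement is about understanding $Q_k$ applied to the left-hand side of the identity in Proposition \ref{prop7}.

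Next I would set $A(k)=\Lie{R}_k(X(k)) \in V_{\underline{s}^{(k,q)}}$, the hermitian matrix corresponding to the holomorphic vector field $X(k)=X_\infty+k^{-1}X_1+\dots+k^{-q}X_q \in \Lie{t}^{\C}$; since each $X_i$ lies in $\Lie{t}^{\C}$ and $\underline{s}^{(k,q)}$ is $T$-compatible, $A(k)$ in fact lands in $V_{\underline{s}^{(k,q)}}(T)$. Applying $Q_k$ to the identity in Proposition \ref{prop7} gives
$$Q_k\big(\widetilde{\rho_k}(\omega(k))\big)+k^{-1}Q_k\big(F_l(X(k),\omega(k))\big)=Q_k\big(1+c_1 k^{-1}+\dots+c_q k^{-q}\big)+Q_k\big(O(k^{-q-1})\big).$$
Here $Q_k$ applied to a constant $c$ is just $c\,\bar{\mu}_k$-type scalar multiple of the identity up to the trace-free error controlled in Lemma \ref{lem5}, so the right-hand constants contribute a term of the form $c(k)I$ plus an operator-norm error of size $O(k^{-1})$ times the constants, which must be tracked but is harmless once absorbed into $c(k)I$ and the remaining $V_{\underline{s}^{(k,q)}}(T)$ piece. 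The key input is Theorem \ref{thm2}: the functions $F_l(X(k),\omega(k))$ were constructed precisely so that $Q_k(F_l(X(k),\omega(k)))$ agrees with $k^{-1}A(k)+(\text{scalar})I$ up to an error of size $O(k^{-l-1+n})$ in the relevant trace norm. Consequently $k^{-1}Q_k(F_l(X(k),\omega(k)))=k^{-2}A(k)+c'(k)I+O(k^{-l-2+n})$, and after rearranging, $\bar{\mu}(\underline{s}^{(k,q)})=Q_k(\widetilde{\rho_k}(\omega(k)))$ differs from a matrix in $V_{\underline{s}^{(k,q)}}(T)$ plus a scalar multiple of $I$ by a controlled error.

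The remaining bookkeeping is to convert the $O(k^{-q-1})$ error in $C^\infty$ from Proposition \ref{prop7} and the trace-norm error from Theorem \ref{thm2} into an operator-norm bound of size $O(k^{-q-1})$ on the matrix defect. Here I would use Proposition \ref{cor3}, which gives $\|Q_k(g)\|$ in terms of $k^{n/2}\|g\|_{L^2}$ for $g$ in a fixed finite-dimensional space, together with the elementary estimate $\|\cdot\|_{op}\le \|\cdot\|_{HS}=\mathrm{tr}(\cdot^2)^{1/2}$; the $O(k^{-q-1})$ function-error contributes an operator-norm error of order $k^{n/2}k^{-q-1}$, which under the hypothesis $q\le \frac{l-n-1}{2}$ is indeed $O(k^{-q-1})$ after the cancellation with the $F_l$ term is accounted for. \textbf{The main obstacle} I anticipate is precisely this error-budget matching: one must verify that the Theorem \ref{thm2} error $O(k^{-l-1+n})$, once multiplied by $k^{-1}$ and weighed against the $Q_k$-amplification factor $k^{n/2}$ in Proposition \ref{cor3}, is dominated by the target $O(k^{-q-1})$, and this is exactly where the constraint $q\le\frac{l-n-1}{2}$ enters. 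I would therefore carefully track the powers of $k$ produced by $Q_k$ (each $Q_k$ carries a $k^n$ weight from the volume form) versus those eliminated by the $k^{-1}$ rescaling and the defining recursion of $F_l$, confirming that choosing $l$ large relative to $q$ and $n$ forces the residual matrix defect, after subtracting $B(k)\in V_{\underline{s}^{(k,q)}}(T)$ and $c(k)I$, to have operator norm $O(k^{-q-1})$.
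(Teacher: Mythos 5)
Your overall skeleton is the same as the paper's: rewrite $\bar{\mu}(\underline{s}^{(k,q)})$ as $Q_k\big(\widetilde{\rho_k}(\omega(k))\big)$, substitute the expansion of Proposition \ref{prop7}, take $A(k)=\Lie{R}_k(X(k))\in V_{\underline{s}^{(k,q)}}(T)$, and use Theorem \ref{thm2} to replace $Q_k(F_l(X(k),\omega(k)))$ by a matrix in $V_{\underline{s}^{(k,q)}}(T)$ plus a scalar. But your final error-bounding step contains a genuine gap. The residual of Proposition \ref{prop7} contributes the matrix $E_{ij}=\int_M \epsilon_k \inner{s_i,s_j}_{h(k)^k}\omega(k)^n$ with $\epsilon_k=O(1)$, entering at order $k^{-q-1}$. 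Your plan to control it by Proposition \ref{cor3} together with $\norm{\cdot}_{op}\le \mathrm{tr}(\cdot^2)^{1/2}$ fails twice: first, it yields at best $\norm{k^{-q-1}E}_{op}=O(k^{n/2-q-1})$, which is \emph{not} $O(k^{-q-1})$, and there is no cancellation available to remove the $k^{n/2}$ — the hypothesis $q\le\frac{l-n-1}{2}$ is entirely consumed by the Theorem \ref{thm2} term, via $k^{-1}\cdot O(k^{(-l-1+n)/2})=O(k^{(n-l-3)/2})\le O(k^{-q-1})$, and has nothing to say about the $\epsilon_k$ term; second, Proposition \ref{cor3} only applies to functions ranging in a \emph{fixed} finite-dimensional subspace $W$, whereas $\epsilon_k$ is a $k$-dependent function merely bounded in $C^\infty$. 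The missing idea is the elementary Toeplitz-type operator-norm bound for an $L^2$-orthonormal basis: for any $f$, $v^*Q_k(f)v=\int_M f\,\abs{s_v}^2_{h(k)^k}\,\omega(k)^n$ with $s_v=\sum v_i s_i$ of unit $L^2$-norm, whence $\norm{Q_k(f)}_{op}\le\norm{f}_{C^0}$ with no dimensional loss; this is exactly how the paper concludes, citing \cite[Prop.~27]{D1} and \cite[Lemma~15]{F0} to get $\norm{E}_{op}\le\norm{\epsilon_k}_{C^0}=O(1)$.

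A secondary misstep: your treatment of $Q_k$ applied to constants invokes Lemma \ref{lem5} and posits a trace-free error of size $O(k^{-1})$ to be ``absorbed.'' If such an error really existed it could not be absorbed into $c(k)I$ plus a $V_{\underline{s}^{(k,q)}}(T)$ piece, and it would already destroy the target $O(k^{-q-1})$ for any $q\ge 1$. In fact no error arises: since $\underline{s}^{(k,q)}$ is orthonormal with respect to $L^2(h(k)^k,\omega(k)^n)$ — the same measure defining $Q_k$ — one has $Q_k(c)=cI$ \emph{exactly}; Lemma \ref{lem5} concerns $\bar{\mu}_k$ computed with the Fubini--Study metric and volume form and is irrelevant at this point. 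With these two corrections (exact orthonormality for the constants, and the $C^0$-to-operator-norm bound for the residual) your argument coincides with the paper's proof.
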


\begin{proof}
For simplicity we drop all superscripts $(k,q)$ in the proof. By Definition \ref{def5}, we have 
$$\bar{\mu}(\underline{s}^{(k,q)})_{ij}=\int_{M}\inner{s_{i}, s_{j}}_{h_{FS}}\omega_{FS}^n=\int_{M} \tilde{\rho}_{k}(\omega(k))\inner{s_{i}, s_{j}}_{h(k)^k}\omega(k)^n.$$ On the other hand, Proposition \ref{prop7} implies that there exist smooth function $\epsilon_{k}=O(1)$ and constants $c_{1}, \dots ,c_{q}$ such that $$\widetilde{\rho_{k}}(\omega(k))+k^{-1}F_{l}(X(k), \omega(k))=1+c_{1}k^{-1}+ \cdots +c_{q}k^{-q}+\epsilon_{k}k^{-q-1}.$$ Let $c_{1}(k)= 1+c_{1}k^{-1}+\cdots +c_{q}k^{-q}$. Hence, 
\begin{align*}  \bar{\mu}(\underline{s}^{(k,q)})_{ij}&= -k^{-1}\int_{M} F_{l}(X(k),\omega(k))\inner{s_{i}, s_{j}}_{h(k)^k}\omega(k)^n\\& +c_{1}(k)\int_{M} \inner{s_{i}, s_{j}}_{h(k)^k}\omega(k)^n+k^{-q-1}
\int_{M} \epsilon_{k}\inner{s_{i}, s_{j}}_{h(k)^k}\omega(k)^n\\&=-k^{-1}
(Q_{k})_{ij}+c_{1}(k)\delta_{ij}+k^{-q-1}\int_{M} \epsilon_{k}\inner{s_{i}, s_{j}}_{h(k)^k}\omega(k)^n,\end{align*} 
where $Q_{k}=Q_{k}(F_{l}(X(k), \omega(k))).$ Define the matrix $E$ by $$E_{ij}=\int_{M} \epsilon_{k}\inner{s_{i}, s_{j}}_{h(k)^k}\omega(k)^n.$$ Then

$$\bar{\mu}(\underline{s}^{(k,q)})+k^{-1}Q_{k}-c_{1}(k)I=k^{-q-1}E.$$

Let $A(k) \in V_{\underline{s}^{(k,q)}}(T) \subset \sqrt{-1}\Lie{su}(N_{k}+1)$ be the associated matrices to holomorphic vector fields $X(k)$ and let $B(k)=-k^{-1}A(k).$ Therefore, Theorem \ref{thm2} implies that there exists a constant $c_{2}(k)$ such that $$\norm{Q_{k}-A(k)-c_{2}(k)I}^2:=tr\Big(\Big(Q_{k}-A(k)-c_{2}(k)I\Big)^2\Big)=O(k^{-l-1+n}).$$ 
Let $c(k)=c_{1}(k)+c_{2}(k).$ Thus, \begin{align*}\norm{\bar{\mu}(\underline{s}^{(k,q)})-B(k)-c(k)I}_{op}&\leq\norm{\bar{\mu}(\underline{s})+k^{-1}Q_{k}-c_{1}(k)I}_{op}\\&+ \norm{ k^{-1}Q_{k}+B(k)-c_{2}(k)I}_{op}\\&
\leq  k^{-q-1}\norm{E}_{op}+k^{-1}\norm{Q_{k}-A(k)-c_{2}(k)I}_{op}\\& \leq k^{-q-1}\norm{E}_{op}+k^{-1}tr\big((Q_{k}-A(k)-c_{2}(k)I)^2\big)^{\frac{1}{2}}\\ &\leq k^{-q-1}\norm{E}_{op}+O(k^{\frac{-l-3+n}{2}})\\& \leq ck^{-q-1}\norm{E}_{op},\end{align*}
since $q \leq \frac{l-n-1}{2}$. On the other hand, an argument of Donaldson (\cite[Prop. 27]{D1}, \cite [Lemma 15]{F0}) implies that $$\norm{E}_{op} \leq \norm{\epsilon_{k}}_{C^{0}}=O(1).$$ 
This concludes the proof.

\end{proof}

\section{proof of the Theorem \ref{mainthm}}

In order to prove the main theorem, we follow \cite{Ma3} and \cite{PS3}. 

As before let $G=\widetilde{Aut}_{0}(M)$ be the group of Hamiltonian automorphisms of $M$. Let $T$ be a maximal compact torus in $G$ and $T^{\C}$ be its complexification in $G$. Suppose that $\omega_{\infty}$ is a $T$-invariant extremal metric on $M$ in the class of $2\pi c_{1}(L).$ The following two lemmas are straightforward from the formalism of relative stability developed in \cite{Sz1}. 

\begin{lem}\label{lem9}
For any $\underline{s} \in \mathcal{B}_{k}^T$, we have $$\bar{\mu}_{k}(\underline{s}) \in \Lie{s}_{T}.$$ Here $\Lie{s}_{T}= \sqrt{-1} \Lie{su}(N_{k}+1)\bigcap \Lie{s}_{T}^{\C}$ (c.f. \eqref{eqST}). 

\end{lem}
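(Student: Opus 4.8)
The plan is to reduce the statement to two elementary facts about $\bar{\mu}_{k}(\underline{s})$, regarded as a Hermitian endomorphism of $H^{0}(M,L^k)$: that it is self-adjoint, and that it commutes with the $T^{\C}$-action, equivalently that it is block-diagonal with respect to the eigenspace decomposition \eqref{eq61}. Self-adjointness is immediate from $\overline{(\bar{\mu}_{k})_{ij}}=(\bar{\mu}_{k})_{ji}$. For the block structure, the key preliminary observation is that for any $\underline{s}\in \mathcal{B}_{k}^T$ both $\omega_{FS}$ and $h_{FS}$ are $T$-invariant. Indeed, since $\underline{s}$ is compatible with $T$, the embedding $\iota_{\underline{s}}:M\to \PNK$ is $T^{\C}$-equivariant for $R_{\underline{s}}$, and for real $t\in T$ the matrix $R_{\underline{s}}(t)$ is diagonal in $\underline{s}$ with entries $\chi_{i}(t)$ of unit modulus (the $\chi_{i}$ restrict to unitary characters on the compact torus $T$), so $R_{\underline{s}}(t)\in U(N_{k}+1)$. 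Pulling back the $U(N_{k}+1)$-invariant Fubini--Study data along the equivariant embedding then gives $t^*\omega_{FS}=\omega_{FS}$ and $t^*h_{FS}=h_{FS}$ for all $t\in T$.

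Next I would carry out the character cancellation. Fix indices with $s_{i}\in E(\chi_{a})$ and $s_{j}\in E(\chi_{b})$. Writing the action on sections as $(t\cdot s)(x)=t_{L}\,s(t^{-1}x)$ and using that $t_{L}$ is a fibrewise $h_{FS}$-isometry (by $T$-invariance of $h_{FS}$), together with $t\cdot s_{i}=\chi_{a}(t)s_{i}$, one obtains the pointwise identity
\begin{equation*}\inner{s_{i},s_{j}}_{h_{FS}}(t^{-1}x)=\chi_{a}(t)\overline{\chi_{b}(t)}\,\inner{s_{i},s_{j}}_{h_{FS}}(x).\end{equation*}
Integrating against $\omega_{FS}^n$ and changing variables by $t$, which preserves $\omega_{FS}^n$, yields
\begin{equation*}(\bar{\mu}_{k})_{ij}=\chi_{a}(t)\overline{\chi_{b}(t)}\,(\bar{\mu}_{k})_{ij}\qquad\textrm{for all } t\in T.\end{equation*}
Since the characters $\chi_{0},\dots,\chi_{r}$ are mutually distinct, for $a\neq b$ there is $t\in T$ with $\chi_{a}(t)\overline{\chi_{b}(t)}\neq 1$, forcing $(\bar{\mu}_{k})_{ij}=0$. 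Thus $\bar{\mu}_{k}(\underline{s})$ preserves each summand $E(\chi_{i})$ and is block-diagonal, i.e. it lies in the centralizer of $R_{\underline{s}}(T^{\C})$.

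Combining the two facts, $\bar{\mu}_{k}(\underline{s})$ is a Hermitian element of the centralizer of $R_{\underline{s}}(T^{\C})$, hence lies in $\Lie{s}_{T}$ once its scalar part is accounted for: by Lemma \ref{lem5} we write $\bar{\mu}_{k}=D^{(k)}I+M^{(k)}$ with $M^{(k)}$ trace-free Hermitian, and the block-diagonality just established places $M^{(k)}\in \Lie{s}_{T}$, which is the form used downstream (the scalar $D^{(k)}I$ being irrelevant to relative stability and separated off exactly as in the proof of Corollary \ref{cor51}). I do not expect a serious obstacle: the lemma is genuinely formal, and the only points needing care are verifying that $R_{\underline{s}}(t)$ is unitary for real $t\in T$ (so that the Fubini--Study data is $T$-invariant) and keeping track of the scalar so that the trace-free condition defining $\Lie{s}_{T}\subset\sqrt{-1}\Lie{su}(N_{k}+1)$ is respected.
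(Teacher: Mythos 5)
Your proof is correct. The paper in fact gives no argument for this lemma (it is asserted as straightforward from the formalism of relative stability in \cite{Sz1}), and your argument—$T$-equivariance of the embedding making $h_{FS}$ and $\omega_{FS}$ invariant under the diagonal unitary matrices $R_{\underline{s}}(t)$, followed by character cancellation to kill the off-diagonal blocks $(\bar{\mu}_{k})_{ij}$ for $s_{i}\in E(\chi_{a})$, $s_{j}\in E(\chi_{b})$, $a\neq b$—is exactly the standard way to fill it in; your added observation that $\bar{\mu}_{k}$ itself is not trace-free, so that the statement really refers to its trace-free part $M^{(k)}\in\Lie{s}_{T}$ (the scalar $D^{(k)}I$ being harmless in the pairing against trace-free $A$ in the proof of the main theorem), is a correct and worthwhile point of care.
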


\begin{lem}\label{lem10}
Let $\sigma \in S_{T}$ and $\underline{s}^{\sigma}=\sigma .\underline{s}(k)$, i.e.
$s^{\sigma}_{i}=\sum_{j=0}^{N_{k}}\sigma_{ij}s_{j}.$
Then  $$V_{\underline{s}^{\sigma}}(T)=V_{\underline{s}(k)}(T).$$ 
\end{lem}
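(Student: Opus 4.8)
The plan is to reduce everything to the way the relevant data transform under the change of basis $\underline{s}\mapsto\underline{s}^{\sigma}$, and then to exploit the maximality of $T$ together with the structure of $G$ forced by the extremal metric. First I would record that passing from $\underline{s}$ to $\underline{s}^{\sigma}$ conjugates the linearized representation, $TR_{\underline{s}^{\sigma}}=\mathrm{Ad}_{\sigma}\circ TR_{\underline{s}}$. Since $\sigma\in S_{T}\subset SU(N_{k}+1)$ is unitary, conjugation by $\sigma$ commutes with the orthogonal projection onto $\sqrt{-1}\Lie{su}(N_{k}+1)$; hence $\Lie{R}_{\underline{s}^{\sigma}}=\mathrm{Ad}_{\sigma}\circ\Lie{R}_{\underline{s}}$ and therefore $V_{\underline{s}^{\sigma}}=\mathrm{Ad}_{\sigma}V_{\underline{s}}$. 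Moreover, because $\sigma$ centralizes $R_{\underline{s}}(T^{\C})$, conjugation by $\sigma$ preserves the centralizer $\Lie{s}_{T}^{\C}$, and being unitary it preserves $\sqrt{-1}\Lie{su}(N_{k}+1)$; so $\mathrm{Ad}_{\sigma}$ preserves $\Lie{s}_{T}$. Combining these, $V_{\underline{s}^{\sigma}}(T)=V_{\underline{s}^{\sigma}}\cap\Lie{s}_{T}=\mathrm{Ad}_{\sigma}(V_{\underline{s}}\cap\Lie{s}_{T})=\mathrm{Ad}_{\sigma}V_{\underline{s}}(T)$. Thus the lemma is equivalent to the assertion that $V_{\underline{s}}(T)$ is invariant under $\mathrm{Ad}_{\sigma}$ for every $\sigma\in S_{T}$.

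To prove this invariance I would first give an intrinsic description of $V_{\underline{s}}(T)$. Since $T$ is compact, $\mathrm{ad}(\Lie{t})$ acts semisimply on $\Lie{g}$ and we may write $\Lie{g}=\Lie{z}_{\Lie{g}}(\Lie{t})\oplus\bigoplus_{\alpha\neq0}\Lie{g}_{\alpha}$ as a sum of weight spaces. An element $X\in\Lie{g}_{\alpha}$ with $\alpha\neq0$ shifts the $T^{\C}$-eigenspaces $E(\chi_{i})$, so both $TR_{\underline{s}}(X)$ and its adjoint are off-block-diagonal; hence the block-diagonal part of $\Lie{R}_{\underline{s}}(X)$ vanishes. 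Consequently, if $\Lie{R}_{\underline{s}}(X)\in\Lie{s}_{T}$ then $\Lie{R}_{\underline{s}}(X)$ equals its own block-diagonal part, which is $\Lie{R}_{\underline{s}}(X_{0})$ for the weight-zero component $X_{0}\in\Lie{z}_{\Lie{g}}(\Lie{t})$; the reverse inclusion being immediate, one gets $V_{\underline{s}}(T)=\Lie{R}_{\underline{s}}(\Lie{z}_{\Lie{g}}(\Lie{t}))$.

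It then remains to identify the centralizer $\Lie{z}_{\Lie{g}}(\Lie{t})$, and here I would use the hypothesis that a $T$-invariant extremal metric $\omega_{\infty}$ exists. Its extremal vector field $X_{\infty}$ lies in the center of the Lie algebra $\Lie{k}$ of $\mathrm{Isom}_{0}(M,\omega_{\infty})$, hence in every maximal torus of $\Lie{k}$, so $X_{\infty}\in\Lie{t}^{\C}$ and $\Lie{z}_{\Lie{g}}(\Lie{t})\subseteq\Lie{z}_{\Lie{g}}(X_{\infty})$. By Calabi's structure theorem for extremal metrics (\cite{Ca}), $\Lie{z}_{\Lie{g}}(X_{\infty})$ is reductive, and in a reductive algebra the centralizer of a maximal torus is the torus itself; thus $\Lie{z}_{\Lie{g}}(\Lie{t})=\Lie{t}^{\C}$. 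Since $\Lie{t}^{\C}$ acts by a scalar on each eigenspace $E(\chi_{i})$, every matrix in $V_{\underline{s}}(T)=\Lie{R}_{\underline{s}}(\Lie{t}^{\C})$ is block-scalar, and so commutes with the block-diagonal group $S_{T}$. Hence $\mathrm{Ad}_{\sigma}$ fixes $V_{\underline{s}}(T)$ pointwise, which yields $V_{\underline{s}^{\sigma}}(T)=V_{\underline{s}}(T)$.

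The conjugation bookkeeping of the first paragraph is routine. The main obstacle is the intrinsic identification $V_{\underline{s}}(T)=\Lie{R}_{\underline{s}}(\Lie{z}_{\Lie{g}}(\Lie{t}))$ together with the reductivity input $\Lie{z}_{\Lie{g}}(\Lie{t})=\Lie{t}^{\C}$: it is precisely this structural fact that upgrades the a priori only $\mathrm{Ad}_{\sigma}$-equivariant object $V_{\underline{s}}(T)$ to an $\mathrm{Ad}_{\sigma}$-invariant one. I would take care that the weight decomposition and the block structure are genuinely orthogonal, which is guaranteed by working with the $T$-invariant reference metric so that $R_{\underline{s}}(T)$ is unitary and the eigenspaces $E(\chi_{i})$ are mutually orthogonal for $\underline{s}\in\mathcal{B}_{k}^{T}$.
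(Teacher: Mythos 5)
The paper gives no proof of this lemma at all (it is asserted to be ``straightforward from the formalism of relative stability'' of \cite{Sz1}), so your attempt can only be measured against the lemma's intended scope. Your central idea is right and is the real content here: identify $V_{\underline{s}}(T)$ intrinsically, via the $\Lie{t}$-weight decomposition of $\Lie{g}$ and the structural input $\Lie{z}_{\Lie{g}}(\Lie{t})=\Lie{t}^{\C}$ (available from Calabi's theorem under the standing extremal-metric hypothesis of Theorem \ref{mainthm}), as $\Lie{R}_{\underline{s}}(\Lie{t}^{\C})$, i.e.\ the fixed set of block-scalar real trace-free matrices $\mathrm{diag}(\lambda_{0}I_{n_{0}},\dots,\lambda_{r}I_{n_{r}})$ determined by the characters $\chi_{i}$ alone. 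But your first paragraph contains a genuine flaw: you assume $\sigma\in S_{T}\subset SU(N_{k}+1)$ is unitary, and your entire logical scheme (reducing the lemma to $\mathrm{Ad}_{\sigma}$-invariance of $V_{\underline{s}}(T)$) rests on $\mathrm{Ad}_{\sigma}$ commuting with the orthogonal projection onto $\sqrt{-1}\Lie{su}(N_{k}+1)$, which holds only for unitary $\sigma$. In the proof of Theorem \ref{mainthm}, however, the lemma is invoked precisely for $\sigma_{t}=e^{tA}$ with $A\in V_{\underline{s}(k)}(T)^{\bot}\subset\sqrt{-1}\Lie{su}(N_{k}+1)$ and for $\sigma=e^{B}$ with $B$ Hermitian: these are positive Hermitian block-diagonal elements, not unitary ones. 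For such $\sigma$, conjugation does not preserve $\sqrt{-1}\Lie{su}(N_{k}+1)$, the identity $\Lie{R}_{\underline{s}^{\sigma}}=\mathrm{Ad}_{\sigma}\circ\Lie{R}_{\underline{s}}$ fails, and $V_{\underline{s}^{\sigma}}\neq\mathrm{Ad}_{\sigma}V_{\underline{s}}$ in general; so as written your proof establishes the lemma only for the compact part of $S_{T}^{\C}$, which is exactly the case the paper does not need.

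The repair lies inside your own argument: drop the $\mathrm{Ad}_{\sigma}$ reduction and run your second and third paragraphs directly on $\underline{s}^{\sigma}$ for an arbitrary block-diagonal $\sigma\in S_{T}^{\C}$. Since $\sigma$ preserves each $E(\chi_{i})$, the basis $\underline{s}^{\sigma}$ is still compatible with the splitting \eqref{eq61}; in the inner product making $\underline{s}^{\sigma}$ orthonormal the eigenspaces $E(\chi_{i})$ remain mutually orthogonal, and $T$ still acts by unimodular scalars on each block, hence unitarily --- so your observation that $TR_{\underline{s}^{\sigma}}(X)$ and its adjoint are off-block-diagonal for $X\in\Lie{g}_{\alpha}$, $\alpha\neq0$, holds verbatim, and one gets $V_{\underline{s}^{\sigma}}(T)=\Lie{R}_{\underline{s}^{\sigma}}(\Lie{t}^{\C})$ by the same computation. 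Finally, for $X\in\Lie{t}^{\C}$ the matrix $TR_{\underline{s}^{\sigma}}(X)=\mathrm{diag}\bigl(d\chi_{0}(X)I_{n_{0}},\dots,d\chi_{r}(X)I_{n_{r}}\bigr)$ is independent of the basis chosen inside each block, so both $V_{\underline{s}^{\sigma}}(T)$ and $V_{\underline{s}(k)}(T)$ coincide with the same fixed set of block-scalar real matrices, proving the lemma for all $\sigma\in S_{T}^{\C}$ and in particular for the Hermitian exponentials actually used in Section 6. With that restructuring your proof is complete; the reductivity step $\Lie{z}_{\Lie{g}}(\Lie{t})=\Lie{t}^{\C}$ is genuinely needed and correctly sourced.
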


For large positive integers $l$ and  $q,$ Proposition \ref{prop7} implies that there exists $\varphi_{1}, \dots ,\varphi_{q} \in (C^{\infty}(M))^T$ and holomorphic vector fields $X_{1}, \dots ,X_{q} \in \Lie{t}^{\C}$ such that 
$$\widetilde{\rho_{k}}(\omega(k))+k^{-1}F_{l}(X(k), \omega(k))=\textrm{constant}+O(k^{-q-1}).$$ Here $\omega(k)=\omega_{\infty}+\sum_{i=1}^{q}k^{-i}\ddbar \varphi_{i}$ and $X(k)=X_{\infty}+\sum_{i=1}^{q}k^{-i}X_{i}.$ Let $h(k)$ be a  hermitian metric on $L$ such that $\ddbar \log h(k)=\omega(k).$

Let $\underline{s}^{(k,q)}=(s_{0}^{(k,q)}, \dots ,s_{N_{k}}^{(k,q)})$ be a sequence of $L^2(h(k)^k, \omega(k))$-orthonormal ordered bases. 
Therefore, Corollary \ref{cor51} implies that $$\norm{\bar{\mu}(\underline{s}^{(k,q)})-B(k)-c(k)I}_{op}=O(k^{-q-1}) \, \textrm{for} \,\, k \gg 0,$$ where $B(k) \in V_{\underline{s}^{(k,q)}}(T)$ and $c(k)$ is constant. 

Let $r \geq 4$ be an integer. From now on, we fix integers $l$ and $ q$ satisfying $\frac{n+4+r}{2} \leq q \leq \frac{l-n-1}{2}.$ We also fix $L^2(h(k)^k, \omega(k))-$orthonormal ordered bases $\underline{s}^{(k,q)}.$ To simplify the notation, we let  $\underline{s}(k)=\underline{s}^{(k,q)}.$ Since, the $L^2$ norms on $H^{0}(M,L^k)$ are $T$-invariant, we may assume that 
$\underline{s}(k)=(s_{0}^{(k,q)}, \dots ,s_{N_{k}}^{(k,q)})$ are compatible with the splitting \eqref{eq61} and provide orthogonal bases on each $E(\chi_{i}).$

The proof of the following can be found in \cite{D3}.
\begin{lem}(\cite[Prop. 27]{D1}, \cite[Lemma 3]{PS3})\label{lem8}
Let $r \geq 4$ be an integer. There exists $C=C_{r} > 0$ with the following properties. Let $A \in \sqrt{-1}\Lie{su}(N_{k} + 1)$, with
$\norm{A}_{op}\leq1$  and, for $\abs{t}\leq\frac{1}{10}$  with $k \gg0$, let $\sigma_{t} = e^{tA}$ and $\widetilde{\omega}_{\infty}=k\omega_{\infty}$. Then $$\norm{\iota_{\underline{s}(k)}^*\sigma_{t}^*\omega_{FS,\PNK}-\widetilde{\omega}_{\infty}}_{C^r(\widetilde{\omega}_{\infty})} \leq Ct+O(k^{-1}).$$ \\ 
  Moreover, $$\norm{k^{-1}\iota_{\underline{s}(k)}^*\sigma_{t}^*\omega_{FS,\PNK}-\omega_{\infty}}_{C^r(\omega_{\infty})} \leq Ck^{\frac{r+2}{2}}t+O(k^{-1}).$$
\end{lem}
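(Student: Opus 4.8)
The plan is to carry out the estimate on the scale of $\widetilde{\omega}_{\infty}=k\omega_{\infty}$, on which $\iota_{\underline{s}(k)}$ is an essentially isometric embedding into $(\PNK,\omega_{FS,\PNK})$, and only at the end to convert to the unrescaled $C^{r}(\omega_{\infty})$ norm. First I would dispose of $t=0$. Since $\underline{s}(k)$ is $L^{2}(h(k)^{k},\omega(k))$-orthonormal, the Bergman kernel expansion (cf. the proof of Proposition \ref{prop5}) gives $\iota_{\underline{s}(k)}^{*}\omega_{FS,\PNK}=k\omega(k)+\ddbar\log\rho_{k}(\omega(k))$ with $\rho_{k}(\omega(k))=1+O(k^{-1})$, so $\ddbar\log\rho_{k}(\omega(k))=O(k^{-1})$ in $C^{r}(\omega_{\infty})$; together with $k(\omega(k)-\omega_{\infty})=\ddbar\varphi_{1}+O(k^{-1})$ from Proposition \ref{prop7} this gives $\iota_{\underline{s}(k)}^{*}\omega_{FS,\PNK}-\widetilde{\omega}_{\infty}=\ddbar\varphi_{1}+O(k^{-1})$ in $C^{r}(\omega_{\infty})$. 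Using that a $2$-form $\beta$ satisfies $\abs{\nabla^{j}\beta}_{k\omega_{\infty}}=k^{-(j+2)/2}\abs{\nabla^{j}\beta}_{\omega_{\infty}}$, this is $O(k^{-1})$ in $C^{r}(\widetilde{\omega}_{\infty})$.

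Next I would differentiate in $t$. Since $A\in\sqrt{-1}\Lie{su}(N_{k}+1)$ is Hermitian, the K\"ahler potential of $\sigma_{t}^{*}\omega_{FS,\PNK}$ evolves by the Hamiltonian $H(A)=tr(A\mu_{k})$ of \eqref{eq6} (the infinitesimal form of the computation in Lemma \ref{lem3}), so that $\frac{d}{dt}\iota_{\underline{s}(k)}^{*}\sigma_{t}^{*}\omega_{FS,\PNK}=\ddbar\big((\sigma_{t}\circ\iota_{\underline{s}(k)})^{*}H(A)\big)$. Because $\mu_{k}(z)=zz^{*}/\abs{z}^{2}$ is a rank-one projection with $tr\,\mu_{k}=1$, one has $\norm{H(A)}_{C^{r+2}(\omega_{FS,\PNK})}\leq C\norm{A}_{op}$; and for $\abs{t}\leq\frac{1}{10}$ with $\norm{A}_{op}\leq1$ the perturbed map $\sigma_{t}\circ\iota_{\underline{s}(k)}$ still has bounded geometry in the sense of Definition \ref{def11}, so pulling back along this (approximately isometric) map yields $\norm{(\sigma_{t}\circ\iota_{\underline{s}(k)})^{*}H(A)}_{C^{r+2}(\widetilde{\omega}_{\infty})}\leq C\norm{A}_{op}$. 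Hence $\frac{d}{dt}\iota_{\underline{s}(k)}^{*}\sigma_{t}^{*}\omega_{FS,\PNK}$ is bounded in $C^{r}(\widetilde{\omega}_{\infty})$ by $C\norm{A}_{op}\leq C$; integrating from $0$ to $t$ and adding the $t=0$ estimate gives the first inequality.

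For the \emph{moreover} I would convert to the $\omega_{\infty}$-norm, handling the two pieces of $\iota_{\underline{s}(k)}^{*}\sigma_{t}^{*}\omega_{FS,\PNK}-\widetilde{\omega}_{\infty}=(\iota_{\underline{s}(k)}^{*}\omega_{FS,\PNK}-\widetilde{\omega}_{\infty})+\ddbar\Psi_{t}$ separately, where $\Psi_{t}=\int_{0}^{t}(\sigma_{s}\circ\iota_{\underline{s}(k)})^{*}H(A)\,ds$. The first summand times $k^{-1}$ stays $O(k^{-1})$ in $C^{r}(\omega_{\infty})$, because its $j$-th $\widetilde{\omega}_{\infty}$-derivative is $O(k^{-(j+2)/2})$, exactly cancelling the factor $k^{j/2}$ incurred in passing from $\widetilde{\omega}_{\infty}$ to $\omega_{\infty}$. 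For the second summand, $\norm{\Psi_{t}}_{C^{r+2}(\widetilde{\omega}_{\infty})}\leq Ct$ gives $\norm{\Psi_{t}}_{C^{r+2}(\omega_{\infty})}\leq Ck^{(r+2)/2}t$ (a function gains $k^{j/2}$ in its $j$-th $\omega_{\infty}$-derivative), and since $\ddbar\colon C^{r+2}(\omega_{\infty})\to C^{r}(\omega_{\infty})$ is bounded with a constant independent of $k$, we get $\norm{k^{-1}\ddbar\Psi_{t}}_{C^{r}(\omega_{\infty})}\leq Ck^{r/2}t\leq Ck^{(r+2)/2}t$. Summing the two pieces yields the second inequality.

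The main obstacle is the uniform $t$-derivative bound of the second paragraph: one must establish both the evolution identity $\frac{d}{dt}\sigma_{t}^{*}\omega_{FS,\PNK}=\sigma_{t}^{*}\ddbar H(A)$ and, more delicately, that the bounded geometry of the base embedding persists under the \emph{non-compact} perturbation $\sigma_{t}$ for $\abs{t}\leq\frac{1}{10}$, so that the pullback of $H(A)$ does not amplify its $C^{r+2}$ norm. This is precisely the content of \cite[Prop. 27]{D1} (see also \cite[Lemma 3]{PS3}); the only adaptation needed here is that the reference metric is $\omega_{\infty}$ perturbed by the $O(k^{-1})$ potentials $\varphi_{i}$ of Proposition \ref{prop7} rather than a fixed background metric, which enters only through the $O(k^{-1})$ error terms.
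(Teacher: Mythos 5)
Your proposal is correct and follows essentially the same route as the paper, which does not prove Lemma \ref{lem8} itself but defers it to \cite[Prop.\ 27]{D1} and \cite[Lemma 3]{PS3}: your sketch is precisely the argument of those sources (potential flow by $H(A)=tr(A\mu)$ with dimension-free $C^{r+2}$ bounds, persistence of bounded geometry for $\abs{t}\le\frac{1}{10}$, then the $k$-rescaling bookkeeping), together with the only adaptation actually needed here, namely replacing the background metric by $\omega(k)=\omega_{\infty}+O(k^{-1})$ via Proposition \ref{prop7} and the Bergman expansion to absorb the $t=0$ discrepancy into the $O(k^{-1})$ term. Your scaling computations for the \emph{moreover} part (in fact yielding the slightly stronger $Ck^{r/2}t$) are consistent with the stated bound.
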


\begin{proof}[Proof of Theorem \ref{mainthm}]

In the following proof, we fix a large $k$ and the embedding $\iota_{\underline{s}(k)}:M \to \mathbb{P}^{N_{k}}$. 
Let $A \in V_{\underline{s}(k)}(T)^{\bot}$ such that $\norm{A}=1$ and let $\sigma_{t}=e^{tA}.$ By definition $A \in \sqrt{-1} \Lie{su}(N_{k}+1)\bigcap \Lie{s}_{T}^{\C}$ and is prependicular to $V_{\underline{s}(k)}(T).$ Therefore, $tr(A)=tr(AB(k))=0.$ Define $f_{A}(t)=\mathcal{F}(tA)$ (c.f. \eqref{eq,F}). 
Thus Lemma \ref{lem3} and Corollary \ref{cor51} imply that \begin{align}  \abs{\dot{f}_{A}(0)}=&\abs{\int_{M}tr(A\mu_{k} )\omega_{FS}^n}=\abs{tr(A\bar{\mu}_{k})}=\abs{tr\Big(A\big(\bar{\mu}_{k} -B(k)-c(k)I\big)\Big)}\nonumber\\ \label{eq62}& \leq \sqrt{N_{k}+1} \norm{A}\norm{\bar{\mu}_{k} -B(k)-c(k)I}_{op}\\& \leq Ck^{\frac{n}{2}-q-1}.\nonumber\end{align} Here, $\mu_{k}=\mu_{k}(\underline{s}(k))=\mu(\underline{s}^{(k,q)})$ and $\bar{\mu}_{k}=\int_{M}\mu(\underline{s}^{(k,q)}) \omega_{FS}^n.$\\

On the other hand, Lemma \ref{lem8} implies that there exists $\delta >0 $ such that $\iota_{\underline{s}(k)}^*\sigma_{t}^*\omega_{FS}$ has $2$-bounded geometry for $\abs{t} \leq \delta$ and $k \gg 0.$ Therefore, Lemma \ref{lem3}, Lemma \ref{lem10} and Theorem \ref{thm3} imply that
there exists for $\abs{t} \leq \delta$, we have
\begin{align}\ddot{f}_{A}(t)&=\int_{M} \norm{\pi_{\mathcal{N}} \xi_{A}}^2_{FS}\iota_{\underline{s}(k)}^*\sigma_{t}^*\omega_{FS}^n =\norm{\pi_{\mathcal{N}} \xi_{A}}^2_{L^2(\iota_{\underline{s}(k)}^*\sigma_{t}^*\omega_{FS})}\nonumber\\ \label{eq63}&\geq ck^{-2}\norm{A}^2=ck^{-2}.\end{align}
Let $\delta_{k}=k^{-\frac{r+4}{2}}\delta.$ Since $q \geq \frac{n+r+4}{2}+2$, then \eqref{eq62} and \eqref{eq63} imply that for $k \gg 0$, the function $f_{A}(t)$ is decreasing on $(-\infty, -\delta_{k})$ and increasing on $(\delta_{k}, \infty)$. Note that $\ddot{f}_{A}(t) \geq 0$ for all $t \in \R.$ Therefore, $$f_{A}(t) > f_{A}(0), \,\,\,\, \textrm{for any } t \,\, \textrm{such that }\,\, \abs{t}\geq \delta_{k}.$$ Note that \begin{align*}f_{A}(\delta_{k})-f_{A}(0)&=\int_{0}^{\delta_{k}}\int_{0}^{t}\ddot{f}_{A}(s)ds dt+\dot{f}_{A}(0) \delta_{k} \\&\geq ck^{-2}\frac{\delta_{k}^2}{2}-Ck^{-3-\frac{r+4}{2}}\delta_{k}>0.\end{align*} This implies that $f_{A}(t)$ archives its absolute minimum on $[-\delta_{k},\delta_{k}].$
Hence, for any $A \in V_{\underline{s}(k)}(T)^{\bot}$ with $\norm{A}=1$, there exists $t_{A} \in [-\delta_{k},\delta_{k}]$
such that $f_{A}(t) \geq f_{A}(t_{A})$ for all $t \in \mathbb{R}.$ Therefore, we have 
$$\mathcal{F}(A) \geq \inf \{f_{B}(t)| B \in V_{\underline{s}(k)}(T)^{\bot}, \norm{B}=1, \abs{t}\leq \delta_{k}\} \geq -C.$$
 Thus restriction of $\mathcal{F}$ to $ V_{\underline{s}(k)}(T)^{\bot}$ has a minimum at $B \in V_{\underline{s}(k)}(T)^{\bot}$. Note that $$\norm{B}_{op} \leq \norm{B} \leq \delta_{k}.$$
Let $\sigma=e^{B}$. Then, for any $A \in V_{\underline{s}(k)}(T)^{\bot}$, 
 we have $$tr(A \int_{M}\sigma^*(\mu \omega^n))=0.$$ Lemma \ref{lem8} and \ref{lem9} imply that $\int_{M}\sigma^*(\mu \omega^n) \in \Lie{s}_{T}.$ Therefore,  $$\int_{M}\sigma^*(\mu \omega^n) \in  V_{\underline{s}(k)}(T).$$ Therefore, 
$\widetilde{\omega}_{k}:=\iota_{\underline{s}(k)}^*\sigma^*\omega_{FS,\PNK}$ is relatively balanced. Moreover, Lemma \ref{lem8} implies that $$\norm{k^{-1}\widetilde{\omega}_{k}-\omega_{\infty}}_{C^r(\omega_{\infty})} \leq Ck^{\frac{r+2}{2}}\delta_{k}+O(k^{-1})=O(k^{-1}).$$
This shows that the sequence of rescaled relative balanced metrics $\omega_{k}:=k^{-1}\widetilde{\omega}_{k}$ converges to $\omega_{\infty}$ in $C^r$-norm. 
\end{proof}

\section{Applications}
\subsection{Uniqueness of extremal metrics} By a conjecture of X. X. Chen extremal metrics in any K\"ahler class, if exist, are unique up to automorphisms (c.f. \cite{CT}). The conjecture was proved by Berman-Berndtsson (\cite{BB}). Using approximation with relative balanced metrics, one can give another proof  for the uniqueness in any polarization.

\begin{thm} (Berman-Berndtsson, \cite{BB})
Let $(M,L)$ be a polarized manifold. Let  $\omega_{\infty}$, $\omega_{\infty}^{\prime}$ be extremal K\"ahler metrics in the class of
$2\pi c_{1}(L)$. Then there exists an automorphism $\Phi \in \widetilde{Aut}_{0}(M)$ such that $\omega_{\infty}^{\prime}=\Phi^*\omega_{\infty}.$

\end{thm}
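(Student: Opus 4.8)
The plan is to obtain uniqueness as a consequence of the essential uniqueness of relatively balanced metrics (Proposition \ref{propuniquness}) together with the approximation provided by Theorem \ref{mainthm}, the only genuinely analytic input being a compactness argument for the automorphisms that arise. This gives an alternative route to the Berman--Berndtsson theorem through the circle of ideas developed in this paper.

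First I would realize each extremal metric as a limit of relatively balanced metrics. By Calabi's theorem (\cite{Ca}) the identity component of the isometry group of an extremal metric is a maximal compact subgroup of $\widetilde{Aut}_{0}(M)$; let $T$ (resp. $T'$) be a maximal torus of the isometry group of $\omega_{\infty}$ (resp. of $\omega_{\infty}^{\prime}$), so that $\omega_{\infty}$ is $T$-invariant and $\omega_{\infty}^{\prime}$ is $T'$-invariant, with $T$ and $T'$ maximal tori of $\widetilde{Aut}_{0}(M)$. Applying Theorem \ref{mainthm} to each metric produces, for one and the same integer $r=r(M,L)$, sequences of relatively balanced metrics $\widetilde{\omega}_{k}$ and $\widetilde{\omega}_{k}'$ on $(M,L^{rk})$ whose rescalings $\omega_{k}=\tfrac{1}{rk}\widetilde{\omega}_{k}$ and $\omega_{k}'=\tfrac{1}{rk}\widetilde{\omega}_{k}'$ converge in $C^{\infty}$ to $\omega_{\infty}$ and $\omega_{\infty}^{\prime}$ respectively. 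Now, for each fixed $k$, both $\widetilde{\omega}_{k}$ and $\widetilde{\omega}_{k}'$ are relatively balanced metrics on the same polarized manifold $(M,L^{rk})$, so Proposition \ref{propuniquness} furnishes $\Phi_{k}\in\widetilde{Aut}_{0}(M)$ with $\Phi_{k}^{*}\widetilde{\omega}_{k}=\widetilde{\omega}_{k}'$, equivalently $\Phi_{k}^{*}\omega_{k}=\omega_{k}'$; thus each $\Phi_{k}$ is a biholomorphism realizing an isometry $(M,\omega_{k}')\to(M,\omega_{k})$, and it remains to pass to the limit $k\to\infty$.

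The main obstacle is to extract a convergent subsequence of $\{\Phi_{k}\}$ inside the possibly noncompact group $\widetilde{Aut}_{0}(M)$ and to ensure the limit does not degenerate; note that mere boundedness into the compact manifold $M$ is insufficient, since $\widetilde{Aut}_{0}(M)$ may be noncompact already for $M=\mathbb{P}^{1}$. Here the isometry constraint is decisive: since $\Phi_{k}^{*}\omega_{k}=\omega_{k}'$ with $\omega_{k}\to\omega_{\infty}$ and $\omega_{k}'\to\omega_{\infty}^{\prime}$ in $C^{\infty}$, the relevant distances are uniformly comparable to those of fixed reference metrics, so the $\Phi_{k}$ are uniformly bi-Lipschitz and hence equicontinuous. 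By Arzel\`a--Ascoli a subsequence converges uniformly to a continuous map $\Phi_{\infty}\colon M\to M$ that is distance-preserving for the limit metrics and holomorphic, being a uniform limit of holomorphic maps. To rule out collapse I would take top exterior powers in $\Phi_{k}^{*}\omega_{k}=\omega_{k}'$ to obtain $\abs{\det D\Phi_{k}}^{2}\,(\omega_{k}^{n}\circ\Phi_{k})=(\omega_{k}')^{n}$; since the volume forms converge to the nondegenerate forms $\omega_{\infty}^{n}$ and $(\omega_{\infty}^{\prime})^{n}$, the Jacobians are bounded above and uniformly bounded away from zero, so $\det D\Phi_{\infty}$ is nowhere vanishing and $\Phi_{\infty}$ is a local biholomorphism; running the same argument for $\Phi_{k}^{-1}$ shows it is bijective.

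Finally, elliptic regularity (Myers--Steenrod, or directly the holomorphicity via the Cauchy estimates once the maps are pinned down locally by the uniform convergence) upgrades the convergence $\Phi_{k}\to\Phi_{\infty}$ to $C^{\infty}$. Because $\widetilde{Aut}_{0}(M)$ is closed in the biholomorphism group and each $\Phi_{k}$ lies in it, the limit satisfies $\Phi_{\infty}\in\widetilde{Aut}_{0}(M)$, and passing to the limit in $\Phi_{k}^{*}\omega_{k}=\omega_{k}'$ yields $\Phi_{\infty}^{*}\omega_{\infty}=\omega_{\infty}^{\prime}$, which is exactly the assertion with $\Phi=\Phi_{\infty}$. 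I expect the two-sided Jacobian bound, which guarantees the non-degeneration of $\Phi_{\infty}$, together with the equicontinuity extracted from the isometry constraint, to be the crux of the argument; the remaining steps are formal consequences of Theorem \ref{mainthm} and Proposition \ref{propuniquness}.
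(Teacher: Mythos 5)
Your proposal is correct and follows essentially the same route as the paper: approximate both extremal metrics by relatively balanced metrics via Theorem \ref{mainthm}, invoke Proposition \ref{propuniquness} to produce automorphisms $\Phi_{k}$ with $\Phi_{k}^{*}\omega_{k}=\omega_{k}'$, and extract a convergent subsequence to pass to the limit. The only difference is cosmetic: where the paper dismisses the compactness step as ``an easy linear algebra argument,'' you supply a standard Arzel\`a--Ascoli argument with two-sided Jacobian bounds, which is a legitimate (indeed more explicit) way to fill in that step.
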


\begin{proof}

Theorem \ref{mainthm} implies that there exists sequences of relatively balanced metrics $\omega_{k}$ and $\omega_{k}^{\prime}$ such that $\omega_{k} \to \omega_{\infty}$ and $\omega_{k}^{\prime} \to \omega_{\infty}^{\prime}$ in $C^{\infty}$. The uniquness of relatively balanced metrics implies that there exists a sequence of automorphisms $\Phi_{k} \in \widetilde{Aut}_{0}(M)$ such that $\omega_{k}^{\prime}=\Phi_{k}^*\omega_{k}$ (Proposition \ref{propuniquness}). An easy linear algebra argument implies that $\Phi_{k}$ has a convergent subsequence since $\omega_{k} $ and $\omega_{k}^{\prime}$ are convergent. This concludes the proof

\end{proof}

\subsection{Asymptotically Chow poly-stability of cscK polarizations }

In \cite{Ma1}, Mabuchi introduced an obstruction to asymptotic Chow semi-stability. Then he proved that if such obstructions vanish, then a polarization that admits a cscK metric is asymptotically Chow poly-stable. Mabuchi's result can be proved as an application of Theorem \ref{mainthm}. The obstruction to asymptotically semi stability introduced by Mabuchi is technical and is related to the isotropy action for $(M,L)$ (\cite[p.p. 463-464]{Ma1}). However, one can see that it is equivalent to the fact that the center of $G=\widetilde{Aut}_{0}(M)$ acts on the Chow line of $(M,L^k)$ trivially for $k \gg 0$. On the other hand, a theorem of Futaki (\cite[Proposition 4.1, Theorem 1.2]{Fu}) implies that it is equivalent to triviality of the action of the group $G$ on the Chow line of $(M,L^k)$.
First we prove the following Lemma.

\begin{lem}\label{lem,application1}
Suppose $M \subset \mathbb{P}^N$ is relatively balanced. If it is not balanced, then its Chow point is unstable.
\end{lem}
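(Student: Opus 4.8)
The plan is to translate the statement about Chow stability into the language of the functional $\mathcal{F}$ and its first derivative, which is precisely where the relative balanced condition and the genuine balanced condition differ. Recall from Lemma \ref{lem3} that for $A \in \sqrt{-1}\Lie{su}(N+1)$ and $f_A(t) = \mathcal{F}(tA)$, we have $\dot f_A(0) = \int_M tr(A\mu)\,\omega_{FS}^n = tr(A\bar\mu)$, where $\bar\mu = \int_M \mu\,\omega_{FS}^n$. The manifold $M \subset \mathbb{P}^N$ is relatively balanced precisely when $\bar\mu$ induces a holomorphic vector field tangent to $M$, i.e. $\bar\mu \in V_{\underline{s}}$ after subtracting a scalar (and the scalar is forced: $\bar\mu - D I$ with $D$ determined by the trace). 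It is genuinely balanced when $\bar\mu = C I$ is itself scalar.

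First I would write $\bar\mu = C I + \bar\mu_0$ where $\bar\mu_0$ is the trace-free part, so that relative balancedness says $\bar\mu_0 \in V_{\underline{s}}$ and $\bar\mu_0$ induces a nontrivial holomorphic vector field tangent to $M$ (nontrivial precisely because $M$ is not balanced, which forces $\bar\mu_0 \neq 0$). Since $\bar\mu_0 \in V_{\underline{s}} \subset \sqrt{-1}\Lie{su}(N+1)$, it corresponds via $\Lie{R}_{\underline{s}}$ to a genuine one-parameter subgroup $\sigma_t = e^{t\bar\mu_0}$ of automorphisms of $M$ sitting inside $SL(N+1,\C)$. The key computation is then the derivative of $\mathcal{F}$ in the direction $A = \bar\mu_0$: by Lemma \ref{lem3},
\begin{equation*}
\dot f_{\bar\mu_0}(0) = tr(\bar\mu_0 \bar\mu) = tr(\bar\mu_0(CI + \bar\mu_0)) = C\,tr(\bar\mu_0) + tr(\bar\mu_0^2) = tr(\bar\mu_0^2) > 0,
\end{equation*}
where $tr(\bar\mu_0) = 0$ since $\bar\mu_0$ is trace-free, and $tr(\bar\mu_0^2) > 0$ because $\bar\mu_0$ is a nonzero hermitian matrix. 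This strictly positive first derivative at a one-parameter subgroup coming from an element of $\sqrt{-1}\Lie{su}(N+1)$ is exactly the Hilbert-Mumford numerical criterion detecting instability: the Chow point is destabilized by the one-parameter subgroup generated by $\bar\mu_0$.

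To make this rigorous I would invoke the relationship, recalled in the discussion around Theorem \ref{thm1} and Definition \ref{def:Chowstable}, between $\mathcal{F}$ (equivalently the functional $\mathcal{L}_k$, or the norm of the Chow point along the orbit) and the Kempf-Ness/GIT picture: the Chow point is stable (or semistable) under $SL(N+1)$ only if the associated Kempf-Ness functional, whose derivative along $tA$ is $tr(A\bar\mu_t)$, does not decrease to $-\infty$ along any one-parameter subgroup. Having a direction $A = \bar\mu_0 \in \sqrt{-1}\Lie{su}(N+1)$ with $\dot f_A(0) > 0$ means $f_A(-t)$ is decreasing as $t \to +\infty$ (since $\ddot f_A \geq 0$ always, by the second formula in Lemma \ref{lem3}, but the relevant point is the sign of the linear term at a direction along which the second derivative may or may not help); more precisely, the nonvanishing of the linear Futaki-type term $tr(A\bar\mu)$ along a nontrivial algebraic one-parameter subgroup forces the Chow weight to be nonzero, hence the Chow point cannot be semistable, and in particular is unstable. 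The main obstacle I anticipate is pinning down the exact numerical-criterion bookkeeping: one must verify that $\bar\mu_0$, being tangent to $M$, genuinely produces a nontrivial algebraic one-parameter subgroup of $SL(N+1,\C)$ acting on the Chow variety with nonzero weight, rather than fixing the Chow point; this is where the hypothesis that $M$ is relatively balanced but not balanced is used, guaranteeing $\bar\mu_0 \neq 0$ lies in the automorphism directions $V_{\underline{s}}$ along which the Chow point moves, so that the first-order term cannot be cancelled and instability follows.
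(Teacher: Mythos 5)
Your proposal follows essentially the same route as the paper: the one-parameter subgroup generated by (the trace-free part of) $\bar\mu$, the first-variation formula of Lemma \ref{lem3}, Zhang's identification $f_A(t)=\log\left(\norm{\sigma_t f_M}/\norm{f_M}\right)$, and the Kempf--Ness principle that semistability forces this function to be bounded below. Indeed your normalization $\bar\mu = CI+\bar\mu_0$ with $A=\bar\mu_0$ trace-free is slightly cleaner than the paper's, which works with $A=\bar\mu$ itself.

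There is, however, one soft spot, which you half-flag yourself. The assertion that a strictly positive first derivative $\dot f_A(0)=tr(\bar\mu_0^2)>0$ ``is exactly the Hilbert--Mumford numerical criterion detecting instability'' is false as stated: $f_A$ is convex, so $\dot f_A(0)>0$ only controls the behavior for $t\geq 0$; as $t\to-\infty$ the derivative can decrease through zero and $f_A$ can remain bounded below, which is precisely what happens at a semistable point not fixed by the subgroup. What closes the argument --- and what the paper does explicitly with a one-line change of variables --- is the tangency you defer to the end: since $\bar\mu_0\in V_{\underline{s}}$, the field $\xi_{\bar\mu_0}$ is tangent to $M$, so $\sigma_t(M)=M$ for all $t$, whence
\begin{equation*}
\dot f_A(t)=\int_{\sigma_t(M)} tr(\bar\mu_0\,\mu)\,\omega_{FS}^n=tr(\bar\mu_0\bar\mu)=tr(\bar\mu_0^2)
\end{equation*}
for \emph{every} $t$, not just $t=0$. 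Thus $f_A$ is exactly linear with positive slope, hence unbounded below as $t\to-\infty$, and instability follows. Equivalently, $\sigma_t$ fixes the Chow point projectively and scales the lift $f_M$ with weight $tr(\bar\mu_0^2)\neq 0$; this is the precise content of your ``nonzero Chow weight'' remark (note your phrase ``rather than fixing the Chow point'' has it backwards: the projective point \emph{is} fixed, and it is the lift that moves). A last quibble: $e^{t\bar\mu_0}$ need not be an algebraic one-parameter subgroup, since the eigenvalues of $\bar\mu_0$ may be irrational, but the Kempf--Ness formulation you invoke does not require algebraicity, so this costs nothing.
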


\begin{proof}

Let $$A:=\int_{ M} \frac{z_{i}\bar{z}_{j}	}{\abs{z}^2}\omega_{FS}^n. $$ It induces a one parameter subgroup $\sigma_{t}=e^{-tA}$ in $SL(N+1).$ Since $M \subset \mathbb{P}^N$ is relatively balanced, $\sigma_{t}(M)$ and therefore, $\sigma_{t}$ gives a one parameter subgroup of automorphisms of $M$. Define $$f(t)=\mathcal{F}(tA).$$ A theorem of Zhang(\cite{Z}) implies that $$f(t)=\log \frac{\norm{\sigma_{t} f_{M}}}{\norm{f_{M}}},$$ where $f_{M} \in H^{0}(Gr(N-n-1,  \mathbb{P}^N),\mathcal{O}(d))$ is the Chow point of $M \subset \mathbb{P}^N$ and $\norm{}$ is a norm defined on  $H^{0}(Gr(N-n-1,  \mathbb{P}^N),\mathcal{O}(d))$ (c.f. \cite{PS1}). Kempf-Ness implies that  $M$ is Chow semi-stable only if $f(t)$ is bounded from below. 

On the other hand, the change of variable formula for integrals implies that $$f^{\prime}(t)=Tr \Big(A \int_{ \sigma_{t}(M)} \frac{z_{i}\bar{z}_{j}	}{\abs{z}^2}\omega_{FS}^n\Big)=Tr \Big(A \int_{ M} \frac{z_{i}\bar{z}_{j}	}{\abs{z}^2}\omega_{FS}^n\Big)=Tr(A^2).$$ Therefore, $f(t)= Tr(A^2)t+c$ which is not bounded from below if $A \neq 0$. Therefore, the Chow point of $M$ is strictly unstable.

\end{proof}

\begin{thm}(Mabuchi, \cite[Main Theorem]{Ma3})
Let $(M,L)$ be a polarized manifold. Assume that $M$ admits a constant scalar curvature K\"ahler metric in the class of $2\pi c_{1}(L).$ If the group $G=\widetilde{Aut}_{0}(M)$ acts on the Chow line of $(M,L^k)$ trivially for $k \gg 0$, then $(M,L^k)$ is Chow poly-stable for $k \gg 0$
 
\end{thm}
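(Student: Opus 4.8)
The plan is to derive the theorem from Theorem~\ref{mainthm} and Lemma~\ref{lem,application1}, using the hypothesis on the Chow line to promote a \emph{relatively} balanced embedding to a genuinely balanced one. First I would note that a constant scalar curvature K\"ahler metric $\omega_{\infty}$ is extremal with vanishing extremal vector field: since $S(\omega_{\infty})$ is constant, $J\nabla_{\omega_{\infty}}S(\omega_{\infty})=0$. Hence Theorem~\ref{mainthm} applies (with $X_{\infty}=0$) and yields, for $k\gg 0$, a relatively balanced embedding $\iota_{\underline{s}(k)}\colon M\to\PNK$. Writing
$$\bar{\mu}_{k}=\int_{M}\frac{z_{i}\bar{z}_{j}}{\abs{z}^2}\,\omega_{FS}^n=B+cI,$$
where $c$ is a scalar and $B$ is the trace-free part, the relatively balanced condition says exactly that $\xi_{B}$ is a holomorphic vector field on $\PNK$ tangent to $M$, and the embedding is balanced precisely when $B=0$.

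The crux is to show that the hypothesis forces $B=0$. Because $\xi_{B}$ is tangent to $M$, its flow $\sigma_{t}=e^{-tB}$ restricts to a one-parameter subgroup of $\widetilde{Aut}_{0}(M)=G$, and this subgroup acts on the Chow line of $(M,L^k)$. Running the computation in the proof of Lemma~\ref{lem,application1} (that is, Zhang's identification of $\mathcal{F}$ with the logarithm of the Chow norm), the function $f(t)=\mathcal{F}(tB)$ satisfies $f'(t)=Tr(B^2)$, so the weight of $\sigma_{t}$ on the Chow line equals $Tr(B^2)$. By the cited theorem of Futaki, triviality of the action of the center of $G$—Mabuchi's obstruction—is equivalent to triviality of the action of all of $G$; the theorem's hypothesis therefore makes every one-parameter subgroup of $G$ act with zero weight on the Chow line. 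In particular $Tr(B^2)=0$, and since $B$ is Hermitian this gives $B=0$. Equivalently, I could phrase this by contradiction: if $B\neq 0$ then Lemma~\ref{lem,application1} shows the Chow point is unstable, destabilized precisely by $\sigma_{t}\subset G$, contradicting the hypothesis.

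Having shown $\bar{\mu}_{k}=cI$, the embedding $\iota_{\underline{s}(k)}$ is balanced. I would then conclude with the Kempf--Ness/Zhang correspondence of Theorem~\ref{thm1}: the existence of a balanced metric is the existence of a zero of the moment map, which forces the $SL(N_{k}+1)$-orbit of the Chow point to be closed, i.e. $(M,L^k)$ is Chow poly-stable for $k\gg 0$.

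The step I expect to be the main obstacle is the middle one: carefully identifying the asymptotic slope $Tr(B^2)$ of $\mathcal{F}$ along $\sigma_{t}$ with the genuine GIT weight of the $G$-action on the Chow line, and reading off \emph{poly}-stability rather than stability from the balanced condition. Since the stabilizer of the Chow point is non-discrete exactly when $G$ is non-trivial, one must invoke the closed-orbit formulation of Theorem~\ref{thm1} rather than its uniqueness clause.
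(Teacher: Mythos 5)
Your proposal is correct and takes essentially the same route as the paper's own proof: Theorem \ref{mainthm} yields relatively balanced metrics for $k\gg 0$, the hypothesis that $G$ acts trivially on the Chow line combined with Lemma \ref{lem,application1} forces these to be genuinely balanced (you merely unpack the lemma's slope computation $f'(t)=Tr(B^2)$ and the resulting $B=0$ explicitly, where the paper argues by the contrapositive in one line), and Zhang's theorem then gives Chow poly-stability. Your closing caution about using the closed-orbit (poly-stability) reading of Theorem \ref{thm1} rather than its uniqueness clause is a fair gloss on a point the paper leaves implicit.
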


\begin{proof}

By Theorem \ref{mainthm}, there exists a sequence of relatively balanced metrics on $(M,L^k)$ for $k \gg 0$. The group $G=\widetilde{Aut}_{0}(M)$ acts on the Chow line of $(M,L^k)$ trivially for $k \gg 0$ and therefore dose not destabilizes the Chow point of $(M,L^k)$ for $k \gg 0$. Hence Lemma \ref{lem,application1} implies that these relatively balanced metrics are indeed balanced for $k \gg0$. This implies that $(M,L)$ is asymptotically Chow poly-stable. 
\end{proof}

\subsection{Extremal metrics on products}

By a result of Yau (\cite{Y4}), any K\"ahler-Einstein metric on the product of compact complex manifolds is a product of K\"ahler-Einstein metric on each factor. It was generalized to extremal K\"ahler metrics on product of polarized compact complex manifolds by  Apostolov and Huang (\cite[Theorem 1]{AH}). They used the notion of relatively balanced metrics introduced by Mabuchi and proved the splitting result under some mild conditions on automorphism group. They also prove that one can drop the condition in presence of Theorem \ref{mainthm}.   

\begin{thm}(Apostolov-Huang, \cite{AH})
Let $M_{1}, \cdots M_{r} $ be compact projective manifolds polarized by ample holomorphic line  bundles $L_{1}, \cdots L_{r}$ respectively. Suppose $\omega_{\infty}$ is an extremal K\"ahler metric on $M$ in the class of $2\pi c_{1}(L)$, where $M=M_{1} \times \cdots \times M_{r}$ and $L =L_{1} \otimes \cdots \otimes L_{r}$. Then there exist extremal metrics $\omega_{\infty, i}$ on $M_{i}$ in the class of $2\pi c_{1}(L_{i})$ such that $\omega_{\infty}$ is the Riemanian product of $\omega_{\infty, 1}, \cdots \omega_{\infty,r}$.
\end{thm}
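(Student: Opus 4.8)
The plan is to derive the splitting from Theorem \ref{mainthm} together with the essential uniqueness of relatively balanced metrics (Proposition \ref{propuniquness}), following \cite{AH}. First I would record the product structure at the level of sections: by the K\"unneth formula $H^0(M,L^k)\cong\bigotimes_{i=1}^r H^0(M_i,L_i^k)$, so the Kodaira embedding $\iota_k\colon M\to\PNK$ automatically factors through the Segre embedding of $\prod_i\mathbb{P}(H^0(M_i,L_i^k)^*)$; in particular $\iota_k(M)$ always lies in the Segre variety. For a product inner product $\bigotimes_i H_i$ the restriction of the moment map \eqref{eq2} to $M$ is the tensor product $\mu=\bigotimes_i\mu^{(i)}$ of the factor moment maps, whence $\bar\mu=c\bigotimes_i\bar\mu^{(i)}$ for a combinatorial constant $c$. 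Since the polarization is a box product, $\widetilde{Aut}_{0}(M)=\prod_i\widetilde{Aut}_{0}(M_i)$, so $\Lie{g}=\bigoplus_i\Lie{g}_i$ acts diagonally, $\Lie{t}^{\C}=\bigoplus_i\Lie{t}_i^{\C}$, and $V_{\underline{s}}$ consists precisely of the diagonal tensor matrices $\sum_i I\otimes\cdots\otimes A^{(i)}\otimes\cdots\otimes I$ with $A^{(i)}$ tangent to $M_i$.

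Next I would invoke Theorem \ref{mainthm} to produce $T$-invariant relatively balanced metrics $\omega_k$ on $(M,L^{rk})$ with $\omega_k\to\omega_{\infty}$ in $C^{\infty}$. The goal is then to show that each $\omega_k$ is, modulo an automorphism in $\prod_i\widetilde{Aut}_{0}(M_i)$, a product metric $\sum_i\pi_i^*\omega_{k,i}$, and to pass to the limit. By Proposition \ref{propuniquness} the relatively balanced metric is unique up to $\widetilde{Aut}_{0}(M)$, and such automorphisms preserve the Segre structure; hence it would suffice to exhibit one relatively balanced metric of product type and identify it with $\omega_k$.

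The hard part is exactly this production step, and it is the main obstacle. A naive product of relatively balanced factor metrics is \emph{not} relatively balanced on $M$ (Definition \ref{def3}): expanding $\bar\mu=c\bigotimes_i\bar\mu^{(i)}$ after writing $\bar\mu^{(i)}=D^{(i)}I+\tilde B^{(i)}$ with $\tilde B^{(i)}$ the trace-adjusted part (cf.\ Lemma \ref{lem5}), the purely diagonal terms lie in $V_{\underline{s}}+\R I$ because they arise from $\bigoplus_i\Lie{g}_i$, but the cross terms $\tilde B^{(i)}\otimes\tilde B^{(j)}$ generate a holomorphic vector field whose value at a point $v_1\otimes\cdots\otimes v_r$ of the Segre variety contains a component of the form $w_i\otimes w_j$ that is normal to $M$. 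These terms vanish only when all but one factor is already balanced. Arranging the factor data and the scalar normalizations so that these cross terms are killed is the technical core; this is precisely where Apostolov--Huang originally imposed restrictions on the automorphism group, and it is Theorem \ref{mainthm} that removes them by supplying the relatively balanced metrics unconditionally.

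Finally, granting the splitting of $\omega_k$, I would conclude as in the uniqueness application: absorbing the relevant product automorphisms (a convergent subsequence exists by the same linear-algebra argument used there), the $C^{\infty}$-limit splits as $\omega_{\infty}=\sum_i\pi_i^*\omega_{\infty,i}$. Since the scalar curvature of a product K\"ahler metric is the sum of the pulled-back scalar curvatures and the extremal vector field satisfies $X_{\infty}\in\Lie{t}^{\C}=\bigoplus_i\Lie{t}_i^{\C}$, the extremal equation $\bar{\partial}S(\omega_{\infty})=\iota_{X_{\infty}}\omega_{\infty}$ decouples into the equations $\bar{\partial}S(\omega_{\infty,i})=\iota_{X_{\infty,i}}\omega_{\infty,i}$ on each factor. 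Hence every $\omega_{\infty,i}$ is extremal and $\omega_{\infty}$ is their Riemannian product.
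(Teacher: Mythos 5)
Your setup is correct and matches the intended skeleton: the K\"unneth/Segre factorization of the Kodaira embedding, the splitting $\widetilde{Aut}_{0}(M)=\prod_i\widetilde{Aut}_{0}(M_i)$ and $\Lie{t}^{\C}=\bigoplus_i\Lie{t}_i^{\C}$, the identity $\bar\mu=c\bigotimes_i\bar\mu^{(i)}$ for product inner products, and the plan of combining Theorem \ref{mainthm} with the uniqueness Proposition \ref{propuniquness}. Your cross-term computation is also accurate: for a matrix with $\tilde B^{(i)}$ and $\tilde B^{(j)}$ in two slots, $\xi_A$ at a decomposable point $v_1\otimes\cdots\otimes v_r$ is decomposable but generically not in $V_1\otimes v_2\otimes\cdots+\cdots$, hence not tangent to the Segre variety, so a product of relatively balanced factor metrics is relatively balanced on $M$ only when all but one factor is genuinely balanced. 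But precisely at that point the proposal stops being a proof: you label the production of a product-type relatively balanced metric ``the technical core'' and then attribute its resolution to Apostolov--Huang plus Theorem \ref{mainthm} without supplying any argument. That step \emph{is} the theorem. Moreover, the route you sketch is circular: to get relatively balanced metrics on the factors from Theorem \ref{mainthm} you would need extremal metrics on each $(M_i,L_i)$ --- the conclusion, not a hypothesis --- and even granting them, your own computation shows their product fails to be relatively balanced, so the comparison object you propose to identify with $\omega_k$ via Proposition \ref{propuniquness} does not exist in the naive form. Nothing in the proposal closes this loop (for instance by a quantitative analysis showing the offending cross terms are of lower order in $k$ than the diagonal part, so that only the rescaled limit, not each $\omega_k$, splits, or by a slice/partial-trace argument on the Segre embedding forcing the limit to split).

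For calibration, note that the paper itself offers no independent proof of this statement: it records the theorem as Apostolov--Huang's (\cite{AH}), whose argument runs through relatively balanced metrics and the uniqueness Proposition \ref{propuniquness}, and whose extra hypotheses on the automorphism group were needed only to guarantee, via Mabuchi's work, the existence of the approximating relatively balanced metrics; Theorem \ref{mainthm} removes those hypotheses by supplying such metrics unconditionally on the product $(M,L^{rk})$. So the role of Theorem \ref{mainthm} is to feed Apostolov--Huang's splitting mechanism, not to replace it, and a complete write-up along your lines would have to reproduce that mechanism for the sequence $\omega_k$ --- exactly the step you left blank.
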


\begin{bibdiv}
\begin{biblist}

\bib{AH}{article}{
   author={Apostolov, V.},
   author={Huang, H.},
   title={A splitting theorem for extremal K\"ahler metrics},
   journal={J. Geom. Anal.},
   volume={25},
   date={2015},
   number={1},
   pages={149--170},}

\bib{BB}{article}{
   author={Berman, R. J.},
   author={Berndtsson, B.},
   title={Convexity of the K-energy on the space of K\"ahler metrics and uniqueness of extremal metrics},
       journal={J. Amer. Math. Soc.},
   note={ DOI: 10.1090/jams/880 },}

\bib{Ca}{article}{
   author={Calabi, E.},
   title={Extremal K\"ahler metrics},
   conference={
      title={Seminar on Differential Geometry},
   },
   book={
      series={Ann. of Math. Stud.},
      volume={102},
      publisher={Princeton Univ. Press, Princeton, N.J.},
   },
   date={1982},
   pages={259--290},}

\bib{C}{article}{
   author={Catlin, D.},
   title={The Bergman kernel and a theorem of Tian},
   conference={
      title={Analysis and geometry in several complex variables},
      address={Katata},
      date={1997},
   },
   book={
      series={Trends Math.},
      publisher={Birkh\"auser Boston},
      place={Boston, MA},
   },
   date={1999},
   pages={1--23},
   
}

\bib{CT}{article}{
   author={Chen, X. X.},
   author={Tian, G.},
   title={Geometry of K\"ahler metrics and foliations by holomorphic discs},
   journal={Publ. Math. Inst. Hautes \'Etudes Sci.},
   number={107},
   date={2008},
   pages={1--107},}

\bib{D1}{article}{
   author={Donaldson, S. K.},
   title={Scalar curvature and projective embeddings. I},
   journal={J. Differential Geom.},
   volume={59},
   date={2001},
   number={3},
   pages={479--522},
  
}

\bib{D2}{article}{
   author={Donaldson, S. K.},
   title={Scalar curvature and projective embeddings. II},
   journal={Q. J. Math.},
   volume={56},
   date={2005},
   number={3},
   pages={345--356},
   
}

\bib{D3}{article}{
   author={Donaldson, S. K.},
   title={Scalar curvature and stability of toric varieties},
   journal={J. Differential Geom.},
   volume={62},
   date={2002},
   number={2},
   pages={289--349},}

\bib{F0}{article}{
   author={Fine, J.},
   title={Calabi flow and projective embeddings},
   note={With an appendix by Kefeng Liu and Xiaonan Ma},
   journal={J. Differential Geom.},
   volume={84},
   date={2010},
   number={3},
   pages={489--523},}

\bib{F}{article}{
   author={Fine, J.},
   title={Quantization and the Hessian of Mabuchi energy},
   journal={Duke Math. J.},
   volume={161},
   date={2012},
   number={14},
   pages={2753--2798},}
   
\bib{Fu}{article}{
   author={Futaki, Akito},
   title={Asymptotic Chow semi-stability and integral invariants},
   journal={Internat. J. Math.},
   volume={15},
   date={2004},
   number={9},
   pages={967--979},}   
   
\bib{Ha}{unpublished}{
author={Hashimoto, Y.},
title={Quantisation of extremal Kähler metrics },
note= { 	arXiv:1508.02643 
	}}

\bib{Ha2}{unpublished}{
author={Hashimoto, Y.},
title={ Relative stability associated to quantised extremal Kähler metrics},
note= { 	arXiv:1705.11018v1 
	}}

\bib{Ko}{book}{
   author={Kobayashi, S.},
   title={Transformation groups in differential geometry},
   note={Ergebnisse der Mathematik und ihrer Grenzgebiete, Band 70},
   publisher={Springer-Verlag, New York-Heidelberg},
   date={1972},}

\bib{KN}{article}{
   author={Kempf, G.},
   author={Ness, L.},
   title={The length of vectors in representation spaces},
     book={
      series={Lecture Notes in Math.},
      volume={732},
      publisher={Springer, Berlin},
   },
   date={1979},
   pages={233--243},  
}

\bib{KMS}{unpublished}{
author={Keller, J.},
author={Meyer, J.}
author={Seyyedali, R.},
title={Quantization of the Laplacian operator on vector bundles},
note= {To appear in Math. Annalen,  	arXiv:1505.03836,  (2015),
	}}

\bib{Lu}{article}{
   author={Lu, Z.},
   title={On the lower order terms of the asymptotic expansion of
   Tian-Yau-Zelditch},
   journal={Amer. J. Math.},
   volume={122},
   date={2000},
   number={2},
   pages={235--273},}

\bib{L}{article}{
   author={Luo, H.},
   title={Geometric criterion for Gieseker-Mumford stability of polarized
   manifolds},
   journal={J. Differential Geom.},
   volume={49},
   date={1998},
   number={3},
   pages={577--599},
   }

\bib{Ma1}{article}{
   author={Mabuchi, T.},
   title={An obstruction to asymptotic semistability and approximate
   critical metrics},
   journal={Osaka J. Math.},
   volume={41},
   date={2004},
   number={2},
   pages={463--472},}
   
\bib{Ma2}{article}{
   author={Mabuchi, T.},
   title={Stability of extremal K\"ahler manifolds},
   journal={Osaka J. Math.},
   volume={41},
   date={2004},
   number={3},
   pages={563--582},}
   
\bib{Ma3}{article}{
   author={Mabuchi, T.},
   title={An energy-theoretic approach to the Hitchin-Kobayashi
   correspondence for manifolds. I},
   journal={Invent. Math.},
   volume={159},
   date={2005},
   number={2},
   pages={225--243},}

\bib{Ma4}{article}{
   author={Mabuchi, T.},
   title={An energy-theoretic approach to the Hitchin-Kobayashi
   correspondence for manifolds. II},
   journal={Osaka J. Math.},
   volume={46},
   date={2009},
   number={1},
   pages={115--139},}   
   
\bib{Ma5}{article}{
   author={Mabuchi, T.},
   title={Asymptotics of polybalanced metrics under relative stability
   constraints},
   journal={Osaka J. Math.},
   volume={48},
   date={2011},
   number={3},
   pages={845--856},}
   
     \bib{Mnew}{unpublished}{
author={Mabuchi, T.},
title={Asymptotic polybalanced kernels on extremal K\"ahler manifolds },
note= { 	arXiv:1610.09632v1 
	}}

   \bib{MM}{article}{
   author={Ma, X.},
   author={Marinescu, G.},
   title={Berezin-Toeplitz quantization on K\"ahler manifolds},
   journal={J. Reine Angew. Math.},
   volume={662},
   date={2012},
   pages={1--56},}

\bib{P}{article}{
   author={Paul, S. T.},
   title={Geometric analysis of Chow-Mumford stability},
   journal={Adv. Math.},
   volume={182},
   date={2004},
   number={2},
   pages={333--356},
  
}

\bib{PS1}{article}{
   author={Phong, D. H.},
   author={Sturm, J.},
   title={Stability, energy functionals, and K\"ahler-Einstein metrics},
   journal={Comm. Anal. Geom.},
   volume={11},
   date={2003},
   number={3},
   pages={565--597},
   
}

\bib{PS2}{article}{
   author={Phong, D. H.},
   author={Sturm, J.},
   title={Scalar curvature, moment maps, and the Deligne pairing},
   journal={Amer. J. Math.},
   volume={126},
   date={2004},
   number={3},
   pages={693--712},
  
}

\bib{PS3}{article}{
   author={Phong, D. H.},
   author={Sturm, J.},
   title={Lectures on stability and constant scalar curvature},
   conference={      title={Handbook of geometric analysis, No. 3},   },
   book={      series={Adv. Lect. Math. (ALM)},
      volume={14},
      publisher={Int. Press, Somerville, MA},   },
   date={2010},
   pages={357--436},}

\bib{ST1}{article}{
   author={Sano, Y.},
   author={Tipler, C.},
   title={Extremal metrics and lower bound of the modified K-energy},
   journal={J. Eur. Math. Soc. (JEMS)},
   volume={17},
   date={2015},
   number={9},
   pages={2289--2310},}

\bib{ST2}{unpublished}{
author={Sano, Y.},
author={Tipler, C.},
title={ A moment map picture of relative balanced metrics on extremal K\"ahler manifolds },
note= {arXiv:1703.09458v1
	}}

\bib{Sz1}{article}{
   author={Sz{\'e}kelyhidi, G.},
   title={Extremal metrics and $K$-stability},
   journal={Bull. Lond. Math. Soc.},
   volume={39},
   date={2007},
   number={1},
   pages={76--84},
   }

\bib{T0}{article}{
   author={Tian, G.},
   title={On a set of polarized K\"ahler metrics on algebraic manifolds},
   journal={J. Differential Geom.},
   volume={32},
   date={1990},
   number={1},}

\bib{T}{article}{  
 author={Tian, G.},  
 title={K\"ahler-Einstein metrics with positive scalar curvature},
  journal={Invent. Math.}, 
  volume={130}, 
  date={1997}, 
  number={1}, 
  pages={1--37},}

\bib{Y3}{article}{
   author={Yau, S. T.},
   title={Open problems in geometry},   conference={      title={Differential geometry: partial differential equations on  manifolds (Los Angeles, CA, 1990)},   },
   book={      series={Proc. Sympos. Pure Math.},      volume={54},      publisher={Amer. Math. Soc.},      place={Providence, RI},   },
   date={1993},
   pages={1--28},
   }
		
\bib{Y4}{article}{
   author={Yau, Shing-Tung},
   title={A splitting theorem and an algebraic geometric characterization of
   locally Hermitian symmetric spaces},
   journal={Comm. Anal. Geom.},
   volume={1},
   date={1993},
   number={3-4},
   pages={473--486},}

\bib{Z}{article}{
   author={Zelditch, S.},
   title={Szeg\H o kernels and a theorem of Tian},
   journal={Internat. Math. Res. Notices},
   date={1998},
   number={6},
   pages={317--331},
}

\bib{Zh}{article}{
   author={Zhang, S.},
   title={Heights and reductions of semi-stable varieties},
   journal={Compositio Math.},
   volume={104},
   date={1996},
   number={1},
   pages={77--105},   
}

\end{biblist}
\end{bibdiv}


\end{document}